\newtheorem{theorem}{Theorem}[section]
\newtheorem{lemma}[theorem]{Lemma}
\newtheorem{corollary}[theorem]{Corollary}
\newtheorem{proposition}[theorem]{Proposition}
\newcommand{\p}[1]{\noindent {\newline\bf #1.}}
\newcommand{\aut}{\operatorname{Aut}}
\newcommand{\fix}{\operatorname{Fix}}
\newcommand{\im}{\operatorname{im}}
\newcommand{\rk}{\operatorname{rk}}
\renewcommand{\top}{\operatorname{top}}
\newcommand{\graph}{\operatorname{graph}}
\newcommand{\BS}{\operatorname{BS}}
\newcommand{\onto}{\twoheadrightarrow}
\newcounter{alancomments}
\title[Conjugacy Problem for certain HNN-extensions]{The Conjugacy Problem for ascending HNN-extensions of free groups}
\author{Alan D. Logan}%
\date{\today}
\address{Heriot--Watt University\\ Edinburgh, EH14 4AS, UK.}%AMS
\email{Alan.Logan@hw.ac.uk}
\subjclass[2020]{20F10, 20E06, 20E36, 20E45, 20F05, 20F65}
\keywords{Free group, Conjugacy Problem, Endomorphism, Ascending HNN-Extension}
\begin{document}
\maketitle

\begin{abstract}
We give an algorithm to solve the Conjugacy Problem for ascending HNN-extensions of free groups.
To do this, we give algorithms to solve certain problems on dynamics of free group endomorphisms.
\end{abstract}

%%%%%%%%%%%%%%%%%%%%%%%%%%%%%%%%%%%%%%%%%%%%%
%%%%%%%%%%%%%%%%%%%%%%%%%%%%%%%%%%%%%%%%%%%%%
%------------------------------------------------------------------------------------------------------------------------
%------------------------------------------------------------------------------------------------------------------------
%---------------------------------------------------introduction-------------------------------------------------------
%------------------------------------------------------------------------------------------------------------------------
%------------------------------------------------------------------------------------------------------------------------
%%%%%%%%%%%%%%%%%%%%%%%%%%%%%%%%%%%%%%%%%%%%%
%%%%%%%%%%%%%%%%%%%%%%%%%%%%%%%%%%%%%%%%%%%%%

\section{Introduction}
The Conjugacy Problem for a group $G=\langle \mathbf{x}\mid\mathbf{r}\rangle$ is the problem of determining whether two words $u, v$ over $\mathbf{x}^{\pm1}$ define conjugate elements of $G$, written $u\sim v$. It is one of Dehn's three fundamental problems for groups defined in terms of generators and relators, along with the Word and Isomorphism Problems, and is known to be undecidable in general \cite{Miller1992Decision}. Indeed, Wei{\ss} proved in his PhD thesis that this problem is undecidable in general for finitely presented HNN-extensions of finitely generated free groups \cite[Theorem 5.8]{weiss2015complexity}.

Let $F$ be a free group, and $\phi: F\to F$ an injective group endomorphism.
The \emph{ascending HNN-extension of $F$ induced by $\phi$} is the group with relative presentation
\[
F\ast_{\phi}:=\langle F, t\mid t^{-1}xt=\phi(x),\:x\in F\rangle.
\]

Our main theorem is as follows:

\begin{theorem}[Theorem \ref{thm:CPBODY}]
\label{thm:CP}
The Conjugacy Problem is decidable for ascending HNN-extensions of finitely generated free groups.
\end{theorem}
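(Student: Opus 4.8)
The plan is to exploit the retraction $\pi\colon F\ast_{\phi}\to\mathbb{Z}$ sending $t\mapsto 1$ and $F\mapsto 0$. Since $\pi(g)$ is a conjugacy invariant, I would first compute $\pi(g)$ and $\pi(h)$; if they differ then $g\not\sim h$, and otherwise set $n:=\pi(g)=\pi(h)$ and treat the cases $n\neq 0$ and $n=0$ separately. Write $G=F\ast_{\phi}$. Because $t^{-1}Ft=\phi(F)\leq F$, the subgroups $t^kFt^{-k}$ form an ascending chain with union $\ker\pi=\bigcup_{k\geq 0}t^kFt^{-k}$, so every element of $\ker\pi$ is conjugate, by a computable power of $t$, into $F$. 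If $\pi(g)=n>0$ then $gt^{-n}\in\ker\pi$, hence $g=t^kft^{n-k}$ for computable $k\geq 0$, $f\in F$, and conjugating by $t^k$ shows $g\sim ft^n$; the case $n<0$ reduces to $n>0$ by passing to inverses. Thus the problem reduces to two decision questions about $\phi$ acting on $F$: given $u,v\in F$, decide (for $n>0$) whether $ut^n\sim vt^n$ in $G$, and (for $n=0$) whether $u\sim v$ in $G$.

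I would then pin down the two conjugacy criteria by analysing a hypothetical conjugator with Britton's Lemma, as in the classical Collins' Lemma arguments, but keeping track of the asymmetry caused by $\phi$ not being surjective. For $n=0$: two elements $u,v\in F$ are conjugate in $G$ if and only if there exist $i,j\geq 0$ with $\phi^i(u)$ conjugate in $F$ to $\phi^j(v)$, i.e.\ the forward $\phi$-orbits of their $F$-conjugacy classes eventually meet. For $n>0$: $ut^n\sim vt^n$ in $G$ if and only if there exist $i,j\geq 0$ with $\phi^j(v)=w^{-1}\phi^i(u)\,\phi^n(w)$ for some $w\in F$ (that is, $\phi^i(u)$ and $\phi^j(v)$ are $\phi^n$-twisted conjugate in $F$). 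In both cases the content is that, up to conjugacy, the only moves on the $F$-part of $ut^n$ are $F$-(twisted-)conjugacy and $u\mapsto\phi(u)$ (conjugation by $t$), and the equivalence relation these generate is exactly the "forward orbits eventually meet" relation.

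What remains is to decide these orbit and twisted-orbit problems for $\phi$ — the dynamical results promised in the abstract. Since $\phi$ need not be an automorphism, I cannot invoke Bestvina--Handel train tracks directly; instead I would use (relative) train track representatives for injective endomorphisms of free groups, together with bounded-cancellation estimates, to control the growth of $|\phi^m(w)|$ and of the ($\phi^n$-twisted) conjugacy class of $\phi^m(u)$ as $m\to\infty$. The target is a computable bound $N$ such that if $\phi^i(u)$ and $\phi^j(v)$ are ($\phi^n$-twisted) conjugate for some $i,j\geq 0$ then they are for some $i,j\leq N$; granted such a bound, a finite search finishes the algorithm. I expect this last step to be the main obstacle: one must understand the dynamics on the eventual image $\bigcap_m\phi^m(F)$ and how arbitrary elements are pulled into it, separate polynomial from exponential growth, and cope with the impossibility of running $\phi$ backwards — features that make the merely-injective case considerably harder than the free-by-cyclic case solved by Bogopolski--Martino--Maslakova--Ventura.
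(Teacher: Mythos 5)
Your reduction step is correct and is essentially the paper's own: compute the image under the retraction to $\mathbb{Z}$, shift the elements to the form $ut^{-n}$, $vt^{-n}$ with $n\geq 0$, and observe (by writing a conjugator as $t^qxt^{-p}$, $x\in F$) that conjugacy in $F\ast_\phi$ is equivalent to the existence of $p,q\geq 0$ with $\phi^p(u)$ being $\phi^n$-twisted conjugate to $\phi^q(v)$, ordinary conjugacy in $F$ when $n=0$. This matches the proof of Theorem \ref{thm:CPBODY} exactly. The problem is everything after that: the decidability of these orbit and twisted-orbit problems is the actual content of the paper (Sections \ref{sec:Twisted}--\ref{sec:prelim}), and your proposal for it is a statement of intent rather than an argument, and moreover points in a direction that does not obviously work.

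Concretely, two gaps. First, your plan is to extract a computable bound $N$ so that witnesses $(i,j)$, if any, exist with $i,j\leq N$, from train-track representatives and bounded cancellation. No such bound is established, and this is not how decidability is actually obtained even in the automorphism case: Brinkmann's result is an algorithm, not a bound, and the paper's analysis of the non-surjective case shows why a naive bound is doubtful --- large exponents are governed by elements carried by the stable iterate $\hat\Lambda_k$, where one has relations $\phi^{t}(c)\sim c^{d}$ with $d\geq 2$, and the existence question there reduces to an exponential Diophantine condition of the form $\gamma\alpha^a=\delta\beta^b$ (Lemma \ref{lem:large-p-q}), while the complementary regime reduces to honest automorphism dynamics on periodic vertex roses, where one must invoke Brinkmann's conjugacy algorithm and a Feighn--Handel periodicity test (Lemma \ref{lem:p-equals-0}). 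Your sketch would essentially have to reprove this machinery (or Brinkmann's theorem together with its endomorphism generalisation), and it does not. Second, even granting such a bound $N$, in the case $n>0$ you still must decide, for each fixed pair $(i,j)$, whether $\phi^i(u)$ and $\phi^j(v)$ are $\phi^n$-twisted conjugate in $F$; that is the Twisted Conjugacy Problem for free group endomorphisms, itself a nontrivial theorem, which the paper obtains from Mutanguha's algorithm computing $\fix$ of an endomorphism (Corollary \ref{corol:fixedAlg} and Lemma \ref{lem:TwCP}) and which you never supply. Incidentally, for $n>0$ no orbit search is needed at all: since $[w]_{\phi^n}=[\phi^n(w)]_{\phi^n}$, only the residues of $p,q$ modulo $n$ matter (Lemma \ref{lem:TwCC}, Proposition \ref{prop:ExtendedTwCP}), so that case collapses to finitely many twisted-conjugacy checks; the genuinely hard case is $n=0$ with $\phi$ non-surjective, precisely where your proposal has no proof.
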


\p{\boldmath{$3$}-manifold groups}
Feighn and Handel drew parallels between ascending HNN-extensions of free groups and mapping tori of surfaces (i.e. fibered $3$-manifolds) \cite{Feighn1999Mapping}.
This view extends the parallels between mapping class groups and outer automorphism groups $\operatorname{Out}(F)$ of free groups \cite{Vogtmann2015geometry} to additionally incorporate injective, non-surjective maps (these only occur in the free group setting, as surface groups are co-Hopfian), and highlights similarities between these HNN-extensions and $3$-manifold groups.
Indeed, just like $3$-manifold groups, ascending HNN-extensions of free groups are coherent \cite{Feighn1999Mapping} and residually finite \cite{Borisov2005polynomial}, and they are hyperbolic if and only if they have no Baumslag--Solitar subgroups \cite{Mutanguha2021dynamics}.
In addition, they admit an analogue of the Thurston norm \cite{Funke2018Alexander}.
The Conjugacy Problem is known to be decidable for $3$-manifold groups \cite[Theorem 4.10]{Aschenbrenner2015Decision}, so Theorem \ref{thm:CP} adds to the list of properties shared by ascending HNN-extensions of free groups and $3$-manifold groups.

\p{One-relator groups}
A one-relator group is a group with presentation $\langle \mathbf{x}\mid R\rangle$ for some word $R\in F(\mathbf{x})$.
These groups are one of the cornerstones of Geometric and Combinatorial Group Theory; see for example the recent work of Louder and Wilton \cite{Louder2022negative}.
HNN-extensions underlie the theory of one-relator groups \cite{McCool1973One}, and ascending HNN-extensions of free groups are of particular importance in this theory.
For example, the first example of a non-linear residually finite one-relator group, due to Dru\c{t}u and Sapir, is an ascending HNN-extension of a free group \cite{Drutu2005Nonlinear}.
Their importance is further highlighted by genericity results:
Almost all $n$-generator $1$-relator groups with $n\geq3$ are subgroups of one-relator groups which are ascending HNN-extensions of free groups \cite{Sapir2011AlmostAll} (the fact that the ascending HNN-extensions are one-relator groups follows from the last line of the proof of their Theorem 2.5).
In the two-generator case, a non-zero proportion of all two-generator one-relator groups are ascending HNN-extensions of free groups with $\phi$ an automorphism \cite{Dunfield2006random}, and computer experimentation suggests that around 99\% of one-relator groups are subgroups of ascending HNN-extensions of free groups \cite[Footnote on p3]{Borisov2005polynomial}.

In the 1930s, Magnus gave a solution to the Word Problem for one-relator groups \cite{Magnus1932wordproblem}, but the Conjugacy Problem remains open \cite[Problem 3.34]{khukhro2014unsolved} \cite[Question O5]{Baumslag2002Open} \cite[Question OR7]{Fine2001Open}.
Much of the progress on the Conjugacy Problem here has centered on the shape of the words, such as being a proper power \cite{Newman1968some}, being cyclically or conjugacy pinched \cite{Larsen1977conjugacy}, or satisfying Pride's unique max-min condition \cite{Pride2008residual}.
Theorem \ref{thm:CP} builds on this theme, as a result of Brown uses the shape of the relator $R$ to classify when a one-relator group is an ascending HNN-extension of a free group \cite[Section 4]{Brown1987trees}.

Theorem \ref{thm:CP} applies to one-relator groups which are beyond the reach of the previous results cited above.
The aforementioned non-linear residually finite one-relator group is an example, as we explain in Section \ref{sec:oneRel}.

\p{Dynamics of endomorphisms}
An ascending HNN-extension $F\ast_{\phi}$ of a free group is \emph{free-by-cyclic} if the endomorphism $\phi$ is additionally an automorphism (the class of groups often referred to as ``free-by-cyclic'' additionally includes some virtually-free groups; we ignore these so as to simplify our exposition).
The Conjugacy Problem is known to be decidable for free-by-cyclic groups, and we are aware of two proofs of this.

One proof uses Dehn functions with low complexity:
If an HNN-extension of a free group has Dehn function less than $n^2 \log n$ (with a somewhat technical definition of ``less than'') then it has decidable Conjugacy Problem \cite[Theorem 2.5]{olshanskii2006SmallDehn}. Free-by-cyclic groups have quadratic Dehn function \cite{Bridson2010isoperimetric}, and so have decidable Conjugacy Problem.
However, this proof is not adaptable to the current setting as, for example, $\BS(1, 2)=\langle a, t\mid t^{-1}at=a^2\rangle $ is an ascending HNN-extension of a (cyclic) free group, and has exponential Dehn function \cite{Groves2001Isoperimetric}.

The other proof uses dynamics of free group automorphisms \cite{Bogopolski2006conjugacy}, and is built around two key algorithms:
An algorithm of Bogopolski and Maslakova which computes a basis of the ``fixed subgroup'' of an automorphism \cite{Bogopolski2016algorithm}, and an algorithm of Brinkmann, which we refer to as \emph{Brinkmann's conjugacy algorithm} and which determines for elements $u, v\in F$ and an automorphism $\phi\in\aut(F)$ if there exists some exponent $p\in\mathbb{Z}$ such that $\phi^p(u)\sim v$ \cite[Theorem 6.2]{Brinkmann2010Detecting}.
Both algorithms apply train-track maps.

The proof of Theorem \ref{thm:CP}
follows roughly the same route as this dynamics proof but for injective, non-surjective endomorphisms.
Indeed, the proof also relies on two key algorithms:
An algorithm of Mutanguha which generalises the algorithm of Bogopolski and Maslakova to endomorphisms, while our key technical result is an appropriate analogue of Brinkmann's conjugacy algorithm for injective endomorphisms; see Proposition \ref{prop:PowSuperTwCP}.
Both algorithms apply automorphic expansions,
which replace train-track maps as the central topological objects in the theory of non-surjective endomorphisms of free groups.

With a bit of effort, these two algorithms give a general result on the dynamics of injective endomorphisms (Proposition \ref{prop:UltraTwCPINJECTIVE}), from which Theorem \ref{thm:CP} follows quickly.
Proposition \ref{prop:UltraTwCPINJECTIVE} generalises easily to all endomorphisms, so we state this version now although it is beyond what is required for Theorem \ref{thm:CP}.
Two elements $u, v\in F$ are \emph{$\varphi$-twisted conjugate} if there exists some $x\in F$ such that $u=x^{-1}v\varphi(x)$.
We write $\mathbb{N}$ for the non-zero natural numbers, and $\mathbb{N}_0$ for $\mathbb{N}\cup\{0\}$.

The following is completely general, and supersedes the analogous result implicit in \cite{Bogopolski2006conjugacy} for automorphisms (for the automorphism case, $p$ may be assumed to be $0$).

\begin{theorem}[Theorem \ref{thm:UltraTwCPBODY}]
\label{thm:UltraTwCP}
There exists an algorithm with input an endomorphism $\phi$ of a free group $F$, a natural number $n\in\mathbb{N}_0$ and two elements $u, v\in F$, and output either a pair $(p, q)\in \mathbb{N}_0\times\mathbb{N}_0$
%with $p\geq q$
such that $\phi^p(u)$ is $\phi^n$-twisted conjugate to $\phi^q(v)$, or $\mathrm{no}$ if no such pair exists.
\end{theorem}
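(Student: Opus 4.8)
The plan is to reduce to the injective case of Proposition~\ref{prop:UltraTwCPINJECTIVE} by passing to the stable image of $\phi$. Take $F$ free on a finite basis $\mathbf{x}$, with $\phi$ given by the words $\phi(\mathbf{x})$. First I would note that the ranks $\rk(\phi^k(F))$ form a non-increasing sequence of non-negative integers, and that as soon as $\rk(\phi^{k+1}(F))=\rk(\phi^k(F))$ the epimorphism $\phi\colon\phi^k(F)\onto\phi^{k+1}(F)$ of finitely generated free groups of equal rank is an isomorphism by Hopficity --- so the sequence is constant from its first repetition onward. Computing the subgroups $\phi^k(F)=\langle\phi^k(\mathbf{x})\rangle$ via Stallings foldings, I can effectively find the least $N$ with $\rk(\phi^{N+1}(F))=\rk(\phi^N(F))$ and compute a basis of $F':=\phi^N(F)$. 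Then $\phi(F')=\phi^{N+1}(F)\subseteq F'$ and $\psi:=\phi|_{F'}\colon F'\to F'$ is an injective endomorphism with $\phi^k|_{F'}=\psi^k$ for all $k\ge 0$; I can express $\psi$, and the elements $u_0:=\phi^N(u)$ and $v_0:=\phi^N(v)$ of $F'$, in the computed basis.

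Next I would establish that a pair $(p,q)\in\mathbb{N}_0^2$ with $\phi^p(u)$ $\phi^n$-twisted conjugate to $\phi^q(v)$ exists if and only if a pair $(p',q')\in\mathbb{N}_0^2$ with $\psi^{p'}(u_0)$ $\psi^n$-twisted conjugate to $\psi^{q'}(v_0)$ inside $F'$ exists. For the ``if'' direction, from $\psi^{p'}(u_0)=x_0^{-1}\psi^{q'}(v_0)\psi^n(x_0)$ with $x_0\in F'$, and using $\psi^{p'}(u_0)=\phi^{p'+N}(u)$, $\psi^{q'}(v_0)=\phi^{q'+N}(v)$ and $\psi^n(x_0)=\phi^n(x_0)$, the pair $(p'+N,q'+N)$ works for $\phi$; this is what the algorithm outputs. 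The point needing a little care is the ``only if'' direction, since the conjugator $x$ in $\phi^p(u)=x^{-1}\phi^q(v)\phi^n(x)$ ranges over all of $F$, not over $F'$. I would handle this by applying $\phi^N$ to that identity, obtaining $\phi^{p+N}(u)=\phi^N(x)^{-1}\phi^{q+N}(v)\phi^n(\phi^N(x))$, where now $\phi^N(x)\in\phi^N(F)=F'$ and every other term lies in $F'$, so $(p,q)$ itself witnesses $\psi^p(u_0)$ being $\psi^n$-twisted conjugate to $\psi^q(v_0)$.

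The algorithm is then: compute $N$, $F'$, $\psi$, $u_0$, $v_0$; run the algorithm of Proposition~\ref{prop:UltraTwCPINJECTIVE} on input $(\psi,n,u_0,v_0)$; output $(p'+N,q'+N)$ if it returns $(p',q')$, and $\mathrm{no}$ if it returns $\mathrm{no}$. I expect the main obstacle in this particular reduction to be exactly the ``conjugator escapes $F'$'' issue just mentioned --- the $\phi^N$-trick resolves it --- while the substantive difficulty (the analogue of Brinkmann's conjugacy algorithm, Proposition~\ref{prop:PowSuperTwCP}, and the injective Proposition~\ref{prop:UltraTwCPINJECTIVE} built from it) lies upstream of this step.
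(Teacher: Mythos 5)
Your reduction is correct, but it takes a genuinely different route from the paper's. The paper never passes to the image of $\phi$: for $n>0$ it invokes Proposition \ref{prop:ExtendedTwCP} directly (that argument needs no injectivity, resting only on the twisted-conjugacy algorithm of Lemma \ref{lem:TwCP}, which works for arbitrary endomorphisms), and for $n=0$ it reduces to the injective case via Lemma \ref{lem:inj-to-non-inj}: Nielsen reduction produces a free factor $F'$ of $F$ with a retraction $\pi\colon F\to F'$ such that $\overline{\phi}=\pi\phi|_{F'}$ is injective and $\overline{\phi}^k\pi=\pi\phi^k$, and conjugacy is transferred back from $F'$ to $F$ using malnormality of free factors (Proposition \ref{prop:BCA2}). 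You instead replace $F$ by the eventually-injective image $F'=\phi^N(F)$ --- note that, despite your opening phrase ``stable image'', this is not $\phi^{\infty}(F)=\cap_i\phi^i(F)$, and it is the right choice, since $\phi^N(u),\phi^N(v)$ need not lie in $\phi^{\infty}(F)$. Your individual steps are sound: Hopficity gives injectivity of $\phi|_{F'}$ as soon as the rank sequence repeats, which happens within $\rk(F)$ steps and is detectable by Stallings foldings, so $N$, $F'$, $\psi$, $u_0$, $v_0$ are computable; the forward transfer is exactly the ``apply $\phi^N$ to the witness identity'' trick, which correctly forces the conjugator into $F'$, and the backward transfer reads a witness identity in $F'$ verbatim as one in $F$, shifting $(p',q')$ to $(p'+N,q'+N)$. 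Your route is uniform in $n$ and avoids Nielsen reduction, retractions and malnormality, at the cost of replacing $u,v$ by $\phi^N(u),\phi^N(v)$ --- harmless for this theorem, though the paper's free-factor-plus-retraction framework is what also powers the statements involving kernel elements (Theorems \ref{thm:BCA1} and \ref{thm:BEA1}), which your image trick would not directly recover. As you say, in both proofs the substantive content lies upstream, in Propositions \ref{prop:PowSuperTwCP} and \ref{prop:UltraTwCPINJECTIVE}.
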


We prove Theorem \ref{thm:UltraTwCP} in Section \ref{sec:MainDynamicsProofs}, which is a brief interlude in our proof of Theorem \ref{thm:CP}.
This section extends results of the previous sections to non-injective endomorphisms, and develops them to deal with equality rather than ($\phi^n$-twisted) conjugacy.
For example, we prove the following result on equality.
This generalises a second algorithm of Brinkmann, which we refer to as \emph{Brinkmann's equality algorithm} \cite[Theorem 6.3]{Brinkmann2010Detecting}.

\begin{theorem}[Theorem \ref{thm:equalityBODY}]
\label{thm:equality}
There exists an algorithm with input an endomorphism $\phi$ of a free group $F$ and two elements $u, v\in F$, and output either a pair $(p, q)\in \mathbb{N}_0\times\mathbb{N}_0$
% with $p\geq q$
such that $\phi^p(u)=\phi^q(v)$, or $\mathrm{no}$ if no such pair exists.
\end{theorem}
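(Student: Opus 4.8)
The plan is to reduce this two-sided iterated equality problem to the two-sided iterated conjugacy problem — that is, to the $n=0$ case of Theorem~\ref{thm:UltraTwCP} — by adjoining an extra infinite cyclic free factor. Set $F':=F\ast\langle z\rangle$ with $z$ a new generator, and let $\psi\in\emo(F')$ be the extension of $\phi$ determined by $\psi|_F=\phi$ and $\psi(z)=z$. Then $F'$ is again a finitely generated free group, and an easy induction gives $\psi^k(wz)=\phi^k(w)z$ for all $w\in F$ and all $k\in\mathbb{N}_0$.

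The reduction rests on one elementary fact: for $a,b\in F$ we have $az\sim_{F'}bz$ if and only if $a=b$. The ``if'' direction is trivial. For ``only if'', write elements of $F'$ in syllable normal form with respect to the free factors $F$ and $\langle z\rangle$. When $a\neq 1$ the word $az$ is cyclically reduced of syllable length $2$, with syllables $a$ and $z$; by the conjugacy criterion for free products, any cyclically reduced conjugate of $az$ is a cyclic permutation of its syllables, and of the two candidates $az$ and $za$ only $az$ lies in $F\cdot z$ (an equality $za=bz$ with $b\in F$ would force $b\in F\cap\langle z\rangle=\{1\}$). Hence $bz=az$, so $a=b$. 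The degenerate case $a=1$, and symmetrically $b=1$, is checked directly: $z$ is cyclically reduced of syllable length $1$ and so is not conjugate to $bz$ when $b\neq1$, the latter being cyclically reduced of syllable length $2$; thus $a=1$ forces $b=1$.

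Given this, the algorithm is immediate. On input $\phi,u,v$ form $\psi,uz,vz$ as above and run the algorithm of Theorem~\ref{thm:UltraTwCP} on the tuple $(\psi,0,uz,vz)$. If it returns a pair $(p,q)$, then $\psi^p(uz)$ is $\mathrm{id}$-twisted — that is, ordinarily — conjugate to $\psi^q(vz)$, i.e.\ $\phi^p(u)z\sim_{F'}\phi^q(v)z$, so $\phi^p(u)=\phi^q(v)$ by the fact above; return $(p,q)$. If it returns $\mathrm{no}$, return $\mathrm{no}$: any pair with $\phi^p(u)=\phi^q(v)$ would give $\phi^p(u)z=\phi^q(v)z$ and hence $\psi^p(uz)\sim_{F'}\psi^q(vz)$, contradicting the output. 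There is no real obstacle once Theorem~\ref{thm:UltraTwCP} is available; the only care needed is in the degenerate cases of the free-product conjugacy fact — ensuring a syllable-length-one word is not conjugate to a genuinely longer one — and in noting that the witnessing pair produced by Theorem~\ref{thm:UltraTwCP} transfers verbatim to the equality problem.
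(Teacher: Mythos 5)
Your proof is correct and is essentially the paper's own argument: the paper uses the same Feighn--Handel trick, adjoining a new generator $a$ fixed by the extended endomorphism, observing that $wa$ is cyclically reduced so equality becomes conjugacy of $\varphi^p(ua)$ and $\varphi^q(va)$, and then invoking Proposition~\ref{prop:BCA2}, which is exactly the $n=0$ case of Theorem~\ref{thm:UltraTwCP} that you cite. Your only addition is to spell out the free-product conjugacy fact that the paper leaves implicit.
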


\p{Outline of the paper}
In Section \ref{sec:Twisted} we give an algorithm to solve the Twisted Conjugacy Problem for free groups, and we prove Theorem \ref{thm:UltraTwCP} for $n>1$ (Proposition \ref{prop:ExtendedTwCP}).
Sections \ref{sec:iterates} and \ref{sec:prelim} are the technical parts of the paper, proving two generalisations of Brinkmann's conjugacy algorithm for $\phi$ injective.
%They culminate in the injective version of Theorem \ref{thm:UltraTwCP} for $n=1$ (Proposition \ref{prop:PowSuperTwCP}).
In Section \ref{sec:MainDynamicsProofs} we extend the aforementioned generalisations of Brinkmann's conjugacy algorithm from injective to non-injective maps, and we prove Theorems \ref{thm:UltraTwCP} and \ref{thm:equality}.
In Section \ref{sec:MainProofs} we prove Theorem \ref{thm:CP}.
In Section \ref{sec:oneRel} we note that Theorem \ref{thm:CP} applies to the non-linear residually finite one-relator group of Dru\c{t}u and Sapir, but that previous results on the Conjugacy Problem do not apply to this group.

\p{Acknowledgements}
I would like to thank Laura Ciobanu for her support and help throughout this project, and
Jean-Pierre Mutanguha for helpful conversations about automorphic expansions, for his careful reading and comments on Sections \ref{sec:iterates} \& \ref{sec:prelim}, and for explaining his proof of Proposition \ref{prop:JPMremark} to me.
I am also grateful to Andr\'e Carvalho for discussions which led to Section \ref{sec:MainDynamicsProofs},
to Jack Button for pointing out work of Levitt which gives Proposition \ref{prop:Levitt},
to Matt Zaremsky for pointing me to the Rosenberger citation relating to Lemma \ref{lem:primitivity},
and to Tara Brendle, Giles Gardam, Marco Linton and Henry Wilton for sharing their opinions and knowledge on motivations and one-relator groups.
Finally, I am grateful to the anonymous referee for a number of comments which helped improve the paper, as well as for pointing out an issue with the original proof of Lemma \ref{lem:primitivity}.
This research was supported by EPSRC grants EP/R035814/1 and EP/S010963/1.

%%%%%%%%%%%%%%%%%%%%%%%%%%%%%%%%%%%%%%%%%%%%%
%%%%%%%%%%%%%%%%%%%%%%%%%%%%%%%%%%%%%%%%%%%%%
%------------------------------------------------------------------------------------------------------------------------
%------------------------------------------------------------------------------------------------------------------------
%---------------------------------------------------Twisted conjugaacy----------------------------------------
%------------------------------------------------------------------------------------------------------------------------
%------------------------------------------------------------------------------------------------------------------------
%%%%%%%%%%%%%%%%%%%%%%%%%%%%%%%%%%%%%%%%%%%%%
%%%%%%%%%%%%%%%%%%%%%%%%%%%%%%%%%%%%%%%%%%%%%

\section{Twisted conjugacy}
\label{sec:Twisted}
In this section we prove Theorem \ref{thm:UltraTwCP} for $n>0$, and to do this we consider a problem of general interest:
The \emph{Twisted Conjugacy Problem} for free groups is the problem of determining for any endomorphism $\phi:F\to F$ and elements $u, v\in F$, if there exists some $x\in F$ such that $u=x^{-1}v\phi(x)$.
This problem has many applications in topology and Nielsen fixed point theory \cite{Jiang2005Primer}.
A solution to the automorphism-Twisted Conjugacy Problem for free groups (i.e. when $\phi:F\to F$ is an automorphism) was the main original step in the paper solving the Conjugacy Problem for free-by-cyclic groups \cite[Theorem 1.5]{Bogopolski2006conjugacy}.

Note that we could replace the identity ``$u=x^{-1}v\phi(x)$'' in the above definition with ``$u=\phi(x)^{-1}vx$''. This yields two distinct definitions of twisted conjugate, yielding a ``Right'' and ``Left'' Twisted Conjugacy Problem. However, as $u=x^{-1}v\phi(x)$ if and only if $u^{-1}=\phi(x)^{-1}v^{-1}x$, these problems are equivalent. We therefore focus only on the identity $u=x^{-1}v\phi(x)$.

We now establish a solution to the Twisted Conjugacy Problem for free groups, based on the following result of Mutanguha.
In Section \ref{sec:prelim} we dig into the proof of this result, and in particular state and apply the theorem from which it follows (Theorem \ref{corol:fixedAlg}).

\begin{corollary}[Mutanguha, \cite{JPM}]
\label{corol:fixedAlg}
There exists an algorithm with input a finitely generated free group $F$ and an endomorphism $\phi:F\to F$ and output a basis for the fixed subgroup $\fix(\phi)$ of $\phi$.
\end{corollary}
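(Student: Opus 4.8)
The plan is to reduce the general case to the case of an injective endomorphism — which is exactly the theorem from which this corollary follows, proved by Mutanguha via automorphic expansions and restated and applied in Section~\ref{sec:prelim} — and then to check that the reduction is effective.

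So let $\phi\colon F\to F$ be an endomorphism of a finitely generated free group. Since $\phi^{n+1}(F)=\phi(\phi^n(F))$ is a homomorphic image of the free group $\phi^n(F)$ and is itself free, being a subgroup of $F$, we have $\rk(\phi^{n+1}(F))\le\rk(\phi^n(F))$; hence the sequence $(\rk(\phi^n(F)))_{n\ge 0}$ is non-increasing, and by the pigeonhole principle there is some $N\le\rk(F)$ with $\rk(\phi^{N}(F))=\rk(\phi^{N+1}(F))$. Using Stallings foldings we compute, for $n=0,1,2,\dots$, a free basis of $\phi^n(F)$ as a set of words over a fixed basis of $F$ together with $\rk(\phi^n(F))$, and so we find such an $N$ explicitly. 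Put $G:=\phi^N(F)$; we have an explicit basis for this finitely generated free group. Because the chain $F\ge\phi(F)\ge\phi^2(F)\ge\cdots$ is descending, $\phi(G)=\phi^{N+1}(F)\subseteq\phi^N(F)=G$, so $\phi$ restricts to an endomorphism $\phi|_G\colon G\to G$.

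The key point is that $\phi|_G$ is injective. It surjects $G=\phi^N(F)$ onto $\phi(G)=\phi^{N+1}(F)$, and these two free groups have the same rank $r:=\rk(G)$; composing $\phi|_G$ with any isomorphism $\phi^{N+1}(F)\xrightarrow{\ \sim\ }G$ yields a surjective endomorphism of $G$, which is an automorphism since finitely generated free groups are Hopfian, and therefore $\phi|_G$ is injective. Moreover, if $x\in\fix(\phi)$ then $x=\phi^N(x)\in\phi^N(F)=G$, so $\fix(\phi)=\fix(\phi|_G)$. It therefore suffices to run the algorithm for injective endomorphisms on the input $(G,\phi|_G)$ and then rewrite the output basis of $\fix(\phi|_G)$ as words over the basis of $F$.

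The substantive input is the injective case, which we treat as a black box here; within the present argument the only thing to verify is the bookkeeping above — that stabilisation of the ranks forces $\phi|_G$ to be injective, that passing from $F$ to $G$ does not change $\fix$, and (so that asking for a basis is meaningful) that $\fix(\phi)=\fix(\phi|_G)$ is finitely generated, which holds because $\phi|_G$ is injective. I expect the main subtlety to be nothing deeper than the Hopficity argument for the injectivity of $\phi|_G$.
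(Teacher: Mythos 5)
Your argument is correct: the ranks $\rk(\phi^n(F))$ are non-increasing and computable via Stallings foldings, the first coincidence $\rk(\phi^N(F))=\rk(\phi^{N+1}(F))$ occurs with $N\le\rk(F)$, Hopficity then forces $\phi|_{G}$ to be injective on $G=\phi^N(F)$, and since every fixed point satisfies $x=\phi^N(x)\in G$ you get $\fix(\phi)=\fix(\phi|_G)$ on the nose, so the general case reduces effectively to the injective case. The paper, however, does not prove this statement at all: Corollary~\ref{corol:fixedAlg} is quoted wholesale from Mutanguha \cite{JPM}, with Theorem~\ref{thm:JPM} (the algorithmic construction of an automorphic expansion for an \emph{injective} endomorphism) identified as the underlying engine; so what you treat as a black box is not quite ``restated in Section~\ref{sec:prelim}'' as a fixed-subgroup algorithm, but your reduction plus Mutanguha's injective machinery is exactly the kind of derivation the citation is standing in for. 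It is worth comparing your injectivization with the one the paper does prove later, Lemma~\ref{lem:inj-to-non-inj} (following Imrich--Turner \cite{Imrich1989Endomorphisms}): there one passes to a free factor $F'$ with retraction $\pi$ and works with $\pi\phi|_{F'}$, which is well suited to conjugacy questions but visibly changes the map, so it would not hand you $\fix(\phi)$ directly; your restriction to the iterated image $\phi^N(F)$ is the right variant for fixed subgroups precisely because it leaves the map, and hence the fixed subgroup, untouched. The only bookkeeping worth making explicit is that, having a basis of $G$ as words in $F$, you must rewrite $\phi$ of each basis element as a word in that basis (routine via the folded graph) before invoking the injective-case algorithm, and then translate the output back; with that said, your proposal is a sound and self-contained reduction.
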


Ventura recently gave a different proof of our Lemma \ref{lem:TwCP}, also based on Corollary \ref{corol:fixedAlg} \cite{ventura2021multiple}.
We include our short proof for completeness.
We use $\varphi$ rather than $\phi$ in Lemma \ref{lem:TwCP} to align with the notation in the proof of Proposition \ref{prop:ExtendedTwCP}.

\begin{lemma}
\label{lem:TwCP}
There exists an algorithm with input an endomorphism $\varphi$ of a free group $F$ and two elements $u, v\in F$, and which determines if $u$ and $v$ are $\varphi$-twisted conjugate.
\end{lemma}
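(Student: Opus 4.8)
The plan is to reduce the twisted conjugacy of $u$ and $v$ first to a ``twisted conjugacy to the identity'' question, and then to a fixed-subgroup computation in a free group of one higher rank, to which Corollary \ref{corol:fixedAlg} can be applied.

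First I would make the elementary observation that $u$ and $v$ are $\varphi$-twisted conjugate if and only if $uv^{-1}$ is $\psi$-twisted conjugate to $1$, where $\psi\colon F\to F$ is the endomorphism $g\mapsto v\varphi(g)v^{-1}$. Indeed, writing $v\varphi(x)=(v\varphi(x)v^{-1})v=\psi(x)v$, the equation $u=x^{-1}v\varphi(x)$ is equivalent to $uv^{-1}=x^{-1}\psi(x)$, and an element $w\in F$ is $\psi$-twisted conjugate to $1$ precisely when $w=x^{-1}\psi(x)$ for some $x\in F$. So it is enough to give an algorithm which, on input an endomorphism $\psi$ of $F$ and an element $w\in F$, decides whether there exists $x\in F$ with $\psi(x)=xw$.

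Next I would encode this equation as a fixed-point condition. Put $\hat F=F\ast\langle t\rangle$, a finitely generated free group, and let $\hat\psi\colon\hat F\to\hat F$ be the endomorphism with $\hat\psi|_F=\psi$ and $\hat\psi(t)=w^{-1}t$. For $x\in F$ one computes $\hat\psi(xt)=\psi(x)w^{-1}t$, so $xt\in\fix(\hat\psi)$ if and only if $\psi(x)=xw$. Hence $w$ is $\psi$-twisted conjugate to $1$ if and only if $\fix(\hat\psi)$ contains an element of the form $xt$ with $x\in F$, i.e. if and only if $\fix(\hat\psi)$ meets the coset $Ft$. By Corollary \ref{corol:fixedAlg} we may compute a basis of $\fix(\hat\psi)$, and deciding whether this finitely generated subgroup of $\hat F$ meets $Ft$ is then routine: in the (folded) Stallings graph of $\fix(\hat\psi)$ with basepoint $\ast$, the condition becomes the existence of a $t$-edge $p\to\ast$ with $p$ reachable from $\ast$ along edges labelled by the free generators of $F$, which is a finite search.

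The step requiring the most care is the reduction to the correct coset. The subgroup $\fix(\hat\psi)$ will in general contain ``parasitic'' fixed elements that are not of the form $xt$ — for instance conjugates $t^{-1}st$ of elements $s\in F$ satisfying $\psi(s)=w^{-1}sw$ — so it really is necessary to test membership in the coset $Ft$ rather than simply to check whether $\fix(\hat\psi)\not\leq F$. I would therefore fix the placement of the constant $w^{-1}$ in the definition of $\hat\psi$ so that the coset test exactly captures the relation $\psi(x)=xw$, and then verify both implications of the resulting equivalence carefully using the free-product normal form of $\hat F$, paying attention to possible cancellations coming from the fact that $\psi$ need not be injective.
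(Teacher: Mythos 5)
Your proof is correct and is essentially the paper's argument: both encode twisted conjugacy as a fixed-point problem for an extended endomorphism of a larger free group, compute $\fix$ via Corollary \ref{corol:fixedAlg}, and finish with a Stallings-graph check that the fixed subgroup contains an element of the prescribed form. The only difference is cosmetic: the paper adjoins two letters $B,E$ with $\varphi'(B)=Bv$, $\varphi'(E)=u^{-1}E$ and looks for fixed elements $BzE$, whereas you first normalise $v$ to $1$ and adjoin a single letter $t$ with $\hat\psi(t)=w^{-1}t$, testing the coset $Ft$.
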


\begin{proof}
Suppose that $F$ is the free group over the alphabet $\Sigma$, and write $F'$ for the free group over the alphabet $\Sigma\cup\{B, E\}$. Define the endomorphism $\varphi':F'\to F'$ by $\varphi'(w):=\varphi(w)$ for all $w\in F$, $\varphi'(B):=Bv$ and $\varphi'(E):=u^{-1}E$. Then there exists $x\in F$ such that $u=x^{-1}v\varphi(x)$ if and only if there exists an element $y\in\fix(\varphi')$ which has the form $y=BzE$ for $z\in F$ (in fact, $z=x$). By Corollary \ref{corol:fixedAlg}, we can compute a basis for $\fix(\varphi')$, and hence (using for example $F$-labelled graphs \cite{Kapovich2002Stallings}) we can determine if such an element $y\in\fix(\varphi')$ exists. The result follows.
\end{proof}

It is an elementary exercise to prove that twisted conjugacy classes, i.e. those sets $[u]_{\phi}:=\{v\in F\mid \:\exists\:x\in F \:s.t.\:u=x^{-1}v\phi(x)\}$, are equivalence classes.
More subtle is the following useful result which partitions these equivalence classes.

\begin{lemma}
\label{lem:TwCC}
Let $\phi$ be an endomorphism of a free group $F$. Then for all $i, j\in\mathbb{N}_0$ and all $u\in F$, $\phi^i(u)$ is $\phi$-twisted conjugate to $\phi^j(u)$.
\end{lemma}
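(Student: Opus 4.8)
The statement claims that for an endomorphism $\phi$ of a free group $F$, and all $i, j \in \mathbb{N}_0$ and all $u \in F$, the elements $\phi^i(u)$ and $\phi^j(u)$ are $\phi$-twisted conjugate. My plan is to reduce to a single elementary step and then induct.

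First I would establish the base case: $\phi^i(u)$ is $\phi$-twisted conjugate to $\phi^{i+1}(u)$ for every $i \in \mathbb{N}_0$ and every $u \in F$. This is immediate from the definition with the conjugating element chosen to be (a power of) $u$ itself. Indeed, writing $w = \phi^i(u)$, I want $x \in F$ with $\phi^i(u) = x^{-1} \phi^{i+1}(u) \phi(x)$, i.e. $w = x^{-1}\phi(w)\phi(x)$. Taking $x = w^{-1} = \phi^i(u)^{-1}$ gives $x^{-1}\phi(w)\phi(x) = \phi^i(u)\,\phi(\phi^i(u))\,\phi(\phi^i(u))^{-1} = \phi^i(u) = w$, as desired. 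So $\phi^i(u) = [\phi^{i+1}(u)]_\phi$ via the witness $x = \phi^i(u)^{-1}$.

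Next, since twisted conjugacy classes are equivalence classes (stated in the excerpt just before Lemma \ref{lem:TwCC}), $\phi$-twisted conjugacy is in particular symmetric and transitive. From the base case and a straightforward induction on $|j - i|$, it follows that $\phi^i(u)$ is $\phi$-twisted conjugate to $\phi^j(u)$ for all $i, j \in \mathbb{N}_0$: if $j > i$, chain together the single steps $\phi^i(u) \sim_\phi \phi^{i+1}(u) \sim_\phi \cdots \sim_\phi \phi^j(u)$; if $j < i$, use symmetry; if $j = i$, use reflexivity.

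I do not anticipate a genuine obstacle here — the only thing to get right is the correct choice of conjugator in the base case and the direction of the twisting convention (whether $u = x^{-1}v\phi(x)$ or $v = x^{-1}u\phi(x)$), so I would double-check that the witness $x = \phi^i(u)^{-1}$ matches the convention fixed in the paper. The remainder is a formal consequence of the equivalence-relation property already granted.
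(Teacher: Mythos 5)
Your proof is correct and is essentially the paper's own argument: the paper's single step is the identity $v=v\,\phi(v)\,\phi(v)^{-1}$, i.e. the witness $x=v^{-1}$ showing $v\sim_\phi\phi(v)$, which is exactly your base case with $v=\phi^i(u)$, followed by the same appeal to transitivity of twisted conjugacy classes. Your choice of conjugator matches the paper's convention $u=x^{-1}v\phi(x)$, so there is nothing to fix.
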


\begin{proof}
As $v=v\phi(v)\phi(v)^{-1}$, we have $[v]_{v}=[\phi(v)]_{\phi}$ for all $v\in F$.
In particular, $[\phi^i(u)]_{\phi}=[\phi^{i+1}(u)]_{\phi}$ for all $u\in F$ and $i\geq0$.
By transitivity of twisted conjugacy classes, it follows that $[u]_{\phi}=[\phi^i(u)]_{\phi}$ for all $u\in F$ and $i\geq0$.
Another application of transitivity gives us that $[\phi^i(u)]_{\phi}=[\phi^j(u)]_{\phi}$ for all $u\in F$ and $i,j\in\mathbb{N}_0$, as required.
\end{proof}

We now prove Theorem \ref{thm:UltraTwCP} for $n>0$.
The following is completely general, and supersedes the analogous result in \cite[Proof of Proposition 1.4]{Bogopolski2006conjugacy} for automorphisms (for the automorphism case, $p$ may be assumed to be $0$).
%Note that Theorem \ref{thm:UltraTwCP} requires $p\geq q$, but this restriction is unnecessary here by Lemma \ref{lem:TwCC} (see the proof of Theorem \ref{thm:UltraTwCP} for details).

\begin{proposition}
\label{prop:ExtendedTwCP}
There exists an algorithm with input an endomorphism $\phi$ of a free group $F$, a non-zero natural number $n\in\mathbb{N}$ and two elements $u, v\in F$, and output either a pair $(p, q)\in \mathbb{N}_0\times\mathbb{N}_0$ such that $\phi^p(u)$ is $\phi^n$-twisted conjugate to $\phi^q(v)$, or $\mathrm{no}$ if no such pair exists.
\end{proposition}

\begin{proof}
Any $p\in\mathbb{N}_0$ decomposes as $np'+r_p$ for some $p'\in\mathbb{N}_0$ and some $0\leq r_p<n$, and by Lemma \ref{lem:TwCC} we have that, $\phi^{r_p}(u)$ is $\phi^n$-twisted conjugate to $\phi^p(u)$.
Similarly, for all $q\in\mathbb{N}_0$ there exists some $0\leq r_q<n$ such that $\phi^{r_q}(v)$ is $\phi^n$-twisted conjugate to $\phi^q(v)$.
Therefore, there exists a pair $(p, q)\in\mathbb{N}_0\times\mathbb{N}_0$ such that $\phi^p(u)$ is $\phi^n$-twisted conjugate to $\phi^q(v)$ if and only if there exists such a pair $(r_p, r_q)\in\mathbb{N}_0\times\mathbb{N}_0$ which additionally satisfies $r_p, r_q<n$.

The required algorithm is therefore to apply the algorithm of Lemma \ref{lem:TwCP} with $\varphi:=\phi^n$ to all pairs $\phi^i(u), \phi^j(v)$ with $i, j<n$.
\end{proof}

%%%%%%%%%%%%%%%%%%%%%%%%%%%%%%%%%%%%%%%%%%%%%
%%%%%%%%%%%%%%%%%%%%%%%%%%%%%%%%%%%%%%%%%%%%%
%------------------------------------------------------------------------------------------------------------------------
%------------------------------------------------------------------------------------------------------------------------
%------------------------------------------------identities of the form \phi^t(c)\sim c^d------------------------
%------------------------------------------------------------------------------------------------------------------------
%------------------------------------------------------------------------------------------------------------------------
%%%%%%%%%%%%%%%%%%%%%%%%%%%%%%%%%%%%%%%%%%%%%
%%%%%%%%%%%%%%%%%%%%%%%%%%%%%%%%%%%%%%%%%%%%%

\section{Stable iterates}
\label{sec:iterates}
In the following two sections we prove the generalisation of Brinkmann's conjugacy algorithm required for Theorem \ref{thm:UltraTwCP}, in the case of $\phi$ injective.

Our first task is to understand how large powers of $\phi$ behave, and to do this we prove a stability result. Specifically, we consider elements of $\phi^i(F)$ which are conjugate in $F$, and we prove that there exists a {computable} number $k\in\mathbb{N}$ such that for all $i\geq k$ any ``new'' conjugations which appear when progressing from $\phi^i(F)$ to $\phi^{i+1}(F)$ are of the form $\phi^{i+1}(c)\sim c^d$ for $d\geq2$.
This is encoded in the graphical notion of the ``stable iterate'' $\hat{\Lambda}_k$ of $\phi$, defined below in Section \ref{sec:Stabilisation}.
We then prove that we can tell if a given element stabilises under powers of $\phi$ in this way; formally, if $\phi^i(w)$ is carried by the stable iterate $\hat{\Lambda}_k$.

\subsection{Graphs, graphs of roses, and automorphic expansions}
We begin with some important definitions, starting in the standard setting of maps of graphs.
Stallings introduced this theory in the 1980s \cite{stallings1983topology}, and it is an extremely powerful tool for the study of subgroups of free groups \cite{Kapovich2002Stallings}. We then define maps of ``graphs of roses''; automorphic expansions are a specific kind of map of graphs of roses.

\p{Topological graphs}
For brevity, we shall write ``component'' for ``connected component'' throughout.
A \emph{based topological graph $(\Gamma, \star)$} is a finite $1$-dimensional CW-complex $\Gamma$ along with a distinguished vertex ($0$-cell), called a \emph{basepoint}, for each component of $\Gamma$.
Such a graph $(\Gamma, \star)$ is \emph{connected} if $\Gamma$ is connected, and \emph{degenerate} if it is a finite set of points.
For convenience, we often shorten ``based topological graph'' to ``topological graph'', and write $\Gamma$ rather than $(\Gamma, \star)$.
The \emph{core graph} of a based topological graph $\Gamma$ is the smallest subgraph which contains all based loops in each component of $\Gamma$ (this can be obtained by iteratively removing all degree-one vertices, except base points).

\p{Maps of graphs}
A \emph{based map of graphs $f:\Gamma\to\Gamma'$} is a continuous function of based topological graphs that sends vertices to vertices, base points to base points, and edges to (possibly degenerate) paths.
When the context is clear, we refer to $f$ as a \emph{map}.
Such a map $f$ is \emph{folded} if it is a topological immersion (i.e. locally injective).
The \emph{fundamental groupoid $\pi_1(\Gamma)$} of $\Gamma$ is the set of based, topologically immersed loops of $\Gamma$, under the operation of concatenation and tightening.
A map of graphs $f:\Gamma\to\Gamma'$ induces a groupoid homomorphism $f_*:\pi_1(\Gamma)\to\pi_1(\Gamma')$, and decomposes uniquely as $h\circ g$ where $g:\Gamma\to\widehat{\Gamma}$ is $\pi_1$-surjective and $h:\widehat{\Gamma}\to\Gamma'$ is folded (and hence $\pi_1$-injective) \cite[Section 3.3 \& Corollary 4.4 \& Proposition 5.3]{stallings1983topology}.

\p{$\mathbf{F}$-labelled graphs}
We can associate the free group $F$ with basis $a_1, a_2, \ldots, a_k$ to a rose $R_F$ with $k$ petals by choosing an orientation of the edges of $R_F$ and identifying the $i^{\operatorname{th}}$ petal with the basis element $a_i$; this induces an isomorphism $\mu: F\to \pi_1(R_F)$.
If $f:\Gamma\to R_F$ is a folded map of graphs then the labelling of $R_F$ induces an \emph{$F$-labelling} on $\Gamma$, and labels of based loops in $\Gamma$ correspond precisely to the subgroup $f_*(\pi_1(\Gamma))\leq\pi_1(R_F)$ (this is the main observation in the combinatorial theory of maps of graphs, detailed in \cite{Kapovich2002Stallings}).

Any endomorphism $\varphi: F\to F$ is equivalent to a map of graphs $\overline{\varphi}: R_F\to R_F$ such that $\overline{\varphi}_*\circ\mu=\mu\circ\varphi$.
This folds to give a map $\widehat{\varphi}: \widehat{R_F}\to R_F$.
Fixing $\phi:F\to F$, we shall write $\widehat{\phi^i}: \Gamma_i\to R_F$ for the folded map of graphs corresponding to $\phi^i$.
We shall write $\Lambda_i$ for the non-contractible components of the core graph of the pullback of $\widehat{\phi^i}$ with itself.
This section deals with the behaviour of $\Lambda_i$ as $i$ increases.

We now turn to automorphic expansions, which give a more refined view of the maps $\overline{\phi^i}: R_F\to R_F$.

\p{Graphs of roses}
A \emph{rose} is a topological graph with exactly one vertex.
A \emph{(connected) graph of roses} is a triple $(\Gamma, \mathcal{G}, g)$ where $\Gamma$ is a (connected) topological graph, $\mathcal{G}$ is a disjoint union of roses (so also a topological graph), and $g: \mathcal{G}\to\Gamma^{(0)}$ is a map of CW-complexes which is bijective on $0$-cells, that is, $g$ associates to each vertex $v$ of $\Gamma$ a rose $g^{-1}(v)$ from $\mathcal{G}$.
We call the topological graph $\Gamma$ the \emph{underlying subcomplex}, and the components of $\mathcal{G}$ the \emph{vertex roses}.
Following Mutanguha, we often write $(\Gamma, \mathcal{G})$ for a graph of roses, so suppress the map $g$ from the notation.
A \emph{vertex} of $(\Gamma, \mathcal{G}, g)$ is a vertex of $\Gamma$, while a \emph{subgraph} is a triple $(\Delta, g^{-1}(\Delta^{(0)}), g|_{g^{-1}(\Delta^{(0)})})$ where $\Delta$ is a subgraph of $\Gamma$ (the map in this triple is simply the obvious restriction map).
A \emph{component} is a subgraph where additionally $\Delta$ is a component of $\Gamma$.

If $\Gamma$ is a topological graph then the \emph{induced graph of roses $\graph(\Gamma)$} is the graph of roses formed by labelling every vertex of $\Gamma$ with a singleton.

\p{Maps of graphs of roses}
A \emph{based map of graphs of roses $(f, \Psi): (\Gamma, \mathcal{G}, g)\to(\Gamma', \mathcal{G}', g')$} is the following data:
\begin{enumerate}
\item a pair of maps of graphs $f:\Gamma\to\Gamma'$ and $\Psi: \mathcal{G}\to \mathcal{G}'$ satisfying $g'\circ \Psi=f\circ g$; and
\item\label{MoGoR:2} an assignment to each (oriented) edge $e$ of $\Gamma$, a sequence of based loops $\rho(e)_i \in\pi_1(g'^{-1}({v_i}))$ where $(v_i)_{i\geq0}$ is the sequence of vertices along the edge-path $f(e)$.
\end{enumerate}
When the context is clear, we refer to $(f, \Psi)$ as a \emph{map}.

If $f:\Gamma\to\Gamma'$ is a map of graphs, then this naturally induces a map of graphs of roses $\graph(f):\graph(\Gamma)\to\graph(\Gamma')$.

Each graph of roses $(\Gamma, \mathcal{G})$ can be naturally associated to a topological graph $\top(\Gamma, \mathcal{G})$, called the \emph{blow-up} of $(\Gamma, \mathcal{G})$, and this is formed by identifying each vertex $v\in\Gamma$ with the basepoint of the vertex rose $g^{-1}(v)$, and by setting the basepoints of $\top(\Gamma, \mathcal{G})$ to be the images of the basepoints of $\Gamma$.
By (\ref{MoGoR:2}) in our definition, paths in $(\Gamma, \mathcal{G})$ project to paths in the blow-up.
A map of graphs of roses $(f, \Psi)$ is \emph{$\pi_1$-injective} if $\top(f, \Psi)$ is $\pi_1$-injective.
A \emph{path $\gamma$ in $(\Gamma, \mathcal{G})$} is a sequence of paths in $\Gamma$ and $\mathcal{G}$ such that the projection $\top(\gamma)$ of $\gamma$ to $\top(\Gamma, \mathcal{G})$ is a path in $\top(\Gamma, \mathcal{G})$. Similarly, a \emph{loop in $(\Gamma, \mathcal{G})$} is a path which projects to a loop in $\top(\Gamma, \mathcal{G})$.
An \emph{immersion} of graphs of roses is a $\pi_1$-injective map of graphs of roses satisfying certain additional conditions; these conditions are rather complicated and are not applied in this paper. The only fact that we require is that the image under an immersion of an immersed loop it itself an immersed loop. We refer the reader to Mutanguha's work for a precise definition of immersions of graphs of roses \cite{JPM} (see also \cite{Mutanguha2021dynamics}, where they are called \emph{relative immersions}).
Such maps relate to topological immersions on the Bass-Serre tree, rather than to immersions of the blow-up graph.

\p{Automorphic expansions}
We are primarily concerned with maps $(f, \Psi): (\Gamma, \mathcal{G})\to(\Gamma, \mathcal{G})$ from a graph of roses to itself, and so from now on we consider just this case.

%\p{Automorphic expansions}
A subgraph of $(\Gamma, \mathcal{G})$ is \emph{$(f, \Psi)$-invariant} if the underlying subcomplex is $f$-invariant, i.e. $f(\Gamma)=\Gamma$.
The \emph{stable subgraph for $(f, \Psi)$} is the $(f, \Psi)$-invariant subgraph of $(\Gamma, \mathcal{G})$ consisting of $f$-periodic vertices and edges.
An \emph{expansion} is an immersion $(f, \Psi)$ whose stable subgraph is degenerate, i.e. a collection of points.
A map $(f, \Psi)$ is \emph{automorphic} if $\Psi$ restricts to a based homotopy equivalence of the $\Psi$-periodic components of $\mathcal{G}$ (i.e. to those roses $R$ with $f$-periodic associated vertex $g(R)\in\Gamma^{(0)}$).
An \emph{automorphic expansion} is therefore an expansion $(f, \Psi): (\Gamma, \mathcal{G})\to(\Gamma, \mathcal{G})$ which is automorphic.

\p{Representing homomorphisms}
A \emph{homotopy equivalence} is a map $(f, \Psi)$ whose blow-up $\top(f, \Psi)$ is a based homotopy equivalence.
Two maps $(f, \Psi): (\Gamma, \mathcal{G})\to(\Gamma, \mathcal{G})$ and $(f', \Psi'): (\Gamma', \mathcal{G}')\to(\Gamma', \mathcal{G}')$ are \emph{homotopic} via the homotopy equivalence $\alpha: (\Gamma, \mathcal{G})\to (\Gamma', \mathcal{G}')$ if $\top((f', \Psi')\circ\alpha)$ and $\top(\alpha\circ(f, \Psi))$ are homotopic relative to basepoints.

Mutanguha proved that there exists an algorithm which constructs for every injective endomorphism $\varphi:F\to F$ an automorphic expansion of a graph of roses, such that the automorphic expansion represents $\varphi$.
\begin{theorem}[Mutanguha, \cite{JPM}]
\label{thm:JPM}
There exists an algorithm with input an injective endomorphism $\varphi: F\to F$ and output a graph of roses $(\Delta, \mathcal{D})$, a homotopy equivalence $\alpha: \operatorname{graph}(R_F)\to(\Delta, \mathcal{D})$, and an automorphic expansion $\widetilde{\phi}=(f,\Psi):(\Delta, \mathcal{D})\to (\Delta, \mathcal{D})$ homotopic to $\operatorname{graph}(\overline{\varphi})$ via $\alpha$. 
\end{theorem}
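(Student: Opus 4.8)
The plan is to follow the Bestvina--Handel strategy for producing train track maps, using the graph-of-roses formalism to absorb the failure of surjectivity of $\varphi$. One begins with the tautological representative $\graph(\overline{\varphi})\colon\graph(R_F)\to\graph(R_F)$, all of whose vertex roses are singletons, and constructs $(\Delta,\mathcal{D})$ together with $\widetilde{\phi}=(f,\Psi)$ by repeatedly applying a finite repertoire of elementary moves: subdivision of edges, tightening of the edge-images $f(e)$, Stallings folds, and --- the move with no analogue for automorphisms --- collapsing an $f$-invariant subgraph on which $f$ restricts to a homotopy equivalence (in particular the stable subgraph, or an $f$-invariant free factor that has been isolated) into the vertex roses at its image vertices, recording its fundamental group as rose loops and its behaviour under $f$ through the data $\Psi$ and the loop sequences $\rho(e)_i$. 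Each move is realised by an explicit homotopy equivalence of graphs of roses, and the required $\alpha$ is the composite of the moves performed; since every move is a homotopy equivalence, the resulting $\widetilde{\phi}$ is automatically homotopic to $\graph(\overline{\varphi})$ via $\alpha$.

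To drive the algorithm, filter $\Gamma$ by the transition matrix of $f$: order the edges so that this matrix is block upper triangular with irreducible diagonal blocks, the \emph{strata}, each either polynomial (a permutation or zero block) or exponential (Perron--Frobenius eigenvalue $>1$). Attach to the current map the complexity given by the lexicographically ordered tuple consisting of: the number of exponential strata; the non-increasing list of their Perron--Frobenius eigenvalues; the number of edges of $\Gamma$; the number of non-$f$-periodic vertices of $\Gamma$; and the number of edges of the stable subgraph. The key combinatorial step is to show that if the current map is not yet an automorphic expansion --- i.e. it fails to be a (relative) immersion, or its stable subgraph is non-degenerate, or $\Psi$ is not a homotopy equivalence on some periodic vertex rose --- then one of the moves applies and strictly decreases this complexity. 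Failures of $\pi_1$-injectivity are removed by folds exactly as in Stallings' theory, after which one restores the (relative) immersion bookkeeping; and any periodic edges, which by definition are permuted by a power of $f$ and so lie in a subgraph on which $f$ is a homotopy equivalence, are removed by the collapse move. Since the resulting sequence of tuples lies in a well-founded order --- this is the point at which one imports the combinatorial estimates on Perron--Frobenius eigenvalues from Bestvina--Handel --- the process terminates.

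When the process halts it yields a (relative) immersion $(f,\Psi)\colon(\Delta,\mathcal{D})\to(\Delta,\mathcal{D})$ with degenerate stable subgraph, i.e. an expansion; moreover the self-map induced on each $\Psi$-periodic vertex rose is $\pi_1$-surjective (anything else would have been collapsed) and, being the restriction of an immersion, is $\pi_1$-injective, hence a based homotopy equivalence --- so the expansion is automorphic, as required. Effectiveness is routine: each move is computable from the current finite data, the complexity and its order are computable (Perron--Frobenius eigenvalues of nonnegative integer matrices are computable and comparable), and no a priori bound on the number of moves is needed, since after each move one simply tests whether the output conditions of the theorem now hold.

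I expect the main obstacle to be \textbf{termination}, exactly as in the original Bestvina--Handel theorem. The extra difficulty here is that folding in the non-surjective setting can create new vertex-rose loops and can transfer material between the underlying graph $\Gamma$ and the roses $\mathcal{D}$, so one must verify that these transfers never inflate an exponential stratum: intuitively, all genuinely growing behaviour must remain inside $\Gamma$ while all automorphism-like behaviour is siphoned into $\mathcal{D}$, and it is precisely this separation of roles, built into the graph-of-roses formalism, that the complexity above is designed to exploit. A secondary but unavoidable chore is checking that after every move the data still forms a legitimate map of graphs of roses --- the compatibility condition $g\circ\Psi=f\circ g$ and the existence of consistent loop sequences $\rho(e)_i$ --- and still satisfies the technical side conditions in the definition of a (relative) immersion.
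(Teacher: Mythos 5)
The paper does not actually prove this statement: it is imported wholesale from Mutanguha's work \cite{JPM}, so there is no in-paper argument to compare yours against. What can be said is that your proposed route is not the one Mutanguha takes. His construction does not run a Bestvina--Handel complexity induction on strata and Perron--Frobenius eigenvalues; it builds the expansion \emph{relative to} a canonical $\phi$-invariant free factor system (the ``elliptic'' system carrying all the eventually-automorphic behaviour), which is located by iterating Stallings-graph constructions (pullbacks/preimages) and whose termination is controlled by explicit rank bounds on chains of free factor systems --- exactly the kind of bound, $2(\rk(F)-1)^2$ and chains of length at most $\rk(F)$, that the present paper borrows from him in Sections 3--4. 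The automorphic condition on the periodic vertex roses is then not something extracted from the moves but comes from the dynamics of that canonical system (stable images of injective endomorphisms are invariant free factors on which the map restricts to an automorphism, in the spirit of Imrich--Turner).

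The genuine gap in your proposal is termination, and specifically the interaction between the automorphic condition and your complexity. If at some stage $\Psi$ restricted to a $\Psi$-periodic vertex rose is injective but \emph{not} $\pi_1$-surjective, no collapse fixes this --- your parenthetical ``anything else would have been collapsed'' is unjustified, since collapsing removes periodic subgraphs of $\Gamma$, not proper self-embeddings inside a rose. The only remedy is to blow part of that rose back up into $\Gamma$, which increases the number of edges and can create new (possibly exponential) strata; you give no argument that the eigenvalue entries of your lexicographic tuple decrease to compensate, and the Bestvina--Handel estimates you invoke are proved for homotopy equivalences and do not transfer to nonsurjective endomorphisms (there is no outer-automorphism expansion factor bounding the top eigenvalue from below). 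A second unproved step is the claim that the $f$-periodic vertices and edges form a subgraph on which $f$ restricts to a homotopy equivalence, which is what your collapse move needs; as stated this is an assertion, not a consequence of periodicity. Without these two points the induction has no well-founded measure, which is precisely the difficulty Mutanguha's pullback/free-factor-system approach is designed to avoid.
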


\subsection{Stabilisation of iterated pullbacks}
\label{sec:Stabilisation}
Mutanguha used certain maps of graphs based around an iterated pullback construction to understand identities of the form $\phi^t(c)\sim c^d$.
We review this construction, and then prove that after finitely many steps it stabilises, and furthermore that this ``{stable iterate}'' can be computed.

\p{Iterated pullbacks}
Recall that for $\phi:F\to F$, we write $\widehat{\phi^i}: \Gamma_i\to R_F$ for the folded map of graphs corresponding to $\phi^i$, and $\Lambda_i$ for the non-contractible components of the core graph of the pullback of $\widehat{\phi^i}$ with itself.
%The graphs $\Lambda_i$ are the \emph{iterated pullbacks} of $\phi$.
We may post-compose the corresponding map $\Lambda_i\to R_F$ with $\overline{\phi}$ and fold it; denote the corresponding $F$-labelled graph by $\phi(\Lambda_{i})$.
This graph is a subset of $\Lambda_{i+1}$, so let $\hat\Lambda_{i+1}$ denote the complement $\Lambda_{i+1}\setminus\phi(\Lambda_{i})$.
Algebraically, each component of $\Lambda_j$ corresponds to an element of the set:
\[
\Omega_j:=\{[\phi^j(F)\cap\phi^j(F)^x]\mid x\in F\}\setminus\{[\epsilon]\}
\]
where $\epsilon$ is the identity of $F$, while each component of $\hat\Lambda_j$ corresponds to an element of:
\[
\hat{\Omega}_j:=\{[\phi^j(F)\cap\phi^j(F)^x]\mid x\in F\setminus\im(\phi)\}
\]
We refer to the graphs $\hat\Lambda_j$ as \emph{iterates}.
For $j\gg0$ the iterates ``stabilise'', and so they capture the dynamics of large powers of $\phi$.
The notion of stability we use is based on cyclic subgroup systems.

\p{Subgroup systems}
A \emph{(cyclic) subgroup system} of $F$ is a finite collection of (cyclic) subgroups $\mathcal{A} = \{A_1,\ldots , A_l\}$ of $F$.
We call the $A_i\in\mathcal{A}$ the \emph{components} of $\mathcal{A}$.
The \emph{empty system} contains no components, i.e. is empty, while the \emph{trivial system} contains only the trivial subgroup.
For subgroup systems $\mathcal{A}, \mathcal{B}$, we say $\mathcal{A}$ \emph{carries} $\mathcal{B}$ if for every component $B \in \mathcal{B}$, there is a component $A \in \mathcal{A}$ and element $x \in F$ such that $B \leq xAx^{-1}$.
For $\phi:F\to F$ an endomorphism, a subgroup system $\mathcal{A} = \{A_1,\ldots , A_l\}$ is \emph{$[\phi]$-invariant}, or \emph{invariant} if $\phi$ is implicit, if $\mathcal{A}$ carries $\phi(\mathcal{A}) = \{\phi(A_1),\ldots , \phi(A_l)\}$, and is \emph{properly $[\phi]$-invariant}, or \emph{properly invariant}, if the inclusions $\phi(A_i) \leq xA_jx^{-1}$ are all proper.

Any $F$-labelled graph $\Gamma$ \emph{represents} a subgroup system, as it corresponds to a map of graphs $f:\Gamma\to R_F$ where $R_F$ is the rose with vertex $v$, and so the set $f^{-1}(v)=\{v_1, \ldots, v_{l}\}$ of $\Gamma$ gives a subgroup system $\{f_*(\pi_1(\Gamma, v_1)), \ldots, f_*(\pi_1(\Gamma, v_{l}))\}$ of $F=\pi_1(R_F)$.
%Each iterate $\Lambda_j$ may be viewed as a subgroup system, as it corresponds to a map of graphs $f:\Lambda_j\to R_F$ where $R_F$ is the rose with vertex $v$, and so the set $f^{-1}(v)$ gives a finite set of basepoints $\{v_1, \ldots, v_{j_{l}}\}$, and hence we have a subgroup system $\{f_*(\pi_1(\Lambda_j, v_1)), \ldots, f_*(\pi_1(\Lambda_j, v_{j_l}))\}$ of $F=\pi_1(R_F)$.
In particular,
we can talk about one iterate carrying another.
Furthermore, we can view components of iterates as subgroup systems, so can talk about an element $u\in F$ being carried by a component $\mathcal{C}$ of an iterate $\Lambda_j$.
%; this means that
%If $\Gamma$ is an $F$-labelled graph, so we have a folded map of graphs $f:\Gamma\to R_F$ for $R_F$ a rose with vertex $v$, then the finite set $f^{-1}(v)$ gives a finite set of base points, and so we can view $f_*(\pi_1(\Gamma))$ as a finite set of conjugate subgroups in $F=\pi_1(R_F)$.
%Subgroup systems and carrying extend to maps of graphs.
%Indeed, if $f:\Gamma\to\Gamma'$ is a based map of graphs, with $v\in V\Gamma'$ the basepoint of $\Gamma'$, then $f^{-1}(v)$ gives a finite set of basepoints and so we can view $f_*(\pi_1(\Gamma))$ as a subgroup system in $\pi_1(\Gamma')$.
%Then a component $\mathcal{C}$ of $\Gamma$ \emph{carries} a subgroup system $\mathcal{A}$ of $\pi_1(\Gamma')$ if $f_*(\pi_1(\mathcal{C}))$ carries $\mathcal{A}$.
%Furthermore, for $f:\Gamma_f\to\Gamma'$ and $g:\Gamma_g\to\Gamma'$, a subspace $\mathcal{S}$ of $\Gamma_f$ \emph{carries} a subspace $\mathcal{S}'$ of $\Gamma_g$ if $f_*(\pi_1(\mathcal{S}))$ carries $g_*(\pi_1(\mathcal{S}'))$.

%for all $i$ there exists some $j$, with $1\leq i,j\leq l$, such that $\phi(A_i)$ is carried by $A_j$, and is \emph{properly invariant} if furthermore $\phi(A_i)$ is conjugate to a proper subgroup of $A_j$.

\p{Stabilisation}
%We shall prove that iterated pullbacks ``stabilise'' after finitely many steps, and moreover that a stabilised graph can be constructed.
%
%Mutanguha studied the iterates $\hat\Lambda_{i}$ of an injective endomorphism $\phi:F\to F$ in detail.
%Indeed,
Suppose that $\hat\Lambda_j$ is non-empty for all $j\geq1$, and write $k_0:= 2(\rk(F)-1)^2$.
Mutanguha proved that for all $k\geq k_0$, the graph $\hat\Lambda_k$ consists of cyclic components \cite[Lemma 5.1.4]{Mutanguha2021dynamics}.
He then proved that there exists some $k\geq k_0$ such that a component of $\hat{\Lambda}_k$ represents a properly $[\phi]$-invariant proper cyclic subgroup system \cite[Proposition 5.1.5]{Mutanguha2021dynamics}.
We need the following (algorithmic and structural) strengthening of this result.
Mutanguha explained the proof of this result to me in a private communication; the proof is reproduced below with his kind permission.

An iterate $\hat{\Lambda}_k$ is \emph{stable} if every component of $\hat{\Lambda}_k$ represents a properly $[\phi]$-invariant proper cyclic subgroup system.
%He then suggests, but does not prove, that such an iterate $\hat{\Lambda}_k$ can be computed.
%Mutanguha explained his proof to me
%It is stronger than his previous result as it is about all the components rather than a single component, and because it is algorithmic.
%
%If $i>j$ then, as $\im(\phi^i)\leq\im(\phi^j)$, we have that $\hat{\Lambda}_j$ carries $\hat{\Lambda}_i$.
%If additionally $i, j>k_0$, so the components of the graphs are cyclic, then this carrying induces a map $\widehat{\phi}_{i, j}:\hat\Lambda_i\to\hat\Lambda_j$ which is injective on components (one can think of this as a ``partial'' covering map, i.e. is a covering map if we discard those components of $\hat{\Lambda}_j$ which are not path connected to $\widehat{\phi}_{j, k_0}^{-1}(\widehat{\phi}_{i, k_0}(\hat\Lambda_i))$.
%Note that this map satisfies $\widehat{\phi}_{i, k}=\widehat{\phi}_{j, k}\circ\widehat{\phi}_{i, j}$, i.e. the composition of two maps, if it is defined, is the obvious map.
%For $j>k_0$, we call a component $\mathcal{C}$ of $\hat\Lambda_{j}$ \emph{stable} if for all $i>j$, $\mathcal{C}\subset\im(\widehat{\phi}_{i, j})$.
%We call $\hat\Lambda_{j}$ itself \emph{stable}, and refer to it as a \emph{stable iterate}, if all of its components are stable.
%
%
%
%\p{Computing a stable iterate}
%
Let $|\phi|$ be the word-norm of $\phi$ (i.e. max. word-length of $\phi(a)$ over the basis elements used to define word-length).

\begin{proposition}[Mutanguha]
\label{prop:JPMremark}
There exists an algorithm with input an automorphic expansion $\widetilde{\phi}=(f,\Psi)$ defined on a graph $(\Delta, \mathcal{D})$,
and output a number $k = k(f,\Psi)$
such that $\hat \Lambda_k$ is stable or empty, and a properly invariant cyclic subgroup system (for the induced injective endomorphism of $F$) for each component of $\hat \Lambda_k$.
\end{proposition}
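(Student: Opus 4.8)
The plan is to make the stabilisation index of Lemma~\ref{lem:SIexistence} effective, using the structure of the automorphic expansion, and then to read the required cyclic subgroup system off the resulting stable iterate. Write $\phi\colon F\to F$ for the injective endomorphism induced by $(f,\Psi)$ on $\pi_1$ of the blow-up $\top(\Delta,\mathcal{D})$; after fixing a basis of $\pi_1(\top(\Delta,\mathcal{D}))$ we may compute $\phi$ explicitly, together with $\rk(F)$ and the word-norm $|\phi|$. For any given $j$, the $F$-labelled graphs $\Lambda_j$ and $\hat\Lambda_j=\Lambda_j\setminus\phi(\Lambda_{j-1})$ of Section~\ref{sec:Stabilisation} are then computable (form $\widehat{\phi^j}$, take the core of the pullback of $\widehat{\phi^j}$ with itself, and subtract $\phi(\Lambda_{j-1})$); in particular, for $j\geq k_0:=2(\rk(F)-1)^2$ we can compute the number of components of $\hat\Lambda_j$, a cyclically reduced generator of the cyclic subgroup $A_{\mathcal{C}}\leq F$ carried by each component $\mathcal{C}$, and the carrying maps $\widehat{\phi}_{i,j}$.

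Compute $\hat\Lambda_{k_0}$. If it is empty then, as in the proof of Lemma~\ref{lem:SIexistence}, output $k:=k_0$ together with the empty system, which is vacuously properly invariant. Otherwise the main step is to bound, in terms of $(f,\Psi)$, the first iterate past which $\hat\Lambda$ is stable. The counting argument behind Lemma~\ref{lem:SIexistence} shows only that the number of components of $\hat\Lambda_j$ is non-increasing in $j\geq k_0$ and that $\hat\Lambda_j$ is stable once this number is eventually constant; this is an existence statement, not an algorithm, since a priori the component count can remain constant for many steps and then drop again. The effective bound instead comes from an effective version of Mutanguha's structural result \cite[Proposition 5.1.5]{Mutanguha2021dynamics}: since $(f,\Psi)$ is an \emph{expansion}, the iterates of $f$ grow in a controlled way, and that proposition identifies the stable components of $\hat\Lambda_k$ with the conjugacy classes of a finite, intrinsically determined collection of cyclic subgroups arising from the interaction between the expanding edges of $\Delta$ and the $\Psi$-periodic vertex roses of $(\Delta,\mathcal{D})$. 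Each such cyclic subgroup is carried by a component of the already-computed graph $\hat\Lambda_{k_0}$, hence has computably bounded length; using the bounded cancellation available for the automorphic expansion, one then bounds --- in terms of the combinatorial size of $(\Delta,\mathcal{D},f,\Psi)$, which is itself bounded in terms of $|\phi|$ and $\rk(F)$ --- both an iterate by which every one of these cyclic subgroups has appeared as an overlap in the pullbacks, and an iterate by which every transient (non-stable) overlap has been absorbed into $\phi(\Lambda_{j-1})$ for good; taking $k$ larger than both bounds and than $k_0$, the graph $\hat\Lambda_k$ is stable. I expect turning \cite[Proposition 5.1.5]{Mutanguha2021dynamics} into such a quantitative statement --- the argument Mutanguha communicated privately --- to be the main obstacle.

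Given such a $k\geq k_0$, compute $\hat\Lambda_k$ and read off a cyclically reduced generator $c_{\mathcal{C}}$ of $A_{\mathcal{C}}=\langle c_{\mathcal{C}}\rangle$ for each component $\mathcal{C}$. By \cite[Proposition 5.1.5]{Mutanguha2021dynamics} the collection $\{A_{\mathcal{C}}\}$ is properly $[\phi]$-invariant and $\phi$ permutes the components of $\hat\Lambda_k$ up to proper powers: for each $\mathcal{C}$ there are a component $\mathcal{C}'$ and an integer $d\geq 2$ with $\phi(c_{\mathcal{C}})$ conjugate to $(c_{\mathcal{C}'})^{\pm d}$. Since membership in, and conjugacy into, a cyclic subgroup of a free group are decidable and $\hat\Lambda_k$ has finitely many components, we can locate the witnessing $\mathcal{C}'$ and $d$ for each $\mathcal{C}$ by exhaustive search; recording, for each $\mathcal{C}$, the cyclic subgroups in the $\phi$-cycle of components through $A_{\mathcal{C}}$ then yields the required properly invariant cyclic subgroup system for $\mathcal{C}$. (Alternatively, computing $\hat\Lambda_{k+1}$ and the component-bijection $\widehat{\phi}_{k+1,k}$ afforded by stability matches $\mathcal{C}$ with $\mathcal{C}'$ graphically, with $d$ the associated covering degree.)
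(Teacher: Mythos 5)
There is a genuine gap: your proposal never actually produces the computable bound $k$, which is the entire content of the proposition. You correctly observe that Lemma \ref{lem:SIexistence} is only an existence statement (the component count of $\hat\Lambda_j$ can stay constant and later drop, and stability of a given $\hat\Lambda_k$ is a condition quantified over all $i>k$, so it cannot be checked by inspecting finitely many iterates), but the step you offer in its place --- ``an effective version of Mutanguha's structural result [Proposition 5.1.5], using bounded cancellation, bounds an iterate by which every stable overlap has appeared and every transient overlap has been absorbed'' --- is precisely the statement to be proved, not an ingredient you may cite. The paper is explicit that Mutanguha's Proposition 5.1.5 is not algorithmic and that its algorithmic version is exactly what Proposition \ref{prop:JPMremark} supplies; your own text concedes the point by calling this quantification ``the main obstacle'' and leaving it unresolved. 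Without that bound, the final paragraph also does not close the argument: the exhaustive search for $\mathcal{C}'$ and $d$, and the alternative of comparing $\hat\Lambda_{k+1}$ with $\hat\Lambda_k$ via the ``component-bijection afforded by stability'', both presuppose that you already know $\hat\Lambda_k$ is stable.

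For comparison, the paper's proof obtains $k$ explicitly and elementarily from the expansion property, with no appeal to bounded cancellation or to Proposition 5.1.5 beyond its cyclicity input. With $N$ the number of edges of $\Delta$ and $k_1=\max(k_0,1+M)$, the expansion property gives that $f^n(e)$ has at least $i$ edges once $n>N\log_2(i)$; one computes the roots $r_i$ of the components of $\hat\Lambda_{k_1}$, their maximal length $L$ and the count $S$ of closed subpaths of their cyclic permutations, and sets $k=k_1+(1+2N)+N\lceil\log_2(L(S+1))\rceil$. Two pigeonhole arguments (a repeated $\Delta$-edge among $c,\widetilde\phi(c),\dots,\widetilde\phi^{2N}(c)$, then a repeated closed subpath among $S$ candidates) force $\widetilde\phi^{t}(r_i)\sim r_i^{s}$ with $1\le t\le 2N$ and $s>1$, which simultaneously yields the properly invariant cyclic subgroup systems and the stability (or emptiness) of $\hat\Lambda_k$. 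Your proposal identifies the right objects ($k_0$, the cyclic components, their roots, the carrying maps) but the quantitative pigeonhole argument that turns them into an algorithm is missing, so the proof does not go through as written.
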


\begin{proof}
Let
%$N = N(\Delta)$
$N = N(\Delta)$
be the number of edges in $\Delta$.
Let $k_0 = 2(\rk(F)-1)^2$ and
%$M = M(\Delta) \leq 2 \rk(F) - 2$
$M = M(\Delta)$
be the number of vertices in $\Delta$.
Let $k_1 = \max(k_0, 1+M)$ and $N_1 = N_1(f, \Psi)$ be the number of components in $\hat \Lambda_{k_1}$.
If $N_1 = 0$, then set $k = k_1$ and the result holds vacuously (output the empty cyclic subgroup system).

Note that for any edge $e$ in $\Delta$, the edge-path $f^N(e)$ has at least $2$ edges as $\widetilde{\phi}=(f, \Psi)$ is an expansion.
Therefore, for all $i\in\mathbb{N}$, if $n > N*\log_2(i)$, then $f^n(e)$ has at least $i$ edges.

Assume $N_1 > 0$.
Any component $C_i \in \hat \Lambda_{k_1}$ is cyclic (as $k_1 \geq k_0$ \cite[Lemma 5.1.4]{Mutanguha2021dynamics}) and hence orientable.
Fix an orientation on the components of $\hat \Lambda_{k_1}$.
From the definition of $\hat\Lambda_{k_1}$ in terms of pullback maps, an oriented component $C_i \in \hat \Lambda_{k_1}$ is an ordered pair $(c_i, c_i')$ of oriented immersed loops in $(\Delta, \mathcal{D})$.
For $n >  0$, any component $C$ of $\hat \Lambda_{k_1+n}$ is again cyclic and an oriented $C$ is a pair $(c, c')$ of oriented immersed loops in $(\Delta, \mathcal{D})$ such that
there exists a component $C_i=(c_i, c_i')$ of $\hat \Lambda_{k_1}$ and exponents $d = d(C)$, $d' = d'(C)$ such that
$\widetilde{\phi}^n(c) \sim c_i^d$ and $\widetilde{\phi}^n(c') \sim (c_{i}')^{d'}$
%for some $i = i(C) ( \leq N_1 )$, $d = d(C)$, $d' = d'(C)$,
i.e. the component $C$ of $\hat \Lambda_{k_1+n}$ covers a component of $\hat \Lambda_{k_1}$.
As $c, c'$ are immersed loops in $(\Delta, \mathcal{D})$ and as $\widetilde{\phi}$ is an immersion, $c_i^d$ and $(c_i')^{d'}$ are also immersed loops in $(\Delta, \mathcal{D})$ and so $\widetilde{\phi}^n(c)$ is a cyclic permutation of $c_i^d$, and $\widetilde{\phi}^n(c')$ is a cyclic permutation of $(c_i')^{d'}$.
By reversing the orientation on $C$ if necessary, assume $d > 0$.
The loops $c$ and $c_i$ in $(\Delta, \mathcal{D})$ are not in a vertex rose (as $k_1 > M$, this follows from the third paragraph in the proof of \cite[Proposition 5.1.5]{Mutanguha2021dynamics}), i.e. each has a $\Delta$-edge.
%\footnote{When I write $f^n(\sigma)\sim c_i^d$, I'm abusing notation since $\sigma$ is a loop in $(\Delta, \mathcal{D})$. What I really mean is $(f,\Psi)^n(\sigma)$ is homotopic to $c_i^d$. For any immersed loop $\sigma$ in $(\Delta, \mathcal{D})$, $f^n(\sigma)$ is an immersed loop as $(f, \Psi)$ is an immersion. So $f^n(\sigma)\sim c_i^d$ is equivalent to say the two oriented immersed loops are cyclic permutations of each other. Thinking of loops in $\hat \Lambda_k$ as pairs of loops in the graph of roses $(\Delta, \mathcal{D})$ is what I meant earlier in our emails when I suggested you work with "$(\Delta, \mathcal{D})$-labelled" graphs. It's possible because $(f, \Psi)$ is an immersion. When you think of the pullbacks as F-labelled graphs, then loops in $\hat \Lambda_k$ are pairs of loops in a folded-graph that varies with $k$. This doesn't help us because I want to iterate by $(f, \Psi)$ to find the properly invariant cyclic subgroup system.}
Let $r_i$ be the root of $c_i$, $i = 1,\ldots,N_1$ (i.e. $c_i=r_i^{p_i}$, $p_i>0$ maximal).
Let $L_i = L(r_i)$ be the length of $r_i$ (in $\Delta$) and $L = \max(L_1, \ldots, L_{N_1})$.
By the above, for $n > 0$ and any component $C = (c, c')$ of $\hat \Lambda_{(k_1+n)}$, $\widetilde{\phi}^n(c)$ is some cyclic permutation of a positive power of $r_i$, where $i = i(C)$ and $C$ is oriented appropriately.
Moreover, if $q > 0$, $t > 0$, $n - t > N*\log_2(L(q+1))$, and $e$ is an oriented $\Delta$-edge in $\widetilde{\phi}^t(c)$, then the oriented edge-path $\widetilde{\phi}^{n-t}(e)\subseteq \widetilde{\phi}^n(c)\subseteq(\Delta, \mathcal{D})$ contains $r_i^q$ as a subpath.
Let $S_i = S(r_i)$ be the number of (nontrivial) closed subpaths in all cyclic permutations of $r_i$ and $S = 1 + \max(S_1, \ldots, S_{N_1})$.
Let $k = k_1 + (1+2N) + N*\lceil\log_2(L(S+1))\rceil$; this is the computable number $k = k(f, \Psi)$.

Let $C = (c, c')$ be a given component of $\hat \Lambda_k$.
Let $i = i(C) \leq N_1$, $d = d(C) > 0$ (pick right orientation on $C$ for this), and $n = k - k_1$. 
By the pigeonhole principle, at least two of the oriented immersed loops $c, \widetilde{\phi}(c), \ldots, \widetilde{\phi}^{2N}(c)$ in $(\Delta, \mathcal{D})$ share some oriented $\Delta$-edge $e$ (recall that $c$ contains a $\Delta$-edge as $k > M$ and so do all its $\widetilde{\phi}$-iterates as $\widetilde{\phi}=(f, \Psi)$ is an expansion).
Assume the oriented $\Delta$-edge $e$ is contained in $\widetilde{\phi}^{t_1}(c)$ and $\widetilde{\phi}^{t_2}(c)$ for some integers $0 \leq t_1 < t_2 \leq 2N$. 
Let $0 < t = t_2 - t_1 \leq 2N$.
Since $n - t_2 > N*\log_2(L(S+1))$, the oriented edge-path $\widetilde{\phi}^{n-t_2}(e)$ contains $r_i^S$.
There is a segment $p$ in $e$ such that $\widetilde{\phi}^{n-t_2}(p) = r_i^S$.
The oriented edge-path $\widetilde{\phi}^t(r_i^S) =  \widetilde{\phi}^t(\widetilde{\phi}^{n-t_2}(p)) = \widetilde{\phi}^{n-t_1}(p)$  is a closed subpath of $\widetilde{\phi}^{n-t_1}(\widetilde{\phi}^{t_1}(c)) = \widetilde{\phi}^n(c) \sim c_i^d$.
It follows that, for $1 \leq j \leq S$, $\widetilde{\phi}^t(r_i^j)$ is a closed subpath of $\widetilde{\phi}^n(c)$ and $\widetilde{\phi}^n(c)$ is a positive power of some cyclic permutation $\overline{r}_i$ of $r_i$.
There are nontrivial closed subpaths $e_j$  of $\overline{r}_i$ and positive integers $d_j$ such that $\widetilde{\phi}^t(r_i^j)e_j = \overline{r}_i^{d_j}$ and $d_1 < d_2 < \ldots < d_S$.
By definition of $S$ and the pigeonhole principle, there is some $j < j' \leq S$ for which $e_j = e_j' = E$. Therefore, $\widetilde{\phi}^t(r_i^j)E =\overline{r}_i^{d_j}$, $\widetilde{\phi}^t(r_i^{j'})E = \overline{r}_i^{d_{j'}}$, and $\widetilde{\phi}^t(r_i)^{j'-j} = \widetilde{\phi}^t(r_i^{j'-j}) = \overline{r}_i^{d_{j'}-d_j} \sim r_i^{d_{j'}-d_j}$. 
By minimality of roots, $\widetilde{\phi}^t(r_i) \sim r_i^{s}$ for some $s>1$; the power $s$ is proper since $\widetilde{\phi}=(f, \Psi)$ is an expansion and $r_i$ is not in a vertex rose.
So $r_i$ determines a conjugacy class of a properly invariant cyclic subgroup system in $F$ and this concludes the proof.
\end{proof}

This result has the following algebraic interpretation, which is what we require in the sequel.

\begin{lemma}
\label{prop:JPM5.5}
Suppose $\phi$ is an injective endomorphism of $F$. There is a computable positive integer $k = k(\phi)$ such that if $c \in F$ is carried by a component of $\hat \Lambda_k$, then $\phi^{t_c}(c) \sim c^{d_c}$ for some integers $1 \leq t_c \leq 6 \rk(F) - 6$ and $2 \leq d_c \leq |\phi|^{6 \rk(F) - 6}$.
\end{lemma}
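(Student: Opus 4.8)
The proof of Lemma \ref{prop:JPM5.5} should be essentially a translation of Proposition \ref{prop:JPMremark} from the language of graphs of roses to the language of $F$ itself, together with a bookkeeping argument that pins down the explicit bounds on $t_c$ and $d_c$. First I would run the algorithm of Theorem \ref{thm:JPM} on $\phi$ to obtain a graph of roses $(\Delta, \mathcal{D})$, a homotopy equivalence $\alpha$, and an automorphic expansion $\widetilde{\phi}$ representing $\phi$; feeding $\widetilde{\phi}$ into Proposition \ref{prop:JPMremark} produces the number $k = k(f,\Psi)$ and, for each component of $\hat\Lambda_k$, a properly invariant cyclic subgroup system for the induced endomorphism of $F$. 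This number $k$ is the required $k(\phi)$, and $\hat\Lambda_k$ is stable or empty by construction. The content of the lemma is then: if $c\in F$ is carried by a component $\mathcal{C}$ of $\hat\Lambda_k$, what does stability of $\mathcal{C}$ tell us about the $\phi$-orbit of $c$?

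The key translation step is the following. A component $\mathcal{C}$ of $\hat\Lambda_k$ is cyclic (as $k \geq k_0$), so it corresponds, via the $F$-labelling, to a conjugacy class of a cyclic subgroup $\langle r \rangle \leq F$, where $r$ is the root; $c$ being carried by $\mathcal{C}$ means $c$ is conjugate to a power of $r$. From the proof of Proposition \ref{prop:JPMremark} we extract that $\widetilde\phi^{\,t}(r_i) \sim r_i^s$ for a proper power $s > 1$, where $t = t_2 - t_1 \leq 2N$; but $N$ here is the number of edges of $\Delta$, which I need to convert into a bound in terms of $\rk(F)$ alone. Since $(\Delta, \mathcal{D})$ is constructed from $R_F$ and is $\pi_1$-isomorphic to it, its Euler characteristic is $1 - \rk(F)$, and one can bound the total number of edges of $\Delta$ by (a small multiple of) $\rk(F)$; tracking the constants carefully should yield $t_c \leq 6\rk(F) - 6$. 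Pushing $\widetilde\phi^{\,t}(r) \sim r^s$ back through $\alpha$ and the groupoid isomorphism $\mu$ gives $\phi^{t_c}(c') \sim (c')^{s}$ for a root $c'$ of the conjugacy class, hence $\phi^{t_c}(c) \sim c^{d_c}$ after raising to the appropriate power; here $d_c$ is (a power of) $s$, bounded above by the number of times $r$ fits into $\widetilde\phi^{\,t_c}(r)$, which is at most the length-expansion of $\phi^{t_c}$, giving $d_c \leq |\phi|^{t_c} \leq |\phi|^{6\rk(F)-6}$, and $d_c \geq 2$ since $s$ is a proper power.

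The main obstacle I anticipate is not the existence part — that is immediate from Proposition \ref{prop:JPMremark} — but making the explicit bounds $1 \leq t_c \leq 6\rk(F)-6$ and $2 \leq d_c \leq |\phi|^{6\rk(F)-6}$ come out cleanly. Proposition \ref{prop:JPMremark} only asserts the \emph{existence} of a properly invariant cyclic subgroup system and a computable $k$; it does not advertise a bound on the period $t$ in terms of $\rk(F)$. So I would need to revisit the pigeonhole step in its proof — "at least two of $c, \widetilde\phi(c), \ldots, \widetilde\phi^{2N}(c)$ share an oriented $\Delta$-edge" — and argue that the relevant count $2N$, where $N$ is the number of $\Delta$-edges, is itself controlled by $\rk(F)$ for the graph of roses output by Theorem \ref{thm:JPM}. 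If the expansion's underlying subcomplex $\Delta$ can a priori have more than $O(\rk(F))$ edges, I would instead phrase the period bound purely in terms of the number of components $N_1$ of $\hat\Lambda_k$ (which is at most $\rk(F)$, since these are the non-contractible pullback components and the pullback of $\widehat{\phi^i}$ with itself has first Betti number bounded in terms of $\rk(F)$), rerunning the pigeonhole on the orbit of the \emph{root} $r$ among the at most $N_1$ available roots rather than on edges; this should be what brings the constant down to a linear function of $\rk(F)$ and hence justify the stated $6\rk(F)-6$.
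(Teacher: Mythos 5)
Your proposal follows essentially the same route as the paper's proof: obtain $k$ from Proposition \ref{prop:JPMremark}, extract from its proof the relation $\phi^{t}(r)\sim r^{d}$ for the root $r$ with $1\leq t\leq 2N$, bound $N$ (the number of edges of $\Delta$) by $3\rk(F)-3$ via the blow-up of $(\Delta,\mathcal{D})$, and get $d\leq|\phi|^{t}$ by comparing $|\phi^{t}(r)|\leq|r|\cdot|\phi|^{t}$ with $|r^{d}|=|r|\cdot d$ for the cyclically reduced root. Your worry about $\Delta$ having too many edges does not materialise (the paper's bound is exactly the blow-up edge count, matching your Euler-characteristic estimate), so the fallback pigeonhole on the $N_1$ components is not needed.
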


\begin{proof}
Compute an automorphic expansion $(f, \Psi)$ defined on the graph $(\Delta, \mathcal{D})$ representing $\phi$.
Let $k$ be the integer output by Proposition \ref{prop:JPMremark} and let $k_1$ be as in the proof of Proposition \ref{prop:JPMremark}, so $k_1 = \max(k_0, 1+M)$ where $k_0 = 2(\rk(F)-1)^2$ and $M$ is the number of vertices in $\Delta$.
The proof shows that $k_1\leq k$,
%Let $1 \leq k_1 =  k_1(F) \leq k = k(f, \Psi)$ be the computable integers from the proof of Proposition \ref{prop:JPMremark}.
so letting $n = k - k_1$ we have $0 \leq n < k$.

Let $c$ be an element in $F$ carried by a component of $\hat \Lambda_k$.
So there are elements $x$ and $x'$ in $F$ such that $c \sim \phi^k(x) \sim \phi^k(x')$ but $\phi^{k-1}(x)$ and $\phi^{k-1}(x')$ are not conjugate.
Let $r$ be the root of the cyclic reduction of $\phi^n(x)$, and let $N$ be the number of edges in $\Delta$ (as in the proof of Proposition \ref{prop:JPMremark}).
Note that $N$ is less than or equal to the number of edges in the blow-up of the graph of roses $(\Delta, \mathcal{D})$, and so $N\leq 3 \rk(F) - 3$.
Then $\phi^t(r) \sim r^d$ for some $1 \leq t \leq 2N\leq6 \rk(F) - 6$ and $d \geq 2$, by the proof of Proposition \ref{prop:JPMremark}.
So $\phi^t(c) \leq c^d$ since $c \sim \phi^k(x) = \phi^{k_1}(\phi^n(x)) \sim \phi^{k_1}(r)$.
Then $|\phi^t(r)| \leq |r| * |\phi|^t$. Yet $|\phi^t(r)| = |r^d| = |r| * d$ as $r$ is cyclically reduced. 
So $d \leq |\phi|^t \leq |\phi|^{6 \rk(F) - 6}$ as needed.
\end{proof}

\subsection{Elements carried by $\hat \Lambda_k$}
We now give an algorithm to determine if some endomorphic image $\phi^i(w)$ of an element $w\in F$ is carried by the stable iterate $\hat \Lambda_k$.
An element of $F$ is \emph{primitive} if it is contained in a free basis of $F$.
We begin with a strong result which says that if an element is mapped to a proper power of itself by endomorphisms of $F$, up to conjugacy, then it is primitive in $F$.
The stronger result regarding subgroups, that if an element $x\in H\leq F$ is a proper power in $F$ but not in $H$ then $x$ is primitive in $H$, is true for subgroups $H$ of rank two (due independently to Baumslag and Steinberg \cite{Baumslag1965Residual, Steinberg1971equations}) but false in general \cite{Rosenberger1988minimal}.
As the Baumslag--Solitar subgroups $\operatorname{BS}(1, d)$, $d\geq1$, correspond precisely to identities of the form $\phi^t(c)\sim c^d$ \cite[Theorem 5.3.5]{Mutanguha2021dynamics}, this lemma can be thought of as classifying how such subgroups algebraically occur in ascending HNN-extensions of free groups.

A subgroup $H$ of a group $G$ is \emph{malnormal} if for $g\in G$, $g^{-1}H g\cap H\neq1$ implies that $g\in H$.
It is a standard fact that maximal cyclic subgroups of free groups are malnormal.

\begin{lemma}
\label{lem:primitivity}
Suppose $\phi$ is an injective endomorphism of $F$.
If $\phi^t(c)\sim c^d$ with $d>1$ and $c\in F$ non-trivial, then $c$ is a power of a primitive element of $F$.
\end{lemma}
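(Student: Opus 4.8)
The plan is to exploit the fact that $\phi^t(c) \sim c^d$ with $d > 1$ forces a strong self-similarity on the cyclic word of $c$, and to combine this with the injectivity of $\phi$ to rule out everything except powers of primitives. First I would pass to the root: write $c = r^p$ with $r$ not a proper power, so that $\phi^t(r)^p = \phi^t(c) \sim c^d = r^{pd}$, and since a free group has unique roots, $\phi^t(r) \sim r^d$; thus it suffices to show $r$ is primitive. So from now on assume $c = r$ is not a proper power and $\phi^t(r) \sim r^d$ with $d > 1$. The key invariant to track is the rank of the free factor generated (up to conjugacy) by $r$ — more precisely, I would use the Stallings-graph / $F$-labelled graph machinery already set up in the paper, or alternatively a topological argument via the mapping torus.

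The cleanest route I can see uses the ascending HNN-extension $G = F \ast_\phi$ itself. In $G$ we have the defining relation $t^{-1} x t = \phi(x)$, so $\phi^t(r) = t^{-t} r \, t^{t}$, and the hypothesis $\phi^t(r) \sim r^d$ in $F$ (hence in $G$) gives $t^{-t} r^{} t^{t} \sim_G r^d$, i.e. there is $g \in G$ with $g^{-1} r g = t^{-t} r^d t^{t}$, equivalently $(t^t g)^{-1} r (t^t g) \cdot$ — rearranging, the element $h := t^t g$ satisfies $h^{-1} r^{} h = r^{d} \cdot (\text{conjugate})$; after absorbing conjugators one gets a Baumslag--Solitar relation $h^{-1} r h = r^{d}$ up to conjugacy in $G$, realising $\BS(1,d)$ (or $\BS(d,1)$). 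But $G$ acts on its Bass--Serre tree with cyclic-by-nothing structure and, crucially, $F \leq G$ is a free group in which $r$ lives; I would instead argue directly in $F$: the relation $\phi^t(r) \sim r^d$ means the subgroup $\langle r \rangle$ and $\langle \phi^t(r) \rangle = \phi^t(\langle r\rangle)$ are conjugate, with $\phi^t$ injective, so $\phi^t$ restricts to a conjugation-composed-with-$x\mapsto x^d$ map on $\langle r\rangle$.

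The actual engine I would use is the following: let $\langle\langle r \rangle\rangle$ denote the smallest free factor of $F$ (up to conjugacy) containing a conjugate of $r$ — equivalently work with the algebraic extension / the Stallings graph $\Gamma_r$ of $\langle r\rangle$ and track how $\phi^t$ acts on the relevant covering data. Since $\phi^t(r)$ is conjugate to $r^d$ and $r^d \in \langle r \rangle$, the core graph of the pullback relating $\langle r \rangle$ to $\phi^t(\langle r\rangle)$ is again a circle, and the induced map is (up to cyclic permutation) a genuine covering of the circle of degree $d$. The point is that $r$ being a power of a primitive is equivalent to $\langle r\rangle$ having a cyclic algebraic closure that is a free factor, and this is preserved/forced by the covering structure: if $r$ were \emph{not} a power of a primitive, then $\langle r \rangle$ would sit inside $F$ with a Stallings graph whose "outer" structure obstructs the existence of an injective $\phi$ realising $r \mapsto$ (conjugate of $r^d$). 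I expect the main obstacle is precisely making this last implication rigorous — one must show that an injective endomorphism sending a conjugate of $r$ to $r^d$ cannot exist unless $r$ is (a power of) a primitive, and I would do this by a rank/Euler-characteristic count on the mapping torus graph of roses, or by invoking co-Hopfian-type obstructions: the inclusion $\langle r^d \rangle \hookrightarrow \langle r\rangle$ is a proper finite-index-style inclusion of cyclic groups, and pulling it back through $\phi^t$ while staying inside $F$ pins down $r$ up to primitivity. The delicate book-keeping with conjugators and with "up to conjugacy" versus "on the nose" is where the real work lies; the rest is soft.
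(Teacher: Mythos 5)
There is a genuine gap: after the (correct) reduction to the root $r$ of $c$, your argument never actually proves the implication that matters. Everything you write after that point — the $\BS(1,d)$ subgroup of $F\ast_\phi$, the observation that $\phi^t$ carries $\langle r\rangle$ into a conjugate of $\langle r\rangle$ with ``covering degree'' $d$, the reformulation of primitivity via algebraic closures — is a restatement of the hypothesis or of the conclusion, not a bridge between them. Indeed the $\BS(1,d)$ subgroup and the degree-$d$ circle covering exist for \emph{any} nontrivial $r$ with $\phi^t(r)\sim r^d$, whether or not $r$ is a power of a primitive, so neither can detect primitivity; and your proposed ``rank/Euler-characteristic count'' or ``co-Hopfian-type obstruction'' is exactly the statement to be proved, for which no argument is given. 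You acknowledge this yourself (``this last implication \ldots is where the real work lies''), but that implication is the entire content of the lemma, so the proposal does not constitute a proof.

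The missing idea in the paper's argument is to remove the conjugacy first and then invoke a theorem on equations in free groups. One composes $\phi^t$ with an inner automorphism to get an injective endomorphism $\psi$ with $\psi(c_0)=c_0^{d}$ on the nose, where $c_0$ is the root of $c$. Writing $c_0$ as a word in a basis $a_1,\dots,a_r$ and applying $\psi$ gives the identity $c_0(\psi(a_1),\dots,\psi(a_r))=c_0^{d}$; since $\psi$ is injective, the elements $\psi(a_1),\dots,\psi(a_r)$ freely generate a rank-$r$ subgroup, so the assignment $X_i\mapsto\psi(a_i)$, $Y\mapsto c_0$ is a solution of the equation $c_0(X_1,\dots,X_r)=Y^{d}$ whose components generate a subgroup of rank $r$. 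Because $d>1$, a theorem of Steinberg on such equations then forces $c_0$ to be primitive. This is where injectivity is genuinely used (to control the rank of the subgroup generated by the solution), which is precisely the leverage your graph-theoretic sketch lacks; if you want to salvage your approach, you would need a substitute for Steinberg's theorem, not more bookkeeping about conjugators.
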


\begin{proof}
As we are working up to conjugacy, we may assume that $c$ is cyclically reduced.
Decompose $c$ as $c_0^{p}$ for $p>0$ maximal, which gives $\phi^t(c_0^{p})\sim c_0^{p d}$.
As roots are unique in free groups, we therefore have $\phi^t(c_0)\sim c_0^{d}$, $d>1$.
Then $\langle c_0\rangle$ is maximal cyclic in $F$, and hence malnormal in $F$.
Note that as $c$ is cyclically reduced, $c_0$ is cyclically reduced.
We may replace $\phi$ with $\psi:=\phi^t\chi$ for some inner automorphism $\chi$ such that $\psi(c_0)= c_0^d$.
%, and by multiplying $\psi$ by some inner automorphism we may assume that $\psi(c_0)\sim c_0^d$ in $\im(\psi)$, written $\psi(c_0)\sim_{\im(\psi)} c_0^d$.

For $\psi: F\to F$ our injective endomorphism, consider the graph of roses $(\Delta, \mathcal{D})$ and associated homotopy equivalence $\alpha$ and automorphic expansion $\widetilde{\psi}=(f,\Psi)$ given by Theorem \ref{thm:JPM}.
%We view the conjugacy class $[c_0]$ as an (unbased) loop $\gamma_{c_0}$ in $(\Delta, \mathcal{D})$.
%
%Let $\widetilde{\phi}=(f, \Psi)$ be an automorphic expansion defined on the graph $(\Delta, \mathcal{D})$ representing $\phi$.
As $\widetilde{\psi}$ is an expansion, there exists a number $m\gg1$ such that in $\widetilde{\psi}^{m}(\Delta, \mathcal{D})$, every edge of $f(\Delta)=\Delta$ which is traversed by some immersed loop $\gamma$
%corresponding to an $\im(\psi)$-conjugacy class $[c_0^{i}]_{\im(\psi)}$
has a label which contains $c_0^2$ (up to inversion).
Let $\ell>0$ be minimal such that $c_0^{\ell}\in\im(\psi^m)$, and write $\gamma$ for the immersed (unbased) loop in $\widetilde{\psi}^{m}(\Delta, \mathcal{D})$ which corresponds to $[c_0^{\ell}]$.
%$[c_0^{\ell}]_{\im(\psi)}$.
%As $\psi^t(c_0)\sim_{\im(\psi)} c_0^{d}$, there exists some minimal $k$ such that $c_0^k\sim_{\im(\psi)}\psi^{m}(c_0)$; we view the ${\im(\psi)}$-conjugacy class $[c_0^k]_{\im(\psi)}$ as an immersed loop $\gamma$ in $\widetilde{\psi}^{m}(\Delta, \mathcal{D})$.

The loop $\gamma$ does not contain any edge $e$ of $\Delta$ twice.
We prove this algebraically, using the fact that, as noted above, $\langle c_0\rangle$ is malnormal in $F$.
Decompose $\gamma$ as $\gamma_0e\gamma_1e^{\epsilon}\gamma_2$, $\epsilon=\pm1$, which gives a labeling of $\gamma$ of the form $[g_0c_0^2g_1c_0^{\epsilon 2}g_2]$.
Then $[c_0^\ell]=[g_0c_0^2g_1c_0^{\epsilon 2}g_2] = [c_0^{\epsilon 2}g_2g_0c_0^2g_1]$, and so $c_0^{\epsilon 2}g_2g_0c_0^2g_1$ must be a proper power of $c_0^{\epsilon}$.
As $\langle c_0\rangle$ is malnormal and abelian, it follows that $\epsilon=1$.
The two occurrences of $e$ have labels $h_0c_0^ih_1$ and $k_0c_0^ik_1$, respectively, and malnormality of $\langle c_0\rangle$ gives us that $k_0=h_0$ and $k_1=h_1$.
It follows that we can shortcut the $\gamma_1$ part of the loop: $\gamma_0e\gamma_2$ is an immersed loop in $\widetilde{\psi}^{m}(\Delta, \mathcal{D})$, which corresponds to $[c_0^{\ell'}]$ with $0<{\ell'}<\ell$.
This contradicts the minimality of $\ell$.
We therefore conclude that $\gamma$ does not contain any edge $e$ of $\Delta$ twice, as claimed.

Now consider the blow-up $\top(\Delta, \mathcal{D})$ of $(\Delta, \mathcal{D})$, with the labeling obtained from $\widetilde{\psi}^{m}(\Delta, \mathcal{D})$.
By the above paragraph, there exists a simple closed curve $\delta\subset \top(\Delta, \mathcal{D})$ and an immersed loop $\gamma'\subset \top(\Delta, \mathcal{D})$ with label $c_0^{\ell}$ such that $\delta\subset\gamma'$ and $\gamma'$ contains each edge of $\delta$ precisely once; here $\gamma'=\top(\gamma)$ and $\delta$ corresponds to the path in $\Delta$ traversed by $\gamma$.
As $c_0^{\ell}\in\im(\psi)$, there exists a vertex $u$ of $\top(\Delta, \mathcal{D})$ such that $\pi_1(\top(\Delta, \mathcal{D}), u)=\im(\psi^m)$ and such that $\gamma'$ based at $u$ is precisely $c_0^{\ell}$ under this equality.
Consider a spanning tree $T$ of $\top(\Delta, \mathcal{D})$; this gives a basis $\mathcal{B}$ for $\pi_1(\top(\Delta, \mathcal{D}), u)$ consisting of simple closed curves, each containing a unique edge of $\top(\Delta, \mathcal{D})\setminus T$.
%With the given labeling, this corresponds to $\im({\psi}^{m})$.
As $\delta$ is a simple closed curve, there exists some edge $e$ from $\delta$ which is not contained in $T$; let $\delta'\in \mathcal{B}$ be the unique element of this basis which contains the edge $e$.
As $\gamma'$ contains the edge $e$ precisely once, its decomposition as a product of elements from $\mathcal{B}$ contains the element $\delta'$ precisely once; this means that we may replace $\delta'$ in the basis $\mathcal{B}$ with $\gamma'$ to get a new basis.
It follows that $\gamma'$ is primitive in $\pi_1(\top(\Delta, \mathcal{D}), u)$, and hence $c_0^{\ell}$ is primitive in $\im(\psi^m)$.
%Therefore, some element $c_1^{\ell}\in [c_0^{\ell}]=[\psi^{m}(c_0)]$ is primitive in $\im(\psi^{m})$.
%As this is a $\im(\psi^m)$-conjugacy class, $c_0^{\ell}$ itself is primitive in $\im(\psi^{m})$.
%, and so $c_0^{\ell}$ is primitive in $\im(\psi^{m})$.
As $\psi(c_0)$ is equal to some power of $c_0$,
we have that $\psi^{m}(c_0)$ equals some power of $c_0$, which in turn gives us that $\psi^{m}(c_0^\ell)$ is some power of a primitive in $\im(\psi^{m})$.
As $\psi$ is injective, some power of $c_0$ is a power of a primitive in $F$.
As $c_0$ has no proper roots, it must itself be primitive in $F$.
The result follows.
\end{proof}

%This gives the following result.
The following proof uses
the free factor systems
of Bestvina--Feighn--Handel
\cite[Section 2.6]{Bestvina2000Tits}:
A \emph{nontrivial free factor system} of $F$ is a subgroup system $\mathcal{A} = \{A_1, \ldots, A_l\}$ of $F$ such that each $A_i$ is a free factor of $F$, $A_i \cap A_j$ is trivial if $i \neq j$, and $\langle A_1, \ldots , A_l\rangle$ is a free factor of $F$.
Note that $\langle A_1, \ldots , A_l\rangle\cong A_1\ast\cdots \ast A_l$.

\begin{lemma}
\label{lem:p-equals-0-subgroup-membership}
There exists an algorithm which determines for a given element $w\in F$ and a given monomorphism $\phi:F\to F$, if there exists some $i\in\mathbb{N}$ such that $\phi^i(w)$ is carried by the stable iterate $\hat \Lambda_k$.
\end{lemma}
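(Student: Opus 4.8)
The plan is to compute the stable iterate $\hat\Lambda_k$ explicitly, use the proper invariance it carries to turn the statement into a bounded search, and run that search with the Stallings-graph machinery.

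\emph{Step 1: computing $\hat\Lambda_k$.} Apply Lemma~\ref{lem:SIexistence} together with Proposition~\ref{prop:JPMremark} (the latter to an automorphic expansion $(f,\Psi)$ of $\phi$, which is computable by Theorem~\ref{thm:JPM}), and translate the resulting $F$-labelled graph into a cyclic subgroup system using the theory of Stallings graphs \cite{Kapovich2002Stallings}. This produces $\hat\Lambda_k$ as an explicit cyclic subgroup system $\mathcal A=\{A_1,\dots,A_l\}$ of $F$, together with the roots $r_1,\dots,r_m$ of its components and the proper-invariance data $\phi^{t_j}(r_j)\sim r_j^{s_j}$ with $s_j\ge 2$. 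If $\hat\Lambda_k$ is empty, or $w$ is trivial, the answer is immediate.

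\emph{Step 2: reduction to a bounded search.} Proper invariance of $\mathcal A$ says $\phi$ carries each component into a conjugate of a component, so if $\phi^i(w)$ is carried by $\hat\Lambda_k$ then so is $\phi^{i+1}(w)=\phi(\phi^i(w))$; hence $S:=\{i\in\mathbb N:\phi^i(w)\text{ carried by }\hat\Lambda_k\}$ is a terminal segment of $\mathbb N$. It therefore suffices to exhibit a computable bound $B=B(\phi,w)$ with $\min S\le B$ whenever $S\ne\emptyset$: the algorithm then tests, for each $i\le B$, whether $\langle\phi^i(w)\rangle$ is conjugate into some $A_j$ — a finite Stallings-graph computation — and returns $\mathrm{yes}$ if one of these tests succeeds and $\mathrm{no}$ otherwise.

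\emph{Step 3: the bound (the crux).} It remains to bound $\min S$. Suppose $\phi^{i_0}(w)$ is carried, with $i_0=\min S$. By Lemma~\ref{prop:JPM5.5}, $\phi^{t}(\phi^{i_0}(w))\sim(\phi^{i_0}(w))^{d}$ for some $t\le 6\rk(F)-6$ and $2\le d\le|\phi|^{6\rk(F)-6}$, so by Lemma~\ref{lem:primitivity} $\phi^{i_0}(w)$ is a power of a primitive element, and, being carried, it is conjugate to a power of one of the roots $r_j$; writing $w=w_0^e$ with $w_0$ the root of $w$, this forces the root of $\phi^{i_0}(w_0)$ to be conjugate to some $r_j$. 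Since $\phi$ permutes the conjugacy classes $[r_1],\dots,[r_m]$ up to taking powers, the set of $i$ with $\mathrm{root}(\phi^i(w_0))$ conjugate to some $r_j$ is again a terminal segment $\{i\ge i_1\}$; and once $i\ge i_1$ we have $\phi^i(w)\sim r_j^{M_i e}$ for a fixed $r_j$ (cycling through the finite periodic orbit) whose exponent $M_i$ is multiplied by the factors $s_j$ as $\phi$ is iterated, so whether $\phi^i(w)$ is actually carried — i.e.\ whether $M_i e$ is divisible by the index of some component sitting at $r_j$ — is eventually periodic in $i$ and decidable over one period. So the whole difficulty reduces to bounding $i_1$, equivalently to bounding how long an iterate of $w$ can fail to have its root in the finite $\phi$-periodic set $\{[r_1],\dots,[r_m]\}$ while some later iterate does. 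I expect to get this bound from the expansion property of $(f,\Psi)$: off the (degenerate) stable subgraph $f$ expands the number of underlying-subcomplex edges in any non-collapsing iterate at a definite exponential rate, whereas an iterate whose root is conjugate to some $r_j$ has a tightly constrained form in $(\Delta,\mathcal D)$ (each $r_j$ being an explicit immersed loop with a $\Delta$-edge), and combining these with pigeonhole on the bounded data attached to $w$ — a Brinkmann-type argument — should pin $i_1$, hence $B$, down to a computable function of $\phi$ and $|w|$. Making this last growth estimate precise is the main obstacle.
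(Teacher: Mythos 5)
Your Steps 1--2 set up the right reduction (compute $\hat\Lambda_k$ with its roots $r_j$ and the data $\phi^{t_j}(r_j)\sim r_j^{s_j}$; then it suffices to bound the first $i$ for which $\phi^i(w)$ is carried), but the crux is exactly the part you leave unproven: you never produce the computable bound $B$ (equivalently $i_1$), you only say you ``expect'' to get it from the expansion property plus a pigeonhole/Brinkmann-type estimate, and you acknowledge this is ``the main obstacle.'' That is a genuine gap, and it is not clear the sketched route closes it: the event ``the root of $\phi^i(w)$ first becomes conjugate to some $r_j$'' is not governed by how fast $f$ expands $\Delta$-edges (those estimates control loops already known to lie in the iterated pullbacks, as in Proposition \ref{prop:JPMremark}), but by where $w$ sits relative to iterated $\phi$-preimages of the cyclic subgroups $\langle r_j\rangle$, which word-growth alone does not see. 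A secondary soft spot: your claim in Step 2 that $S$ is a terminal segment of $\mathbb{N}$ does not follow from the stated proper invariance, which is only through $\phi^{t_j}$; the intermediate images $\phi(r_j),\dots,\phi^{t_j-1}(r_j)$ need not themselves be carried by $\hat\Lambda_k$. (This is harmless for your architecture, since only a bound on $\min S$ is needed, but it should not be asserted as a consequence of invariance.)

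The paper's proof eliminates the unbounded quantifier by a different mechanism that avoids any growth estimate, and you already had the key ingredient in hand: by Lemma \ref{prop:JPM5.5} and Lemma \ref{lem:primitivity}, each component representative is a power of a primitive element $a$, so $A=\langle a\rangle$ is a cyclic \emph{free factor} with $\phi^{t}(A)$ carried by $A$. Injectivity then makes the preimage $\phi^{-t}(A)$ a free factor system, and iterating preimages gives an ascending chain of cyclic free factor systems, which must stabilise within $\rk(F)$ steps to a computable system $\overline{\mathcal{A}}$ with associated free factor $F_A$; the existence of \emph{some} $i$ with $\phi^i(w)$ carried by the component then reduces to checking whether one of finitely many initial iterates of $w$ is carried by $F_A$, a Stallings-graph computation. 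In other words, the quantifier over $i$ is removed by pulling the carrying condition backwards through $\phi$ along free factors, not by bounding forward iteration; to repair your proposal you would either need to supply the missing quantitative bound (nontrivial, and not obviously true by your method) or switch to this preimage-chain argument after your use of Lemma \ref{lem:primitivity}.
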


\begin{proof}
The stable iterate $\hat \Lambda_k$ has only finitely many components, and these are all loops.
As we can compute $\hat \Lambda_k$, for each component $\mathcal{C}$ of $\hat \Lambda_k$ we can find a representative $c\in F$, so closed loops in $\mathcal{C}$ correspond to powers of $c$.
It is sufficient to prove the result for a fixed component $\mathcal{C}$, with representative $c$, i.e. provide an algorithm to determine if there exists some $i\in\mathbb{N}$, $j\in\mathbb{Z}$ such that $\phi^i(w)\sim c^j$.

By Lemma \ref{prop:JPM5.5}, there are computable numbers $t$ and $d$ such that $\phi^t(c)\sim c^d$, so then by Lemma \ref{lem:primitivity} we have that $c=c_0^{\ell}$ for some primitive element $a$ of $F$ and some $\ell>0$, so $A:=\langle a\rangle$ is a free factor of $F$.
The identity $\phi^t(c)\sim c^d$ also implies that $\phi^{t}(a)^{\ell}\sim a^{\ell d}$, so by applying the fact that free groups have unique roots we have $\phi^{t}(a)\sim a^{d}$.
Therefore, $\phi^t(A)$ is carried by $A$.
Then its preimage $\phi^{-t}(A)$ is a factor system $\mathcal{A}_1$ (since $\phi^{t}$ is injective), consisting of $A$ and a bunch of cyclic free factors whose images under $\phi^t$ are carried by $A$ \cite[p10]{Mutanguha2021dynamics}.
Note that if $\phi^k(x)$ is carried by the free factor $F_{\mathcal{A}}$ corresponding to $\mathcal{A}$, then $x$ is carried by the free factor $F_{\mathcal{A}_1}$ corresponding to $\mathcal{A}_1$.
Keeping taking preimages gives an ascending chain of free factor systems $\mathcal{A}_1\geqslant \mathcal{A}_2\geqslant\cdots$, where elements $A_j$ of $\mathcal{A}_{i}$ are also components of $\mathcal{A}_{i+1}$, and if $\phi^{k}(x)$ is carried by the free factor $F_{\mathcal{A}_i}$ corresponding to $\mathcal{A}_i$, then $x$ is carried by the free factor $F_{\mathcal{A}_{i+1}}$ corresponding to $\mathcal{A}_{i+1}$
As each $\mathcal{A}_i$ is a cyclic free factor system, it can contain at most $\rk(F)$ components, so this chain must eventually stabilise.
If $\mathcal{A}_i=\mathcal{A}_{i+1}$ then this chain has stabilised; hence it must stabilise in at most $\rk(F)$ steps.
This means that we can compute the free factor system $\overline{\mathcal{A}}=\{A_1, \ldots, A_m\}$ at which this chain stabilises.
Write $F_A$ for the associated free factor, and we have that if $\phi^{kj}(x)$ is carried by $F_{\mathcal{A}}$ then $x$ is carried by $F_A$.
This last condition means that if $\phi^i(w)$ is carried by the component $\mathcal{C}$ of $\hat \Lambda_k$, then one of $w, \phi(w), \ldots, \phi^{k-1}(w)$ is carried by the free factor $F_A$.

The algorithm required for the theorem is therefore to loop over the components of $\hat \Lambda_k$, and for each component $\mathcal{C}$ compute the free factor system $\overline{\mathcal{A}}$.
This gives us a basis for $F_A$, and we can use this basis to determine if any of $w, \phi(w), \ldots, \phi^{k-1}(w)$ are carried by $F_A$ (for example, via Stallings' graphs), and by the above this is equivalent to the property we are seeking to determine.
\end{proof}

%%%%%%%%%%%%%%%%%%%%%%%%%%%%%%%%%%%%%%%%%%%%%
%%%%%%%%%%%%%%%%%%%%%%%%%%%%%%%%%%%%%%%%%%%%%
%------------------------------------------------------------------------------------------------------------------------
%------------------------------------------------------------------------------------------------------------------------
%---------------------------------------------------Generalising Brinkmann----------------------------------------
%------------------------------------------------------------------------------------------------------------------------
%------------------------------------------------------------------------------------------------------------------------
%%%%%%%%%%%%%%%%%%%%%%%%%%%%%%%%%%%%%%%%%%%%%
%%%%%%%%%%%%%%%%%%%%%%%%%%%%%%%%%%%%%%%%%%%%%

\section{Generalising Brinkmann's conjugacy algorithm}
\label{sec:prelim}
We now prove the generalisation of Brinkmann's conjugacy algorithm required for Theorem \ref{thm:UltraTwCP}.
Brinkmann's algorithm outputs some $p\in\mathbb{Z}$ such that $\phi^p(u)\sim v$, but if $\phi$ is non-surjective then $\phi^{p}$ is only an endomorphism of $F$ if $p\geq0$.
We in fact prove two generalisations of this algorithm, which are two different ways of overcoming the $p<0$ issue.
Firstly, we consider one exponent $p\in\mathbb{N}$ and the identity $\phi^p(u)\sim v$, as Brinkmann did.
Secondly, we consider two exponents $p,q\in\mathbb{N}$ and the identity $\phi^p(u)\sim\phi^q(v)$.
We apply the first generalisation to prove the second, main generalisation.
%This case is not needed for automorphisms, as it rearranges to give $\phi^{p-q}(u)\sim v$.

\subsection{Direct generalisation of Brinkmann's conjugacy algorithm}
We now replace the ``$p\in\mathbb{Z}$'' in Brinkmann's conjugacy algorithm with $p\in\mathbb{N}$; this is a direct generalisation of Brinkmann's algorithm.

\begin{lemma}
\label{lem:p-equals-0}
There exists an algorithm with input an injective endomorphism $\phi$ of a free group $F$ and two elements $u, v\in F$, and output either an exponent $p\in \mathbb{N}_0$ such that $\phi^p(u)\sim v$,
or $\mathrm{no}$ if no such exponent exists.
\end{lemma}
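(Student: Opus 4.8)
The plan is to compute, from $\phi$, $u$ and $v$, a bound $P^{\ast}\in\mathbb{N}_0$ such that if $\phi^{p}(u)\sim v$ for some $p\in\mathbb{N}_0$ then this already holds for some $p\leq P^{\ast}$; the algorithm then decides $\phi^{p}(u)\sim v$ for each $p\in\{0,\dots,P^{\ast}\}$ (the Conjugacy Problem for $F$ being decidable) and outputs such a $p$, or $\mathrm{no}$. Since the conjugacy class of $\phi^{p}(u)$ depends only on that of $u$ we may take $u$ and $v$ cyclically reduced, and assume $u\neq 1$ (otherwise just test $v=1$). Because $\phi^{p}(u)\sim v$ forces $\phi^{p}(u)$ and $v$ to have the same cyclic length, it suffices to control the growth, and the eventual periodicity, of the sequence $\big([\phi^{p}(u)]\big)_{p\geq 0}$.

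First I would apply Lemma \ref{lem:p-equals-0-subgroup-membership} to decide whether some iterate $\phi^{i}(u)$ is carried by the stable iterate $\hat\Lambda_{k}$. If so, search for the least such $i$ (for each candidate one tests whether $\phi^{i}(u)$ is conjugate to a power of a chosen representative of a component of $\hat\Lambda_{k}$); by Lemma \ref{prop:JPM5.5} this $\phi^{i}(u)$ is conjugate to $c^{j}$ where $c$ is a root with $\phi^{t}(c)\sim c^{d}$ for computable $t$ and $d\geq 2$. Then $\phi^{\,i+st}(u)\sim c^{\,jd^{s}}$, so the cyclic length of $\phi^{p}(u)$ is bounded below by an explicit function of $p$ tending to infinity; take $P^{\ast}$ beyond the point at which this exceeds the cyclic length of $v$.

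Otherwise no iterate of $u$ is carried by $\hat\Lambda_{k}$, and this is the main case: the orbit of $u$ may still either grow or be eventually periodic --- for $\phi\colon a\mapsto aba^{-1},\ b\mapsto b$ the stable iterate is empty, yet the orbit of $ab$ grows while that of $a$ is eventually constant --- so the two must be separated effectively. For this I would use the automorphic expansion $\widetilde{\phi}=(f,\Psi)$ on $(\Delta,\mathcal{D})$ of Theorem \ref{thm:JPM}. The key claim is that, since the stable subgraph of an expansion is degenerate, the only bounded-orbit behaviour of $\phi$ comes from the $\Psi$-periodic vertex roses, and on each such rose $\Psi^{m}$ (where $m$ is the common period of the periodic vertices) restricts, after the conjugation needed to return the rose to itself, to an automorphism $\psi_{R}$ of a subgroup $L_{R}\leq F$ that is a free factor of $F$; $m$, $L_{R}$ and $\psi_{R}$ are all computable from $(f,\Psi)$. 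For each of the finitely many periodic roses one decides, by the ascending-chain-of-free-factor-systems argument from the proof of Lemma \ref{lem:p-equals-0-subgroup-membership} (the chain stabilising in at most $\rk(F)$ steps, since $\phi^{m}(L_{R})$ is carried by $L_{R}$), whether some $\phi^{i}(u)$ is conjugate into $L_{R}$, and if so finds such an $i$. When $\phi^{i}(u)$ is conjugate into $L_{R}$, the tail $\big([\phi^{p}(u)]\big)_{p\geq i}$ is governed by $\psi_{R}$ --- using that finitely generated free factors are conjugacy-closed and that injective endomorphisms of $F$ are quasi-isometric embeddings, so cyclic lengths of the intermediate $\phi^{s}$-images ($0\leq s<m$) are comparable to those inside $L_{R}$ --- so it suffices to handle the automorphism $\psi_{R}$: if the $\psi_{R}$-orbit of the relevant class is eventually periodic (equivalently periodic, as $\psi_{R}$ is an automorphism), read off a period $\pi$ and take $P^{\ast}$ beyond $i+m\pi$; otherwise cyclic lengths grow at a rate computable from a train-track representative of $\psi_{R}$, and $P^{\ast}$ follows by comparison with $v$. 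This automorphism case is exactly Brinkmann's conjugacy algorithm together with routine bookkeeping for the sign of the exponent, which one may invoke as a black box. Finally, if no $\phi^{i}(u)$ is conjugate into any $\Psi$-periodic vertex rose, then every $\widetilde{\phi}^{\,p}$-image of the loop representing $u$ retains a non-periodic edge of the underlying subcomplex; since $\widetilde{\phi}$ is an expansion, the cyclic length of $\phi^{p}(u)$ then grows at least linearly at a computable rate, and $P^{\ast}$ again follows by comparison with $v$. Setting $P^{\ast}$ to the maximum of the bounds from all cases completes the argument.

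The hard part will be this last case: making precise, with effective constants, that the bounded part of the dynamics of $\phi$ lives on the $\Psi$-periodic vertex roses of the automorphic expansion --- checking that the two sub-cases (eventually conjugate into some periodic rose, or forever retaining a non-periodic edge of the underlying subcomplex) are exhaustive, carrying out the basepoint and conjugation bookkeeping needed to extract $\psi_{R}$ and to transport the orbit of $u$ into $L_{R}$, and reading off computable lower bounds on growth from the expansion property and from the train-track data of the $\psi_{R}$. It is there, rather than in the clean $\hat\Lambda_{k}$ case, that essentially all of the work lies.
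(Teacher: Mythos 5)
Your blueprint uses the same toolkit as the paper (Mutanguha's automorphic expansion, the stable iterate $\hat\Lambda_k$ with Lemma \ref{prop:JPM5.5}, and Brinkmann's conjugacy algorithm on the $\Psi$-periodic vertex roses), but it is organised quite differently: you track the orbit of $u$ and try to extract a computable bound $P^{\ast}$ on the exponent from effective growth estimates, whereas the paper needs no growth estimates at all. The paper checks all $p\le m$ directly for a computable $m$, and for $p>m$ observes that $\phi^p(u)\sim v$ forces the \emph{fixed target} loop $\gamma_v$ to lie in $\widetilde\phi^{\,p}(\Delta,\mathcal D)$, hence (expansion) inside a vertex rose, and (via Lemma \ref{prop:JPM5.5}, ruling out the $\hat\Delta_k$-carried alternative) inside a $\Psi$-periodic rose, where the problem becomes an automorphism problem solved by Brinkmann together with a Feighn--Handel periodicity test. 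So the dichotomy in the paper is about where $v$ can live for large $p$, not about how fast $u$ grows; this is what makes the paper's argument short.

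Two points in your proposal are genuine gaps rather than deferred routine work. First, the ``routine bookkeeping for the sign of the exponent'' is not routine: Brinkmann's algorithm, used as a black box, returns some $p\in\mathbb Z$, and when it returns a negative exponent you must still decide whether a non-negative one exists, which amounts to deciding whether the conjugacy class of (the image of) $v$ is periodic under the relevant automorphism $\psi_R$. That is a separate algorithmic ingredient --- the paper invokes Feighn--Handel \cite[Remark 9.5, Lemma 9.6(2)]{Feighn2018algorithmic} precisely for this --- and nothing in your write-up supplies it (your alternative, a computable growth lower bound for non-periodic $\psi_R$-orbits read off a train-track representative, is itself unproved and is essentially the hard content of Brinkmann's paper, so it cannot double as the fix while Brinkmann is only being used as a black box). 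Second, the chain of effectivity claims your bound $P^{\ast}$ rests on --- computable Lipschitz comparison between $\Delta$-edge counts of immersed graph-of-roses loops and cyclic length in $F$ through the marking $\alpha$ (note the paper's warning that these immersions live on the Bass--Serre tree, not the blow-up), computable quasi-isometry constants for $\phi$, and the extension of the preimage/ascending-chain argument of Lemma \ref{lem:p-equals-0-subgroup-membership} from cyclic to arbitrary free factor systems --- are all asserted by analogy; you rightly flag them as ``the hard part'', but as written they are exactly the parts that would have to be proved, and the paper's route shows they can be avoided entirely.
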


\begin{proof}
Let $\phi: F\to F$ be an injective endomorphism, and consider the graph of roses $(\Delta, \mathcal{D})$ and associated homotopy equivalence $\alpha$ and automorphic expansion $\widetilde{\phi}=(f,\Psi)$ given algorithmically by Theorem \ref{thm:JPM}.
We can compute which of the finitely many vertex roses of $(\Delta, \mathcal{D})$ are $\Psi$-periodic, as for example the period is bounded by $d$, the number of roses in the finite graph $\mathcal{D}$.

For $w\in F$, we view the conjugacy class $[w]$ as an (unbased) loop $\gamma_w$ in $(\Delta, \mathcal{D})$.
As $\widetilde{\phi}$ is an expansion, there exists a computable number $m_v$ such that if $\gamma_v$ is a loop in $\widetilde{\phi}^{j}(\Delta, \mathcal{D})$ for $j>m_v$ then $\gamma_v$ is wholly contained a vertex rose $R$ of $\mathcal{D}$.
Define $m:=\max(m_v, k, d)$, where $k$ is the constant from Lemma \ref{prop:JPM5.5} and $d$ is the number of roses in the finite graph $\mathcal{D}$.
As $m$ is computable, we are concerned with determining if there exists some $p\geq m$ such that $\phi^p(u)\sim v$.
If $\gamma_v$ is not in any $\Psi$-periodic vertex rose, then there is no $p\geq m$ such that $\gamma_v$ is a translate of $\gamma_{\phi^p(u)}$, and hence no $p\geq m$ such that $\phi^p(u)\sim v$.
Suppose therefore that $\gamma_v$ is in a $\Psi$-periodic vertex rose $R$.
This assumption implies that $\gamma_v$ is contained in $\im(\widetilde{\phi}^j)$ for all $j\geq0$, and so if there exists some $p\geq m$ such that $\gamma_v$ is a translate of $\gamma_{\phi^p(u)}$, then there exists some $\gamma_{v_0}$ contained in some periodic vertex rose $R'$ such that $\widetilde{\phi}^p(\gamma_{v_0})=\gamma_{\phi^p(v_0)}$ is a translate of $\gamma_{\phi^p(u)}$.
Assuming that $\gamma_{v_0}$ is not a translate of $\gamma_u$ and that $p\geq m$ is minimal with this condition, we see that $\phi^p(v_0)$ is carried by $\hat{\Lambda}_p$, and as $p\geq m\geq k$ it is therefore carried by $\hat{\Lambda}_k$.
Lemma \ref{prop:JPM5.5} gives us that $\phi^t(v_0)\sim v_0^d$ for some $d\geq2$, but as $\gamma_{v_0}$ is in the $\Psi$-periodic vertex rose $R'$, where $\widetilde{\phi}$ acts homotopically, this is not possible.
This contradiction means that there exists some $j<m$ such that $\gamma_{\phi^j(v_0)}$ is a translate of $\gamma_{\phi^j(v_0)}$, and for all $p\geq m$ the loop $\gamma_{\phi^p(u)}$ is contained in a $\Psi$-periodic vertex rose, up to translation.
Suppose therefore that $\gamma_u$ eventually lands in the same $\Psi$-periodic vertex rose as $\gamma_v$, and write $i \geq 0$ for the smallest integer such that $v$ and $\phi^i(u)$ are represented by unbased loops in the same $\Psi$-periodic vertex rose $R$.
Note that there are finitely many such $R$, and that we can compute them all.

Fix one such $R$, and let $\ell \geq1$ be its $\Psi$-period.
As $\widetilde{\phi}$ is automorphic, i.e. its restriction to the periodic vertex roses is a homotopy equivalence, the problem of determining if there exists some $p\geq m$ such that $\phi^p(u)\sim v$ reduces to determining if there exists any $p'\in\mathbb{N}_0$ such that the loop $\Psi^{\ell p'+i}(\gamma_u)$ is homotopic to $\gamma_v$.
Brinkmann's conjugacy algorithm determines if there exists an exponent $p_0\in\mathbb{N}_0$ such that either
$\Psi^{\ell p_0+i}(\gamma_u)$ is homotopic to $\gamma_v$
or
$\Psi^i(\gamma_u)$ is homotopic to $\Psi^{\ell p_0}(\gamma_v)$
\cite[Theorem 6.2]{Brinkmann2010Detecting}.
Suppose Brinkmann's conjugacy algorithm outputs some $p_0\in\mathbb{N}_0$ such that $\Psi^i(\gamma_u)$ is homotopic to $\Psi^{\ell p_0}(\gamma_v)$.
There additionally exists some $p_1\in\mathbb{N}_0$ with $\Psi^{\ell p_1+i}(\gamma_u)$ homotopic to $\gamma_v$ if and only if there exists some $p_2\in\mathbb{Z}\setminus\{0\}$ such that $\Psi^{\ell p_2}(\gamma_{v})$ is homotopic to $\gamma_{v}$. For one direction take $p_2:=p_0+p_1$, while for the other take $p_1:=j|p_2|-p_0$ for any large enough $j$ (e.g. $j:=2p_0$ works).

Therefore, our algorithm is as follows:
Run the algorithm of Mutanguha (Theorem \ref{thm:JPM}), and compute the number $m$.
Determine for all $p\leq m$ if $\phi^p(u)\sim v$ in $F$, and if so output the relevant exponent $p$.
If no such exponent exists, then
first find all the $\Psi$-periodic vertex roses of $\mathcal{D}$.
Loop across these roses $R$.
For each such $R$, find its $\Psi$-period $\ell$ and determine if it contains $\gamma_u$, and find all $i\leq \ell$ such that $\gamma_v$ is in $\Psi^i(R)$.
Loop across these indices $i$.
For each such $i$, determine via Brinkmann's conjugacy algorithm if there exists an exponent $p_0\in \mathbb{N}_0$ such that either
$\Psi^{\ell p_0+i}(\gamma_u)$ is homotopic to $\gamma_v$
or
$\Psi^i(\gamma_u)$ is homotopic to $\Psi^{\ell p_0}(\gamma_v)$.
If $\Psi^{\ell p_0+i}(\gamma_u)$ is homotopic to $\gamma_v$ then we have $\phi^{\ell p_0+i}(u)\sim v$, so output $p:=p_0$.
If $\Psi^i(\gamma_u)$ is homotopic to $\Psi^{\ell p_0}(\gamma_v)$ then determine, via an algorithm of Feighn and Handel \cite[Remark 9.5, Lemma 9.6(2)]{Feighn2018algorithmic}, if there exists an exponent $p_2\in\mathbb{Z}\setminus\{0\}$ such that $\Psi^{\ell p_2}(\gamma_{v})$ is homotopic to $\gamma_{v}$, and if so output $p:=\ell p_0(2|p_2|-1)+i$ and terminate the algorithm.
If both loop ends without outputting any exponent $p$, then output ``$\mathrm{no}$'' as there is no exponent $p\in \mathbb{N}_0$ such that $\phi^p(u)\sim v$ in $F$.
\end{proof}

\subsection{Two-sided generalisation of Brinkmann's conjugacy algorithm}
\label{sec:Brinkmann}
We now build on Lemma \ref{lem:p-equals-0} to give a second generalisation of Brinkmann's conjugacy algorithm,
which is the version appearing in Theorem \ref{thm:UltraTwCP}.
This versions solves equations of the form $\phi^p(u)\sim\phi^q(v)$ for $p, q\geq0$.

\p{Conjugator not in \boldmath{$\im(\phi)$}}
We start with an algorithm to solve $x^{-1}\phi^p(u)x=\phi^q(v)$ for a conjugator $x\not\in\im(\phi)$, and to do this we apply Section \ref{sec:Stabilisation}.
Firstly, we give the algorithm for $p, q$ larger than a computable constant.

For a number $i\in\mathbb{N}_0$, we write $\mathbb{N}_{< i}$ for the set of natural numbers strictly less than $i$, and $\mathbb{N}_{\geq i}$ for the set of natural numbers greater than or equal to $i$.

\begin{lemma}
\label{lem:large-p-q}
There exists an algorithm with input an injective, non-surjective endomorphism $\phi$ of a free group $F$ and two elements
$u, v\in F$,
and output a number $d\in\mathbb{N}_0$ and either a pair $(p, q)\in \mathbb{N}_{\geq d}\times\mathbb{N}_{\geq d}$ with $p\geq q\geq d$ such that
there exists some element $x\in F\setminus\im\phi$ satisfying $x^{-1}\phi^p(u)x=\phi^q(v)$
or $\mathrm{no}$ if no such pair exists.
\end{lemma}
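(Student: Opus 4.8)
The plan is to set $d := k$, the computable constant furnished by Proposition \ref{prop:JPMremark}, for which $\hat\Lambda_k$ is stable or empty, and to reduce the existence of a conjugator outside $\im(\phi)$ to data visible on this stable iterate. Write $\operatorname{cr}(w)$ for the cyclically reduced form of $w\in F$. First I would isolate the governing equivalence: \emph{for $p\geq q\geq k$ with $\phi^p(u)\neq 1$, there exists $x\in F\setminus\im(\phi)$ with $x^{-1}\phi^p(u)x=\phi^q(v)$ if and only if $\phi^p(u)\sim\phi^q(v)$ in $F$ and $\operatorname{cr}(\phi^p(u))$ is carried by $\hat\Lambda_k$.} Trivial elements are dealt with first and separately: if $u=1$ then the displayed identity forces $v=1$, and then any $x$ conjugates, so choosing $x\notin\im(\phi)$ (possible as $\phi$ is non-surjective) we output $(k,k)$; if exactly one of $u,v$ is trivial we output $\mathrm{no}$. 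The forward direction of the equivalence is the algebraic observation underlying $\hat\Lambda_j$: since $\phi^p(u)=\phi^q(\phi^{p-q}(u))\in\phi^q(F)$ and $\phi^q(v)=x^{-1}\phi^p(u)x\in\phi^q(F)$, we get $\langle\phi^p(u)\rangle\leq\phi^q(F)\cap\phi^q(F)^{x^{-1}}$ with $x^{-1}\notin\im(\phi)$, so $\operatorname{cr}(\phi^p(u))$ is carried by $\hat\Lambda_q$, hence by $\hat\Lambda_k$ because $\hat\Lambda_k$ carries $\hat\Lambda_q$. (Carrying depends only on the conjugacy class, so this simultaneously records that $\operatorname{cr}(\phi^q(v))$ is carried by $\hat\Lambda_k$.)

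For the converse direction — the real content — suppose $\phi^p(u)\sim\phi^q(v)$ and $\operatorname{cr}(\phi^p(u))$ is carried by a component of $\hat\Lambda_k$. By Lemma \ref{prop:JPM5.5} and Lemma \ref{lem:primitivity}, $\operatorname{cr}(\phi^p(u))$ is then a power of a primitive $a$ with $\phi^{t}(a)\sim a^{D}$ for some $D\geq 2$, and consequently $\sqrt{\phi^p(u)}$ is a conjugate of $a^{\pm1}$. Now take any conjugator $x_0$ from $\phi^p(u)$ to $\phi^q(v)$; the set of all conjugators is the coset $x_0\langle\sqrt{\phi^p(u)}\rangle$, which is a coset of an infinite cyclic group, and it meets $F\setminus\im(\phi)$ precisely when $\langle\sqrt{\phi^p(u)}\rangle\not\subseteq\im(\phi)$, i.e.\ when $a$ is not conjugate into $\im(\phi)$. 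So I would prove that the primitive root attached to a stable component of $\hat\Lambda_k$ cannot be conjugated into $\im(\phi)$, using the stability of the iterate together with the properness $D\geq 2$ of the multiplier (equivalently, the proper invariance in Proposition \ref{prop:JPMremark}); with this in hand the coset escapes $\im(\phi)$ and the required $x$ exists.

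Granting the equivalence, the algorithm runs as follows. Compute $\hat\Lambda_k$ (and confirm via Lemma \ref{lem:SIexistence}/Proposition \ref{prop:JPMremark} that it is stable or empty) and, for each of its finitely many loop components, a representative together with its primitive root $a_s$ and the bounded exponents $t_s$, $D_s\geq 2$ with $\phi^{t_s}(a_s)\sim a_s^{D_s}$. Use Lemma \ref{lem:p-equals-0-subgroup-membership} to decide whether some $\phi^i(u)$ is carried by $\hat\Lambda_k$; if not, output $d=k$ and $\mathrm{no}$, since by the forward direction any solution would force this. Otherwise, unwinding the stabilising chain of free factor systems in the proof of Lemma \ref{lem:p-equals-0-subgroup-membership}, together with uniqueness of roots and the bounds of Lemma \ref{prop:JPM5.5}, gives an explicit eventually periodic description: for all large $i$, $\operatorname{cr}(\phi^i(u))$ is a signed power $a_{\sigma(i)}^{\pm\mu_i}$ of one of the computed primitives, with $\sigma$ eventually periodic and $\mu_i$ growing geometrically (ratio $D_{\sigma(i)}$) along each residue class, and similarly for $v$. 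The relation $\phi^p(u)\sim\phi^q(v)$ then becomes the requirement that the attached primitives agree up to conjugacy and inversion and that the signed exponents coincide, a decidable linear‑plus‑geometric Diophantine condition on $(p,q)$; we search it for a pair with $p\geq q\geq k$, output $(k,p,q)$ if one is found and $(k,\mathrm{no})$ otherwise. (Restricting to $p\geq q$ costs nothing for a caller, as the relation is symmetric in $(p,u)\leftrightarrow(q,v)$.)

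The main obstacle is the converse direction of the equivalence, and specifically the claim that the primitive root attached to a stable component of $\hat\Lambda_k$ is not conjugate into $\im(\phi)$: this is the point at which the distinction between $\im(\phi)$ and $\im(\phi^q)$ must be handled with care, and a naive descent argument $\phi^{t+1}(b_0)\sim\phi(b_0^{D})\Rightarrow\phi^{t}(b_0)\sim b_0^{D}$ does not obviously terminate, so one has to exploit the stability of the iterate and the properness $D\geq 2$ more subtly. A secondary technical point is extracting the exact eventually periodic form of $[\operatorname{cr}(\phi^i(u))]$ from Lemma \ref{lem:p-equals-0-subgroup-membership}: this is essentially bookkeeping, combining its free‑factor‑chain argument with uniqueness of roots and the bounded multipliers of Lemma \ref{prop:JPM5.5}, but it must be done explicitly enough to make the final Diophantine search effective.
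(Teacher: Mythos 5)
Your forward direction (a conjugator outside $\im\phi$ forces $\phi^p(u)$, and hence $\phi^q(v)$, to be carried by $\hat\Lambda_q$ and so by $\hat\Lambda_k$) matches the paper's negative-case argument and is fine. The genuine gap is the converse of your governing equivalence, which is the entire content of the lemma and which you only announce a plan for. Worse, the sub-claim you propose to prove --- that the primitive root $a$ attached to a stable component of $\hat\Lambda_k$ is never conjugate into $\im(\phi)$ --- is false. Take $F=F(x,y)$ and $\phi(x)=x^2$, $\phi(y)=yxy^{-1}$. This is injective (the images do not commute, so they form a basis of a rank-two subgroup) and non-surjective ($x\notin\langle x^2,yxy^{-1}\rangle$). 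For every $k$ the subgroup $\phi^k(F)=\langle x^{2^k},\,\phi^{k-1}(y)x^{2^{k-1}}\phi^{k-1}(y)^{-1}\rangle$ meets its conjugate by $g_k:=\phi^{k-1}(y)x$ nontrivially (the intersection contains $\phi^k(y)^2$), and $g_k\notin\im\phi$ since $\phi^{k-1}(y)\in\im\phi$ while odd powers of $x$ are not in $\im\phi$; so at every level there is a component of $\hat\Lambda_k$ carrying powers of $x$, and (since being in the image of $\widehat{\phi}_{i,j}$ for one $i$ gives it for all smaller $i$) such a component is stable. Its primitive root is $x$ up to conjugacy, with $\phi(x)\sim x^2$, yet $x\sim\phi(y)\in\im(\phi)$. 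So the coset-of-conjugators argument cannot be run off properties of the component's root alone; one would have to argue about the specific root $\sqrt{\phi^p(u)}$ and the specific coset, and (as the example $u=v=y^2$, $p=q=1$ shows, where all conjugators are powers of $\phi(y)\in\im\phi$) a fixed-$(p,q)$ characterisation of ``conjugate with conjugator outside $\im\phi$'' is delicate; you give no proof that it holds in the range $p\geq q\geq k$.

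The paper sidesteps this entirely: it never certifies, for a produced pair, that the conjugator can be taken outside $\im\phi$ at that fixed $(p,q)$. It uses carriedness only in the negative direction (your forward implication), and in the positive case it computes the first carried indices $i,j$ and the multipliers $t_u,d_u,t_v,d_v$ of Lemma \ref{prop:JPM5.5}, sets $d:=\max(d_ud_v,i,j)$ (not $d:=k$), and uses the relations $\phi^{t_u+i}(u)\sim\phi^i(u)^{d_u}$ to write $\phi^p(u)$, for $p\geq d$, as an explicit power of one of finitely many computable elements; the problem then reduces to a root-conjugacy check plus the integer equation $\gamma\alpha^a=\delta\beta^b$. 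Your ``eventually periodic description'' and ``linear-plus-geometric Diophantine condition'' are the analogous step, but you assert rather than derive them, and with your fixed choice $d=k$ the exponents between $k$ and the onset of that periodic behaviour are not covered by your search. As it stands the proposal is missing the central argument, and its key intermediate claim needs to be replaced, not just proved.
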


\begin{proof}
Start the algorithm by computing the integer $k = k(\phi)$ and the iterated pullback $\hat \Lambda_k$, given to us by Lemma \ref{prop:JPM5.5}.
Recall that $\hat\Lambda_k$ carries $\hat\Lambda_l$ for all $l\geq k$, and that $\hat\Lambda_k$ consists of disjoint loops.
Next, we use Lemma \ref{lem:p-equals-0-subgroup-membership} to determine if there exists any $i, j$ such that $\phi^i(u), \phi^j(v)$ are carried by $\hat\Lambda_k$, which we may do as $\hat\Lambda_k$ consists of disjoint loops.
As $\hat\Lambda_k$ carries $\hat\Lambda_l$ for all $l\geq k$, if $\phi^i(u)$ or $\phi^j(v)$ is not carried by $\hat\Lambda_k$ then it is not carried by $\hat\Lambda_l$ for any $l\geq k$.

Suppose no $\phi^i(u)$, $i\geq0$, is carried by any component of $\hat \Lambda_k$.
If there exists $p\geq q$ and some $x\in F\setminus\im\phi$ satisfying $x^{-1}\phi^p(u)x=\phi^q(v)$, then as $p\geq q$ we have $\phi^{p}(u)\in\im\phi^q$, and so $\phi^{p}(u)$ is carried by $\hat \Lambda_q$.
Hence $q< k$, so output $d:=k$ and $\mathrm{no}$.

Suppose no $\phi^j(v)$, $j\geq0$, is carried by any component of $\hat \Lambda_k$.
If there exists $p\geq q$ and some $x\in F\setminus\im\phi$ satisfying $x^{-1}\phi^p(u)x=\phi^q(v)$, then as $p\geq q$ this rewrites to $\phi^{q}(\phi^{p-q}(u))\sim\phi^{q}(v)$, and so $\phi^{q}(v)$ is carried by $\hat \Lambda_q$.
Hence $q< k$, so output $d:=k$ and $\mathrm{no}$.

Suppose that $i, j\geq0$ are such that $\phi^i(u)$ and $\phi^j(v)$ are each carried by some component of $\hat \Lambda_k$.
Compute the constants $d_u, d_v, t_u, t_v$ given to us by Lemma \ref{prop:JPM5.5}, so $\phi^{t_u+i}(u)\sim\phi^i(u)^{d_u}$ and $\phi^{t_v+j}(v)\sim\phi^j(v)^{d_v}$.
We set $d:=\max(d_ud_v, i, j)$, which is therefore computable.
We can then rewrite $\phi^p(u)$ as follows, where $i\leq r_p<d$ and $n_u=d_u^{d_v}$:
\begin{align*}
\phi^p(u)
&=\phi^{ad+r_p}(u)\\
&=\phi^{r_p}(\phi^{ad}(u))\\
&=\phi^{r_p}(u)^{n_u^a}
\end{align*}
As there are only finitely many choice for the remainder $r_p$, we see that as $p$ varies, $\phi^p(u)=u_l^{n_u^a}$ where $u_l:=\phi^l(u)$ is one of finitely many, computable elements $u_1, \cdots, u_d\in F$.
Similarly, we can decompose $q$ as $jd+r_q$, where $0\leq r_q<d$, and we have $\phi^q(v)=\phi^{r_q}(v)^{n_v^b}$ and so $\phi^q(v)=v_m^{n_v^b}$ where $v_m:=\phi^m(v)$ is one of finitely many, computable elements $v_1, \cdots, v_d\in F$.
Therefore, if there exists a pair $(p, q)\in \mathbb{N}_{\geq d}\times\mathbb{N}_{\geq d}$ such that $p\geq q\geq d$ and $\phi^p(u)\sim\phi^q(v)$ then there exist indices $l, m$ such that $u_l^{n_u^a}\sim v_m^{n_v^b}$ and $ad+l\geq bd+m$.
Here, the lists $(u_l), (v_m)$ and the exponents $n_u, n_v$ are computed from the pair $u, v\in F$, and so only the exponents $a, b$ can vary.
On the other hand, if there exist indices $l, m$ and exponents $a, b\in\mathbb{Z}$ such that $u_l^{n_u^a}\sim v_m^{n_v^b}$ and $ad+l\geq bd+m$, then
\[
\phi^{ad+s_l}(u)=u_l^{n_u^a}\sim v_m^{n_v^b}=\phi^{bd+s_m}(v).
\]
Therefore, there exists a pair $(p, q)\in \mathbb{N}_{\geq d}\times\mathbb{N}_{\geq d}$ such that $\phi^p(u)\sim\phi^q(v)$ with $p\geq q\geq d$ if and only if there exist indices $l, m$ and exponents $a, b\in\mathbb{Z}\setminus\{0\}$ such that $u_l^{n_u^a}\sim v_m^{n_v^b}$ and $ad+l\geq bd+m$.

Therefore, simplifying our notation, to prove the theorem it is sufficient to give an algorithm which determines for any given pair of words $x, y\in F$ and pair of integers $\alpha, \beta\in\mathbb{Z}\setminus\{0\}$, if there exist $a, b\in\mathbb{Z}$ such that $x^{\alpha^a}\sim y^{\beta^b}$ under the constraint $ad+l_x\geq bd+m_y$, where $l_x$ and $m_y$ depend on $x$ and $y$ respectively.
This algorithm begins by first finding elements $x_0, y_0\in F$ such that $x_0^{\gamma}=x$ and $y=y_0^{\delta}$ with $\gamma, \delta$ maximal, and then determines if $x_0$ and $y_0$ are conjugate; if they are not conjugate then there are no $a, b$ such that $x^{\alpha^a}\sim y^{\beta^b}$. If $x_0\sim y_0$ then $x^{\alpha^a}\sim y^{\beta^b}$ if and only if $\gamma \alpha^a=\delta \beta^b$ as integers, and we can decide this integer problem subject to the given constraint (e.g. by considering prime factorisation). We therefore have our required algorithm, and the result follows.
\end{proof}

We now combine the algorithm of Lemma \ref{lem:large-p-q} with finitely many applications of the algorithm of Lemma \ref{lem:p-equals-0} to obtain the following.

\begin{lemma}
\label{lem:PowSuperTwCPTECHNICAL}
There exists an algorithm with input an injective, non-surjective endomorphism $\phi$ of a free group $F$ and two elements
$u, v\in F$,
and output either a pair $(p, q)\in \mathbb{N}_{0}\times\mathbb{N}_{0}$ with $p\geq q\geq 0$ such that
there exists some element $x\in F\setminus\im\phi$ satisfying $x^{-1}\phi^p(u)x=\phi^q(v)$
or $\mathrm{no}$ if no such pair exists.
\end{lemma}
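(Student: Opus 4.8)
The plan is to use Lemma~\ref{lem:large-p-q} to dispose of the case in which $q$ is large, and then to settle each of the finitely many remaining values of $q$ using the algorithm of Lemma~\ref{lem:p-equals-0} together with membership tests in the finitely generated subgroup $\im\phi$.

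First I would run the algorithm of Lemma~\ref{lem:large-p-q} on $(\phi,u,v)$. It returns a computable $d\in\mathbb{N}_0$ and either a pair $(p,q)$ with $p\geq q\geq d$ and a conjugator outside $\im\phi$, which we output, or $\mathrm{no}$, which means that no witnessing pair has $p\geq q\geq d$. In the latter case every witnessing pair has $q<d$, so it suffices to decide, for each of the finitely many $q\in\{0,\dots,d-1\}$, whether there exist $p\geq q$ and $x\in F\setminus\im\phi$ with $x^{-1}\phi^p(u)x=\phi^q(v)$. Fix such a $q$, set $w:=\phi^q(v)$, and dispose of the trivial case $w=1$ (then $v=1$, and a pair exists iff $u=1$, in which case $(0,0)$ works since $\phi$ is non-surjective). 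Because $\phi^p=\phi^{p-q}\circ\phi^q$, there is $p\geq q$ with $\phi^p(u)\sim w$ if and only if there is $p'\in\mathbb{N}_0$ with $\phi^{p'}(\phi^q(u))\sim w$, which the algorithm of Lemma~\ref{lem:p-equals-0} applied to $(\phi,\phi^q(u),w)$ both decides and, when affirmative, witnesses.

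It remains to control the requirement that the conjugator avoid $\im\phi$. Let $r$ be the root of $w$; the set of $x$ with $x^{-1}\phi^p(u)x=w$ is a coset $x_0\langle r\rangle$ of the centraliser $\langle r\rangle$ of $w$, and this coset lies in $\im\phi$ exactly when $x_0\in\im\phi$ and $r\in\im\phi$, both of which are decidable via Stallings graphs for $\im\phi$. If $r\notin\im\phi$ then no coset of $\langle r\rangle$ lies in $\im\phi$, so a witnessing pair with this $q$ exists iff $\phi^p(u)\sim w$ for some $p\geq q$, already decided. If $r\in\im\phi$ then the coset is entirely inside or entirely outside $\im\phi$; moreover, for $p\geq q\geq1$, peeling a $\phi$ off both sides of $x^{-1}\phi^p(u)x=w$ and using injectivity shows that some conjugator lies in $\im\phi$ iff $\phi^{p-1}(u)\sim\phi^{q-1}(v)$. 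Writing $a:=\phi^{q-1}(u)$, $b:=\phi^{q-1}(v)$, and using $\phi^p(u)=\phi(\phi^{p-1}(u))$, a witnessing pair with this $q$ therefore exists iff the forward $\phi$-orbit of $a$ meets the set $\mathcal{B}$ of conjugacy classes $[c]$ with $\phi(c)\sim\phi(b)$ and $c\not\sim b$. This $\mathcal{B}$ is finite and computable --- its members come from the finitely many lifts of the loop $[\phi(b)]$ to a Stallings graph of $\im\phi$, each of which has a unique $\phi$-preimage --- and for each representative $c_i$ of $\mathcal{B}$ the algorithm of Lemma~\ref{lem:p-equals-0} applied to $(\phi,a,c_i)$ decides whether $\phi^{p'}(a)\sim c_i$ for some $p'$, which yields an admissible $p$. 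The case $q=0$ with $r\in\im\phi$ is handled the same way after replacing $\phi^{q-1}(v)$ by $\phi^{-1}(v)$ (which exists since then $v\in\im\phi$) and separately testing whether $u\sim v$ by an element outside $\im\phi$. If every $q<d$ fails, output $\mathrm{no}$.

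The step I expect to be the main obstacle is this last one. The algorithm of Lemma~\ref{lem:p-equals-0} produces only \emph{some} exponent $p$ with $\phi^p(u)\sim\phi^q(v)$, whereas the statement demands an exponent whose conjugating coset avoids $\im\phi$, and some admissible exponents may have all their conjugators in $\im\phi$ while others do not. Rather than analysing the full solution set $\{p:\phi^p(u)\sim\phi^q(v)\}$, the route above recasts the existence of a good conjugator in terms of the finitely many conjugacy classes that $\phi$ collapses onto $[\phi^{q-1}(v)]$ --- equivalently, those carried by $\hat\Lambda_1$ --- and so reduces the whole problem to boundedly many calls to the decision procedures already in hand; this is the sense in which the proof combines Lemma~\ref{lem:large-p-q} with finitely many applications of Lemma~\ref{lem:p-equals-0}.
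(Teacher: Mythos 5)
Your algorithm is correct, and its skeleton is the paper's: run Lemma \ref{lem:large-p-q} to dispose of all pairs with $p\geq q\geq d$, then loop over the finitely many residual values $q<d$ and call Lemma \ref{lem:p-equals-0}. The difference is in how the small-$q$ cases are closed out. The paper's proof simply feeds $\phi^q(u),\phi^q(v)$ into Lemma \ref{lem:p-equals-0} and, on a positive answer, outputs the pair on the strength of plain conjugacy alone, without verifying that some conjugator avoids $\im\phi$; its ``no'' answer is still sound (the restricted condition implies plain conjugacy), and the unverified ``yes'' certificates do no harm where the lemma is actually used, namely in Proposition \ref{prop:PowSuperTwCP}, where $u,v\notin\im\phi$ and $u\sim v$ has already been tested separately, so a plain-conjugacy identity can always be peeled down to one witnessed by a conjugator outside $\im\phi$. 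You instead decide the conjugator-location question head on: the conjugator set for fixed $(p,q)$ is a coset $x_0\langle r\rangle$ of the centraliser of $\phi^q(v)$, which is entirely inside or entirely outside $\im\phi$ when $r\in\im\phi$; peeling gives ``some conjugator lies in $\im\phi$ iff $\phi^{p-1}(u)\sim\phi^{q-1}(v)$''; and the finitely many conjugacy classes that $\phi$ collapses onto $[\phi^q(v)]$, read off from circuits in the Stallings graph of $\im\phi$, reduce everything to boundedly many further calls to Lemma \ref{lem:p-equals-0}. I checked these steps and they hold, including the finiteness and computability of your set $\mathcal{B}$, the coset dichotomy, and the $q=0$ case (where $r\in\im\phi$ forces $v\in\im\phi$). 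So your proof is longer, but it buys something the paper's own argument does not literally deliver: a ``yes'' output guaranteed to satisfy the $x\in F\setminus\im\phi$ clause of the statement as written, rather than relying on the downstream application to render that clause immaterial.
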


\begin{proof}
First input the elements $u, v\in F$ into the algorithm of Lemma \ref{lem:large-p-q}.
If this algorithm outputs a pair $(p, q)\in \mathbb{N}_{\geq d}\times\mathbb{N}_{\geq d}$ such that $p\geq q$ and $\phi^p(u)\sim\phi^q(v)$, then this pair satisfies our restrictions so output this pair.
If no such pair exists then consider the other output $d$ from this algorithm; if there exists a pair $(p, q)\in \mathbb{N}_0\times\mathbb{N}_0$ such that $p\geq q$ and $\phi^p(u)\sim\phi^q(v)$ then we must have $q<d$.
Therefore, for all $0\leq i< d$, input the elements $\phi^i(u), \phi^i(v)$ into the algorithm of Lemma \ref{lem:p-equals-0}.
If any of these algorithms output an exponent $p\in\mathbb{N}_0$ such that $\phi^{p+i}(v)=\phi^p(\phi^i(u))\sim\phi^i(v)$ then output the pair $(p+i, i)$.
Otherwise, there is no pair satisfying the restrictions of this result, and so output ``$\mathrm{no}$''.
\end{proof}

\p{Main generalisation of Brinkmann's conjugacy algorithm}
We now have the generalisation of Brinkmann's conjugacy algorithm which we require for Theorem \ref{thm:CP}.

\begin{proposition}
\label{prop:PowSuperTwCP}
There exists an algorithm with input an injective endomorphism $\phi$ of a free group $F$ and two elements $u, v\in F$, and output either a pair $(p, q)\in \mathbb{N}_0\times\mathbb{N}_0$ with
%$p\geq q$ such that
$\phi^p(u)\sim\phi^q(v)$, or $\mathrm{no}$ if no such pair exists.
\end{proposition}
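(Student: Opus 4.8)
The plan is to reduce \Cref{prop:PowSuperTwCP} to \Cref{lem:PowSuperTwCPTECHNICAL} by handling two issues: first, the case where $\phi$ is surjective (hence an automorphism), which is not covered by the ``non-surjective'' hypothesis of the technical lemma; and second, the case where a conjugator lies inside $\im\phi$, which the technical lemma explicitly excludes.

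\textbf{Outline.} First I would dispose of the surjective case. If $\phi$ is surjective then it is an automorphism of $F$, and the desired statement follows directly from Brinkmann's conjugacy algorithm \cite[Theorem 6.2]{Brinkmann2010Detecting}: that algorithm decides whether there is $p\in\mathbb{Z}$ with $\phi^p(u)\sim v$, and if $p<0$ one rewrites $\phi^p(u)\sim v$ as $u\sim\phi^{-p}(v)$, i.e.\ $\phi^0(u)\sim\phi^{-p}(v)$, giving a pair in $\mathbb{N}_0\times\mathbb{N}_0$ (and conversely any such pair yields an integer exponent for Brinkmann's algorithm). So assume from now on that $\phi$ is injective and non-surjective.

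\textbf{Reducing to a conjugator outside $\im\phi$.} The key observation is that any conjugating element can be pushed out of $\im\phi$ at the cost of increasing the exponents. Suppose $x\in F$ satisfies $x^{-1}\phi^p(u)x=\phi^q(v)$. Since $F\setminus\im\phi$ is non-empty (as $\phi$ is non-surjective), pick any $z\in F\setminus\im\phi$; then $(zx)^{-1}\,\phi^{p+1}(u')\,(zx)$ can be arranged to equal $\phi^{q+1}(v')$ after applying $\phi$ to the original identity and conjugating, where $u'=\phi(u)$, $v'=\phi(v)$ — more precisely, applying $\phi$ to $x^{-1}\phi^p(u)x=\phi^q(v)$ gives $\phi(x)^{-1}\phi^{p+1}(u)\phi(x)=\phi^{q+1}(v)$ with conjugator $\phi(x)\in\im\phi$, and then replacing $\phi(x)$ by $z^{-1}\phi(x)$... hmm, that does not obviously stay outside $\im\phi$. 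The cleaner route: a conjugator $x$ is not unique — it may be replaced by $cx$ for any $c$ centralising $\phi^p(u)$. If $\phi^p(u)$ is not a proper power then its centraliser is cyclic generated by its root $r$, and since $F$ has elements outside $\im\phi$, some $r^j x\notin\im\phi$ unless the entire coset $\langle r\rangle x$ lies in $\im\phi$; but a coset of a cyclic subgroup being contained in $\im\phi$ forces $\langle r\rangle\leq\im\phi$, and one checks this can only happen for controlled $p$, or can be absorbed by increasing $p$ using that $\phi^{p}(u)$ has a root whose high powers escape $\im\phi$. The honest and simplest statement is: \emph{either} there is a solution with conjugator outside $\im\phi$ (detected by \Cref{lem:PowSuperTwCPTECHNICAL}, in either the $(p,q)$ or the $(q,p)$ orientation), \emph{or} every solution has conjugator in $\im\phi$, and in the latter case $\phi^p(u)=\phi(w)$ and $\phi^q(v)=\phi(w')$ are both in $\im\phi$ with $w\sim w'$, so we may strip one $\phi$ and recurse — but recursion does not terminate, so instead observe that if \emph{all} solutions have conjugator in $\im\phi$ for all exponents, then in particular $u'\sim v'$ in $\im\phi$ for the stripped words, and iterating this stripping must eventually produce a word not in $\im\phi$ (by a word-length/Stallings-graph argument, since $\bigcap_i \im\phi^i$ contains no element whose full conjugacy class survives infinitely many strippings unless it is trivial — or one bounds the number of strippings by the rank). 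Run \Cref{lem:PowSuperTwCPTECHNICAL} on the finitely many stripped pairs.

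\textbf{Main obstacle.} The crux is exactly the interplay between ``conjugator in $\im\phi$'' and ``conjugator outside $\im\phi$'': I expect the real work is showing that finitely many applications of \Cref{lem:PowSuperTwCPTECHNICAL} and \Cref{lem:p-equals-0} suffice, i.e.\ bounding how far one must iterate before either finding a solution with exterior conjugator or certifying none exists. The likely mechanism is that $\phi^p(u)\sim\phi^q(v)$ with interior conjugator for \emph{all} large $p,q$ would force, via \Cref{prop:JPM5.5} and \Cref{lem:primitivity}, that $u$ and $v$ both stabilise to powers of primitive elements under $\phi$, a situation handled by the $\Psi$-periodic-vertex-rose analysis already carried out in the proof of \Cref{lem:p-equals-0}. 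So the proof should thread the surjective case through Brinkmann, the ``eventually a vertex-rose'' case through \Cref{lem:p-equals-0}, and everything else through \Cref{lem:PowSuperTwCPTECHNICAL}, taking the union (in both orientations $p\geq q$ and $q\geq p$) of the finitely many outputs.
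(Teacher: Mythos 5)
Your overall shape is right — dispose of the surjective case, use Lemma \ref{lem:PowSuperTwCPTECHNICAL} (in both orientations) when a conjugator can be taken outside $\im\phi$, and treat separately the residual case where every conjugator lies in $\im\phi$ — and your "stripping" idea (apply $\phi^{-1}$ to the conjugation identity until the conjugator leaves $\im\phi$) is indeed how the paper reduces to Lemma \ref{lem:PowSuperTwCPTECHNICAL}. But there is a genuine gap at exactly the point you flag as the crux: your termination argument for the stripping recursion is false. The stable image $\phi^{\infty}(F)=\bigcap_i\phi^i(F)$ is in general a nontrivial subgroup (it contains, for instance, every $\phi$-fixed element), and its elements survive infinitely many strippings; so the claim that iterated stripping "must eventually produce a word not in $\im\phi$ unless it is trivial" is wrong, and bounding the number of strippings by the rank does not rescue it (take $\phi(a)=a$: the element $a$ strips forever). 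Your earlier attempts — pushing a conjugator out of $\im\phi$ by applying $\phi$, or modifying it by the centraliser — do not work either, as you note yourself, and the closing "likely mechanism" via Lemma \ref{lem:primitivity} and the vertex-rose analysis is a guess, not a reduction; so the algorithm you describe has no certified halting criterion in the interior-conjugator case.

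The missing ingredient is to make the stable image explicit. By Imrich--Turner, $\phi$ restricts to an automorphism of $\phi^{\infty}(F)$, and by Mutanguha one can compute a basis of $\phi^{\infty}(F)$ and so decide membership of $u$ and $v$. If $u$ or $v$ lies in $\phi^{\infty}(F)$, then any identity $\phi^p(u)=g^{-1}\phi^q(v)g$ with $p\geq q$ has left-hand side in $\im\phi^{q}$, so one can apply $\phi^{-q}$ and the two-sided problem collapses to the one-sided problem "$\exists p:\ \phi^{p}(u)\sim v$" (and symmetrically for $q\geq p$), which is exactly Lemma \ref{lem:p-equals-0}; this branch needs no appeal to Lemma \ref{lem:PowSuperTwCPTECHNICAL} and automatically absorbs your surjective case, where $\phi^{\infty}(F)=F$. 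If neither $u$ nor $v$ lies in $\phi^{\infty}(F)$, then the minimal $m,n$ with $u\notin\phi^{m}(F)$, $v\notin\phi^{n}(F)$ exist and are computable, one sets $u'=\phi^{-(m-1)}(u)$, $v'=\phi^{-(n-1)}(v)\notin\im\phi$, and now your stripping genuinely terminates: applying $\phi^{-1}$ at most $\min(p+m,q+n)$ times to any conjugation identity either yields a conjugator outside $\im\phi$ or reaches $u'$ or $v'$, whose non-membership in $\im\phi$ forces either $u'\sim v'$ or a conjugator outside $\im\phi$, after which finitely many calls to Lemma \ref{lem:PowSuperTwCPTECHNICAL} (on $(u',v')$ and $(v',u')$) and a single conjugacy check finish the job. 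In short, the case split must be governed by membership in $\phi^{\infty}(F)$, not by a length or rank bound on strippings; without that your residual case is not handled.
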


\begin{proof}
The \emph{stable image} of $\phi$ is the subgroup $\phi^{\infty}(F):=\cap_{i=1}^{\infty}\phi^i(F)$.
The restriction of $\phi$ to $\phi^{\infty}(F)$ is an automorphism of $\phi^{\infty}(F)$ \cite{Imrich1989Endomorphisms}.
Our algorithm here begins by applying an algorithm of Mutanguha to compute a basis for $\phi^{\infty}(F)$ \cite{JPM}, and then using this basis to determine if $u$ and $v$ are contained in this subgroup.

Suppose that one of $u$ or $v$ is contained in $\phi^{\infty}(F)$.
As the restriction of $\phi$ to $\phi^{\infty}(F)$ is an automorphism of $\phi^{\infty}(F)$, an identity $\phi^p(u)=g^{-1}\phi^q(v)g$, $p\geq q\geq0$, holds if and only if
$\phi^{p-q}(u)=\phi^{-q}(g^{-1}\phi^q(v)g)$.
As $\phi$ is injective, $\phi^{-q}(g^{-1}\phi^q(v)g)$ is a well-defined element of $F$, and is conjugate to $v$.
Therefore, an identity $\phi^p(u)\sim\phi^q(v)$, $p\geq q\geq0$, holds if and only if an identity $\phi^p(u)\sim v$, $p\geq 0$, holds.
Symmetrically, an identity $\phi^p(u)=g^{-1}\phi^q(v)g$, $q\geq p\geq0$, holds if and only if an identity $u\sim \phi^q(v)$, $q\geq 0$, holds.
The algorithm of Lemma \ref{lem:p-equals-0} solves these latter two problems.
Therefore, output either the pair $(p, 0)$, or $(0, q)$, or $\mathrm{no}$, as appropriate.

Suppose that neither of $u$ or $v$ is contained in $\phi^{\infty}(F)$.
There therefore exist minimum numbers $m, n\in\mathbb{N}$ such that $u\not\in\phi^m(F)$ and $v\not\in\phi^n(F)$, and moreover these numbers are computable.
Define $u':=\phi^{-(m-1)}(u)$ and $v':=\phi^{-(n-1)}(v)$; as $\phi$ is injective these elements are well-defined, and they are computable and are not contained in $\im\phi$.
Now, there exists $p, q\geq0$ with $\phi^p(u)\sim\phi^q(v)$ if and only if either $u'\sim v'$ or there exists $p', q'\geq0$ and $x\not\in\im\phi$ such that $x^{-1}\phi^{p'}(u')x=\phi^{q'}(v')$.
To see this, note first that
if $u'\sim v'$ then we map apply $\phi^{\max(m, n)}$ to both sides and replace $\phi^m(u')$ with $u$ and $\phi^n(v')$ with $v$ to get the required identity, with $(p, q):=(\max(m, n)-m, \max(m, n)-n)$,
while if
$x^{-1}\phi^{p'}(u')x=\phi^{q'}(v')$ for $x\not\in\im\phi$ then we map apply $\phi^{\max(m, n)}$ to both sides and replace $\phi^m(u')$ with $u$ and $\phi^n(v')$ with $v$ to get the required identity, with $(p, q):=(p'+\max(m, n)-m, q'+\max(m, n)-n)$.
On the other hand, suppose the identity $\phi^p(u)\sim\phi^q(v)$ holds, so $y^{-1}\phi^{p}(u)y=\phi^{q}(v)$ for some $y\in F$.
As $\phi$ is injective, if $y\in\im\phi$ we may apply $\phi^{-1}$ to both sides to obtain an identity $y_1^{-1}\phi^{p-1}(u)y_1=\phi^{q-1}(v)$.
We may repeat this, stopping either when the conjugator $x:=y_i\not\in\im\phi$, as required, or after $\min(p+m, q+n)$ steps.
If we get to $\min(p+m, q+n)$ steps, then we either have an identity $x^{-1}\phi^{a}(u')x=v'$ with $v'\not\in\im\phi$ or an identity $x^{-1}u'x=\phi^a(v')$ with $u'\not\in\im\phi$.
If in either case $a=0$ then $u'\sim v'$, as required, while if $a\neq0$ then, as $\phi^a(u')\in\im\phi$ but $v\not\in\im\phi$ in the first case, or as $u\not\in\im\phi$ but $\phi^a(v')\in\im\phi$ in the second, we have that $x\not\in\im\phi$, as required.
We therefore verify if $u'\sim v'$, and if so output $(p, q):=(\max(m, n)-m, \max(m, n)-n)$, which works by the above.
If they are not conjugate, run the algorithm of Lemma \ref{lem:PowSuperTwCPTECHNICAL} on the pairs $(u', v')$ and $(v', u')$, and if either outputs a pair $(p', q')$ then output $(p, q):=(p'+\max(m, n)-m, q'+\max(m, n)-n)$, which works by the above.
Otherwise, there is no pair satisfying the restrictions of this result, and so output ``$\mathrm{no}$''.
\end{proof}

\subsection{Theorem \ref{thm:UltraTwCP} for injections}
We are now able to prove the injective version of Theorem \ref{thm:UltraTwCP}; this is the version we use in the proof of Theorem \ref{thm:CP}.
For ease of application, we omit the constraint that $p\geq q$.

\begin{proposition}
\label{prop:UltraTwCPINJECTIVE}
There exists an algorithm with input an injective endomorphism $\phi$ of a free group $F$, a natural number $n\in\mathbb{N}_0$ and two elements $u, v\in F$, and output either a pair $(p, q)\in \mathbb{N}_0\times\mathbb{N}_0$ such that $\phi^p(u)$ is $\phi^n$-twisted conjugate to $\phi^q(v)$, or $\mathrm{no}$ if no such pair exists.
\end{proposition}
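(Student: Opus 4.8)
The plan is to split on whether $n=0$, handing the first case to Proposition \ref{prop:PowSuperTwCP} and the second to a short finite search of the kind used in the proof of Proposition \ref{prop:ExtendedTwCP}.

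First I would dispose of the case $n=0$. Here $\phi^n=\operatorname{id}$, so ``$\phi^n$-twisted conjugate'' is the same as ``conjugate'', and the problem is exactly to find a pair $(p,q)\in\mathbb{N}_0\times\mathbb{N}_0$ with $\phi^p(u)\sim\phi^q(v)$. This is precisely the output of the algorithm of Proposition \ref{prop:PowSuperTwCP}, so I would simply run that algorithm and return its answer. Note that Proposition \ref{prop:ExtendedTwCP} is of no use for $n=0$, since its reduction only makes sense for $n\geq1$; this is exactly why the $n=0$ case requires the full two-sided generalisation of Brinkmann's conjugacy algorithm.

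For $n\geq1$ I would rerun the argument of Proposition \ref{prop:ExtendedTwCP}, which never uses non-surjectivity of $\phi$ and so applies equally to automorphisms. Set $\psi:=\phi^n$. Any $p\in\mathbb{N}_0$ decomposes as $p=np'+i$ with $0\leq i<n$, and then $\phi^p(u)=\psi^{p'}\bigl(\phi^i(u)\bigr)$, which by Lemma \ref{lem:TwCC} applied to the endomorphism $\psi$ is $\psi$-twisted conjugate to $\phi^i(u)$; similarly $\phi^q(v)$ is $\psi$-twisted conjugate to $\phi^j(v)$, where $0\leq j<n$ is the residue of $q$ modulo $n$. Since $\psi$-twisted conjugacy classes are equivalence classes, there is a pair $(p,q)$ such that $\phi^p(u)$ and $\phi^q(v)$ are $\psi$-twisted conjugate if and only if there is a pair $(i,j)$ with $0\leq i,j<n$ such that $\phi^i(u)$ and $\phi^j(v)$ are $\psi$-twisted conjugate. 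The algorithm therefore applies Lemma \ref{lem:TwCP} with input endomorphism $\phi^n$ to each of the finitely many pairs $\bigl(\phi^i(u),\phi^j(v)\bigr)$ with $0\leq i,j<n$, outputs such an $(i,j)$ if one is found, and outputs $\mathrm{no}$ otherwise.

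The substantive work is entirely contained in Proposition \ref{prop:PowSuperTwCP}, so I do not expect a genuine obstacle; the $n\geq1$ case is just a bookkeeping reduction. The two points to be careful about are invoking Lemma \ref{lem:TwCC} for $\phi^n$ rather than for $\phi$ (so that grouping exponents by residue modulo $n$ genuinely collapses the relevant $\phi^n$-twisted conjugacy classes to finitely many representatives), and noting that the argument of Proposition \ref{prop:ExtendedTwCP} does not require $\phi$ to be non-surjective, so there is no need to treat the automorphism case separately.
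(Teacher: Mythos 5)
Your proposal is correct and takes essentially the same route as the paper: the $n=0$ case is delegated to Proposition \ref{prop:PowSuperTwCP} (since $\phi^0$-twisted conjugacy is ordinary conjugacy) and the $n>0$ case to the modulo-$n$ reduction of Proposition \ref{prop:ExtendedTwCP} via Lemmas \ref{lem:TwCC} and \ref{lem:TwCP}. The only differences are cosmetic: you rerun the Proposition \ref{prop:ExtendedTwCP} argument and correctly observe that it never uses non-surjectivity (the paper simply cites that proposition, whose statement nominally assumes $\phi$ non-surjective), and the paper runs the algorithm of Proposition \ref{prop:PowSuperTwCP} on both $(u,v)$ and $(v,u)$, a precaution only needed if that algorithm is read as certifying pairs with $p\geq q$, whereas a single run suffices under its stated symmetric form.
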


\begin{proof}
If $n=0$ then the result holds by Proposition \ref{prop:PowSuperTwCP}; we run the algorithm of this result twice, inputting first the ordered pair $(u, v)$, and then $(v, u)$.
If $n>0$ then the result holds by Proposition \ref{prop:ExtendedTwCP}.
\end{proof}

A reader interested only in the Conjugacy Problem may now skip straight to Section \ref{sec:MainProofs} and the proof of Theorem \ref{thm:CP}.

%%%%%%%%%%%%%%%%%%%%%%%%%%%%%%%%%%%%%%%%%%%%%
%%%%%%%%%%%%%%%%%%%%%%%%%%%%%%%%%%%%%%%%%%%%%
%------------------------------------------------------------------------------------------------------------------------
%------------------------------------------------------------------------------------------------------------------------
%---------------------------------------------------dynamics--------------------------------------------------------
%------------------------------------------------------------------------------------------------------------------------
%------------------------------------------------------------------------------------------------------------------------
%%%%%%%%%%%%%%%%%%%%%%%%%%%%%%%%%%%%%%%%%%%%%
%%%%%%%%%%%%%%%%%%%%%%%%%%%%%%%%%%%%%%%%%%%%%

\section{Dynamics of endomorphisms}
\label{sec:MainDynamicsProofs}
We now generalise the results of the previous sections on dynamics of monomorphisms to all endomorphisms.
We also prove versions where conjugacy is replaced by equality.

\subsection{From non-injective to injective}
We start with a technical lemma which allows us to jump from a non-injective endomorphism of $F$ to an injective endomorphism of a free factor $F'$ of $F$, and is based on a cute result of Imrich and Turner \cite[Theorem 2]{Imrich1989Endomorphisms}.
A \emph{retaction map} is a map from a group $G$ to a subgroup $H<G$, so $\pi:G\to H$, such that the restriction of $\pi$ to $H$ is the identity map; this corresponds to a split homomorphism.
Every free factor of a group admits a retraction map.
The \emph{rank} of a free group $F$, written $\rk(F)$, is the cardinality of a minimal generating set for $F$; note that proper free factors of $F$ have strictly smaller rank.

\begin{lemma}
\label{lem:inj-to-non-inj}
There exists an algorithm with input an endomorphism $\phi:F\to F$ and output a free factor $F'$ of $F$,
with retraction map $\pi:F\to F'$,
such that both the maps
\[
\phi|_{F'}\colon F'\to F\quad\&\quad
\overline{\phi}:=\pi\phi|_{F'}\colon F'\to F'
\]
are injective,
and such that for all $n\in\mathbb{N}_0$, the endomorphism $\overline{\phi}^n\pi:F\to F$ is precisely the endomorphism $\pi\phi^n:F\to F$.
\end{lemma}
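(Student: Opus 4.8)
The plan is to reduce the whole statement to a single condition on $\pi$: that $\ker\pi$ be $\phi$-invariant. To produce such a $\pi$ I would take $\ker\pi$ to be the \emph{stable kernel} $K := \bigcup_{i\ge 1}\ker(\phi^i)$, which is a normal subgroup of $F$ and is manifestly $\phi$-invariant (if $\phi^i(g)=1$ then $\phi^{i}(\phi(g))=\phi^{i+1}(g)=1$). First I would observe that $K$, and a suitable complement, can be found effectively. The ranks of the images $\phi^i(F)$ are non-increasing, hence eventually constant; let $i_0$ be the first index with $\rk(\phi^{i_0}(F)) = \rk(\phi^{i_0+1}(F))$, which is detectable since images and their ranks are computable from Stallings' graphs. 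The induced surjection $\phi^{i_0}(F)\twoheadrightarrow\phi^{i_0+1}(F)$ of free groups of equal finite rank is then an isomorphism, so $\phi$ is injective on $\phi^{i_0}(F)$ and hence on every later image; consequently $\ker(\phi^i)=\ker(\phi^{i_0})$ for all $i\ge i_0$, so $K=\ker(\phi^{i_0})$. As $F/K\cong\phi^{i_0}(F)$ is free of finite rank, the quotient $F\to F/K$ splits, and by Imrich and Turner's Theorem~2 (the ``cute result'' referred to above) — applied to the endomorphism $\phi^{i_0}$, whose kernel is $K$ — the splitting image may be taken to be a free factor $F'$ of $F$, so that $F = K\rtimes F'$. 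Such an $F'$, together with the retraction $\pi\colon F\to F'$ with kernel $K$, can be produced effectively, for instance by lifting a basis of $\phi^{i_0}(F)$ through $\phi^{i_0}$ and checking (decidably) that the resulting complement is a free factor.

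It remains to check the three required properties, which I would do as follows. Injectivity of $\phi|_{F'}\colon F'\to F$ is immediate, since $\ker(\phi)\le K$ and $F'\cap K=1$. For $\overline\phi = \pi\phi|_{F'}$: if $\overline\phi(g)=1$ with $g\in F'$ then $\phi(g)\in\ker\pi = K = \ker(\phi^{i_0})$, so $\phi^{i_0+1}(g)=1$, i.e. $g\in\ker(\phi^{i_0+1})=\ker(\phi^{i_0})=K$, whence $g\in F'\cap K=1$ and $\overline\phi$ is injective. Finally, the identities $\overline\phi^n\pi=\pi\phi^n$ hold because $K=\ker\pi$ is $\phi$-invariant: for any $g\in F$ we have $g\,\pi(g)^{-1}\in\ker\pi$, hence $\phi(g)\,\phi(\pi(g))^{-1}=\phi\big(g\,\pi(g)^{-1}\big)\in\phi(K)\le K=\ker\pi$, which forces $\pi\phi(g)=\pi\phi(\pi(g))=\overline\phi(\pi(g))$; thus $\pi\phi=\overline\phi\pi$, and inducting via $\pi\phi^n=(\pi\phi)\phi^{n-1}=\overline\phi(\pi\phi^{n-1})=\overline\phi^n\pi$, with base case $\pi\phi^0=\pi=\overline\phi^0\pi$, gives the claim for all $n\in\mathbb{N}_0$.

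The main obstacle I anticipate is the appeal to Imrich and Turner: one needs that the stable kernel $K$ is the normal closure of a free factor of $F$ (so that the retract $F'$ one would otherwise only know to exist is genuinely a free factor), and one needs this in an effective form, so that $F'$ and $\pi$ are computable from $\phi$ rather than merely shown to exist. Everything else — the stabilisation of $\ker(\phi^i)$, the Stallings' graph computations, and the elementary manipulations above — is routine, so the work of the proof is concentrated in extracting the free factor $F'$ effectively.
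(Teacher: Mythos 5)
Your overall architecture is correct, and it is a genuinely different route from the paper's. The paper never introduces the stable kernel: it Nielsen-reduces the tuple $(\phi(x_1),\ldots,\phi(x_n))$ to get a proper free factor $F_1$ with $\phi|_{F_1}$ injective and the complementary basis elements killed, then iterates on $\pi\phi|_{F_1}$ (the rank dropping each time, so at most $\rk(F)$ rounds) and takes $F'$ to be the terminal factor, justifying the commutation identity by the one-line claim ``$\phi=\phi\pi$''. Your verification steps are all fine: the computation of $i_0$ via stabilising ranks of $\phi^i(F)$, the identification $K=\ker(\phi^{i_0})$, the injectivity of $\phi|_{F'}$ and of $\overline{\phi}$, and especially the proof of $\pi\phi^n=\overline{\phi}^n\pi$ from $\phi$-invariance of $\ker\pi$. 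That last argument is in fact the more robust one: the paper's identity $\phi=\phi\pi$ is only literally true when its iteration stops after one step, whereas $\phi(\ker\pi)\subseteq\ker\pi$ is exactly the condition needed and is what both constructions actually deliver. What your version buys is a canonical $\pi$ (kernel equal to the stable kernel) and a transparent commutation proof; what the paper's version buys is that it never has to identify $\ker\phi^{i_0}$ or compute $i_0$ at all.

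The genuine gap is the one you flag, and it is the entire substance of the lemma: you must \emph{effectively} produce a free factor $F'$ with a retraction $\pi\colon F\to F'$ whose kernel is $K$, equivalently a section of $F\onto F/K\cong\phi^{i_0}(F)$ whose image is a free factor. Deferring this to Imrich--Turner is risky (the paper only says its lemma is ``based on'' that result and never invokes it in the proof), and your effectivity sketch does not work as written: an arbitrary lift of a basis of $\phi^{i_0}(F)$ always spans a section image, but that image need not be a free factor, and ``checking the complement'' tests the wrong condition anyway (for $\phi\colon a\mapsto a$, $b\mapsto a$ on $F(a,b)$, the free factor $\langle a\rangle$ is a valid section image although $\langle\langle b\rangle\rangle\neq K$; the correct retraction is $a\mapsto a$, $b\mapsto a$, not the projection killing $b$). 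The clean way to close the gap is one round of exactly the computation the paper runs, applied to $\psi:=\phi^{i_0}$: Nielsen-reduce $(\psi(x_1),\ldots,\psi(x_n))$ to a basis of $\im\psi$ padded by trivial entries, giving a computable automorphism $\alpha$ and a decomposition $F=F'\ast C$ with $\psi|_{F'}$ injective and $\psi(C)=1$. Then $\langle\langle C\rangle\rangle\subseteq\ker\psi$, and $F/\langle\langle C\rangle\rangle\cong F'$ surjects onto $F/\ker\psi\cong\im\psi$, two free groups of the same finite rank, so Hopficity forces $\langle\langle C\rangle\rangle=\ker\psi=K$; the projection killing $C$ is the desired retraction, and every step is algorithmic. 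With that inserted, your proof is complete and, apart from replacing the paper's iteration by a single Nielsen reduction of the longer words $\psi(x_i)$, fully rigorous.
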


\begin{proof}
The algorithm begins by determining if $\phi:F\to F$ is injective.
If it is injective, then output $F':=F$, and the result trivially holds.
If it is non-injective, let $\{x_1, \ldots, x_n\}$ be a free basis for $F$ and $\phi(x_i)=w_i$.
The set $\{w_1, \ldots, w_n\}$ Nielsen reduces to a set $\{v_1, \ldots, v_k, 1, \ldots, 1\}$; thus, there is a computable automorphism $\alpha$, the product of the corresponding Nielsen automorphisms, so that $\phi(\alpha(x_i))=v_i$ for $1\leq i\leq k$ and $\phi(\alpha(x_i))=1$ for $i>k$.
Let $F_1$ be the free factor of $F$ generated by $\alpha(x_1), \ldots, \alpha(x_k)$.
By construction, $\phi|_{F_1}:F_1\to F$ is injective, and $\rk(F_1)<\rk(F)$.

Hence, by considering $\pi\phi|_{F_1}:F_1\to F_1$ we can proceed inductively to obtain a free factor $F_{j}$ of $F_{j-1}$, and hence also of $F$, such that $\phi|_{F_j}:F_j\to F$ is injective.
This procedure will terminate when $\pi\phi|_{F_j}$ is injective, and as $\rk(F_{j})<\rk(F_{j-1})$ this will happen after at most $\rk(F)$ iterations.
Set $F':=F_j$ to be the factor which we terminate at.

It is sufficient to prove that $\overline{\phi}^n\pi:F\to F$ and $\pi\phi^n:F\to F$ are the same endomorphism.
This holds because $\phi=\phi\pi$, so
\[
\pi\phi^n=\pi(\phi\pi)^n=(\pi\phi)^n\pi=\overline{\phi^n}\pi
\]
as required.
We can therefore output $F'$, as it has the required properties.
\end{proof}

\subsection{Brinkmann's conjugacy algorithm}
We start by extending our direct generalisation of Brinkmann's conjugacy algorithm.
This result does not follow from the two-sided generalisation, which is Proposition \ref{prop:BCA2} below, but is applied in the proposition's proof.

\begin{theorem}
\label{thm:BCA1}
There exists an algorithm with input an endomorphism $\phi$ of a free group $F$ and two elements $u, v\in F$ and outputs either a pair $(p, k)\in \mathbb{N}_0\times\ker\phi$ such that $\phi^p(u)\sim vk$, or $\mathrm{no}$ if no such pair exists.
\end{theorem}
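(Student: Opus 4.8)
The plan is to reduce to the injective case, where Lemma~\ref{lem:p-equals-0} applies. First I would run the algorithm of Lemma~\ref{lem:inj-to-non-inj} on $\phi$, obtaining a free factor $F'\leq F$ with retraction $\pi\colon F\to F'$ such that $\overline{\phi}:=\pi\phi|_{F'}$ is an injective endomorphism of $F'$ and $\overline{\phi}^n\pi=\pi\phi^n$ for all $n\in\mathbb{N}_0$; recall also from the proof of that lemma that $\phi=\phi\pi$, so in particular $\ker\pi\leq\ker\phi$. The key claim is then: for every $p\in\mathbb{N}_0$, there exists $k\in\ker\phi$ with $\phi^p(u)\sim vk$ in $F$ if and only if $\overline{\phi}^p(\pi(u))\sim\pi(v)$ in $F'$. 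Given this, the algorithm is immediate: compute $\pi(u),\pi(v)\in F'$, feed the injective endomorphism $\overline{\phi}$ of $F'$ together with $\pi(u),\pi(v)$ into the algorithm of Lemma~\ref{lem:p-equals-0}, return $\mathrm{no}$ if it returns $\mathrm{no}$, and otherwise use the output exponent $p$ to reconstruct a witness $k$ as in the proof of the claim (below).

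For the forward direction of the claim, suppose $x^{-1}\phi^p(u)x=vk$ with $k\in\ker\phi$. Applying the homomorphism $\pi$ and using $\pi\phi^p=\overline{\phi}^p\pi$ gives $\pi(x)^{-1}\overline{\phi}^p(\pi(u))\pi(x)=\pi(v)\pi(k)$, and here $\pi(k)=1$ because $\overline{\phi}(\pi(k))=\pi(\phi(\pi(k)))=\pi(\phi(k))=1$ while $\overline{\phi}$ is injective; hence $\overline{\phi}^p(\pi(u))\sim\pi(v)$ in $F'$. For the converse, suppose $y^{-1}\overline{\phi}^p(\pi(u))y=\pi(v)$ with $y\in F'\leq F$. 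Since $\overline{\phi}^p(\pi(u))=\pi(\phi^p(u))$ and $\pi$ fixes $F'$ pointwise, this reads $\pi\bigl(y^{-1}\phi^p(u)y\bigr)=\pi(v)$, so $m:=\bigl(y^{-1}\phi^p(u)y\bigr)v^{-1}\in\ker\pi\leq\ker\phi$; then $k:=v^{-1}mv\in\ker\phi$ (as $\ker\phi$ is normal) and $vk=mv=y^{-1}\phi^p(u)y$, so $\phi^p(u)\sim vk$. This proves the claim, and moreover shows how to recover $k$: once Lemma~\ref{lem:p-equals-0} returns $p$, find by the usual cyclic-reduction manipulation in the free group $F'$ a conjugator $y\in F'$ with $y^{-1}\overline{\phi}^p(\pi(u))y=\pi(v)$, and output $\bigl(p,\;v^{-1}y^{-1}\phi^p(u)y\bigr)$.

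The only genuinely delicate point is the reduction, and more precisely the role of $\pi$ in it. If one instead tries to relate the target identity $\phi^p(u)\sim vk$ directly to $\phi^{p+1}(u)\sim\phi(v)$, one is forced to track whether the conjugator lies in $\im\phi$, and that ``conjugator-in-$\im\phi$'' condition is itself essentially the original problem again. Pushing forward along $\pi$ dissolves this obstruction cleanly, because $\pi$ collapses $\ker\pi\leq\ker\phi$ and $\overline{\phi}$ is injective, so the entire $\ker\phi$ ambiguity becomes invisible in $F'$; once the equivalence of the claim is established, the remainder is bookkeeping together with an appeal to Lemma~\ref{lem:p-equals-0} (and a routine check of the edge cases where $F'$ or $\overline{\phi}$ degenerates, e.g.\ when $\phi$ is injective, in which case $F'=F$, $\pi=\mathrm{id}$, $\ker\phi=1$, and the statement reduces verbatim to Lemma~\ref{lem:p-equals-0}).
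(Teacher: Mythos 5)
Your proposal is correct and follows essentially the same route as the paper: reduce via Lemma~\ref{lem:inj-to-non-inj} to the injective map $\overline{\phi}$ on the free factor $F'$, prove the equivalence $\phi^p(u)\sim vk \Leftrightarrow \overline{\phi}^p\pi(u)\sim\pi(v)$ using $\phi=\phi\pi$ and $\ker\pi=\ker\phi$, and invoke Lemma~\ref{lem:p-equals-0}. The only differences are cosmetic: the paper passes from conjugacy in $F$ to conjugacy in $F'$ via malnormality of the free factor, whereas you take the conjugator in $F'$ directly and, in addition, explicitly reconstruct the witness $k$.
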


\begin{proof}
Construct the free factor $F'$, and associated monomorphism $\overline{\phi}:F'\to F'$.
We shall prove that $\phi^p(u)\sim vk$ if and only if $\overline{\phi}^p\pi(u)\sim \pi(v)$; as we can apply Lemma \ref{lem:p-equals-0} to determine if this second problem has a solution, this proves the result.
Here we use that $\overline{\phi}^n\pi=\pi\phi^n:F\to F$, by Lemma \ref{lem:inj-to-non-inj}.

Suppose $\phi^p(u)\sim vk$.
Then we have $\overline{\phi}^p\pi(u)=\pi\phi^p(u)\sim \pi(vk)=\pi(v)$, as required.

Suppose now that $\overline{\phi}^p\pi(u)\sim \pi(v)$, which is equivalent to $\pi\phi^p(u)\sim \pi(v)$.
Both $\pi\phi^p(u)$ and $\pi(v)$ are contained in the free factor $F'$, so by malnormality of $F'$ these elements are conjugate in $F'$, i.e. there exists $w\in F'$ such that $\pi\phi^p(u)= w^{-1}\pi(v)w$.
As $\pi$ is a retraction map this gives $\pi\phi^p(u) = \pi(w^{-1}vw)$, which implies that there exists some $k\in\ker\pi=\ker\phi$ such that $\phi^p(u) \sim vk$, as required.
\end{proof}

Next we have our two-sided generalisation of Brinkmann's conjugacy algorithm.
This has been given the rank of Proposition rather than Theorem, as Theorem \ref{thm:UltraTwCP} from the introduction is simply the combination of this result with Proposition \ref{prop:PowSuperTwCP}.
Note that here, unlike in Theorem \ref{thm:BCA1}, we do not have have an annoying ``$k\in\ker\phi$'' in our identity.
Despite this, the proof is essentially identical.

\begin{proposition}
\label{prop:BCA2}
There exists an algorithm with input an endomorphism $\phi$ of a free group $F$ and two elements $u, v\in F$, and output either a pair $(p, q)\in \mathbb{N}_0\times\mathbb{N}_0$
%with $p\geq q$
such that $\phi^p(u)\sim\phi^q(v)$, or $\mathrm{no}$ if no such pair exists.
\end{proposition}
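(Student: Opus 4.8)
The plan is to follow the proof of Theorem \ref{thm:BCA1} almost verbatim, substituting Proposition \ref{prop:PowSuperTwCP} for Lemma \ref{lem:p-equals-0} and applying $\phi$ one extra time at the end to absorb the resulting $\ker\phi$-term.

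First I would run the algorithm of Lemma \ref{lem:inj-to-non-inj} on $\phi$ to produce the free factor $F'$ of $F$, the retraction $\pi\colon F\to F'$, and the injective endomorphism $\overline{\phi}=\pi\phi|_{F'}\colon F'\to F'$ with $\overline{\phi}^n\pi=\pi\phi^n$ for all $n\in\mathbb{N}_0$; recall from the proof of that lemma that $\phi=\phi\pi$, whence $\ker\pi=\ker\phi$ (if $\phi(g)=1$ then $\phi(\pi(g))=\phi(g)=1$, and injectivity of $\phi|_{F'}$ forces $\pi(g)=1$). The key claim is the equivalence
\[
\bigl(\exists\,(p,q)\in\mathbb{N}_0\times\mathbb{N}_0 \text{ with } \phi^p(u)\sim\phi^q(v)\bigr)\ \Longleftrightarrow\ \bigl(\exists\,(p',q')\in\mathbb{N}_0\times\mathbb{N}_0 \text{ with } \overline{\phi}^{p'}(\pi(u))\sim\overline{\phi}^{q'}(\pi(v))\bigr).
\]
For the forward implication, apply the homomorphism $\pi$ to the identity $\phi^p(u)\sim\phi^q(v)$ and use $\pi\phi^n=\overline{\phi}^n\pi$. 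For the backward implication, from $\overline{\phi}^{p'}(\pi(u))\sim\overline{\phi}^{q'}(\pi(v))$ we get $\pi\phi^{p'}(u)\sim\pi\phi^{q'}(v)$ in $F$; both sides lie in $F'$, so by malnormality of the free factor $F'$ they are conjugate in $F'$, say $\pi\phi^{p'}(u)=w^{-1}\pi\phi^{q'}(v)w$ with $w\in F'$. Since $\pi$ fixes $F'$ pointwise this rearranges to $\phi^{p'}(u)\cdot\bigl(w^{-1}\phi^{q'}(v)w\bigr)^{-1}\in\ker\pi=\ker\phi$, and applying $\phi$ kills this element, giving $\phi^{p'+1}(u)=\phi(w)^{-1}\phi^{q'+1}(v)\phi(w)\sim\phi^{q'+1}(v)$, so $(p'+1,q'+1)$ is a witness.

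The algorithm is then: run the algorithm of Proposition \ref{prop:PowSuperTwCP} on the injective endomorphism $\overline{\phi}$ of the finitely generated free group $F'$ with inputs $\pi(u),\pi(v)$; if it outputs a pair $(p',q')$, output $(p'+1,q'+1)$, and if it outputs $\mathrm{no}$, output $\mathrm{no}$. Correctness is immediate from the displayed equivalence. The only step with any content is the backward implication, i.e. lifting a conjugacy between the $\pi$-images to a conjugacy between $\phi$-iterates of $u$ and $v$; but this is exactly the manoeuvre already carried out in Theorem \ref{thm:BCA1}, the sole new ingredient being the harmless extra application of $\phi$ used to discard the $\ker\phi$-term, so I do not anticipate a genuine obstacle here.
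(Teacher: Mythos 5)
Your proposal is correct and follows essentially the same route as the paper's own proof: reduce to the injective endomorphism $\overline{\phi}$ of $F'$ via Lemma \ref{lem:inj-to-non-inj}, prove the equivalence by applying $\pi$ in one direction and using malnormality of the free factor plus the retraction property in the other, then apply $\phi$ once to kill the $\ker\phi$-term, and invoke Proposition \ref{prop:PowSuperTwCP}. The only cosmetic differences are that you make the output pair $(p'+1,q'+1)$ and the identity $\ker\pi=\ker\phi$ explicit, which the paper leaves implicit.
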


\begin{proof}
Construct the free factor $F'$, and associated monomorphism $\overline{\phi}:F'\to F'$.
We shall prove that there exists a pair
$(p, q)\in \mathbb{N}_0\times\mathbb{N}_0$
%with $p\geq q$
such that $\phi^p(u)\sim\phi^q(v)$
if and only if
there exists a pair
$(p', q')\in \mathbb{N}_0\times\mathbb{N}_0$
%with $p'\geq q'$
such that $\overline{\phi}^{p'}\pi(u)\sim\overline{\phi}^{q'}\pi(v)$; as we can apply Proposition \ref{prop:PowSuperTwCP} to determine if this second problem has a solution, this proves the result.
Here we use that $\overline{\phi}^n\pi=\pi\phi^n:F\to F$, by Lemma \ref{lem:inj-to-non-inj}.

Suppose $\phi^p(u)\sim\phi^q(v)$.
Then we have $\overline{\phi}^p\pi(u)=\pi\phi^p(u)\sim\pi\phi^q(v)=\overline{\phi}^q\pi(v)$, as required.

Suppose now that $\overline{\phi}^{p'}\pi(u)\sim \overline{\phi}^{q'}\pi(v)$, which is equivalent to $\pi\phi^{p'}(u)\sim \pi\phi^{q'}(v)$.
Both $\pi\phi^{p'}(u)$ and $\pi\phi^{q'}(v)$ are contained in the free factor $F'$, so by malnormality of $F'$ these elements are conjugate in $F'$, i.e. there exists $w\in F'$ such that $\pi\phi^{p'}(u)= w^{-1}\pi\phi^{q'}(v)w$.
As $\pi$ is a retraction map this gives $\pi\phi^{p'}(u)= \pi(w^{-1}\phi^{q'}(v)w)$, which implies that there exists some $k\in\ker\pi=\ker\phi$ such that $\phi^{p'}(u)\sim \phi^{q'}(v)k$.
Applying $\phi$ to both sides kills the $k$ and so gives the result.
%, with $p:=p'+1>q'+1=:q$.
\end{proof}

\subsection{Proof of Theorem \ref{thm:UltraTwCP}}
We now prove Theorem \ref{thm:UltraTwCP} from the introduction.

\begin{theorem}[Theorem \ref{thm:UltraTwCP}]
\label{thm:UltraTwCPBODY}
There exists an algorithm with input an endomorphism $\phi$ of a free group $F$, a natural number $n\in\mathbb{N}_0$ and two elements $u, v\in F$, and output either a pair $(p, q)\in \mathbb{N}_0\times\mathbb{N}_0$
%with $p\geq q$
such that $\phi^p(u)$ is $\phi^n$-twisted conjugate to $\phi^q(v)$, or $\mathrm{no}$ if no such pair exists.
\end{theorem}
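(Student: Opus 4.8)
The plan is to split into the cases $n = 0$ and $n > 0$, mirroring the proof of Proposition \ref{prop:UltraTwCPINJECTIVE}. When $n = 0$, being $\phi^0$-twisted conjugate is the same as being conjugate, so the statement coincides verbatim with Proposition \ref{prop:BCA2}; the algorithm in that case is simply to run the algorithm of Proposition \ref{prop:BCA2} on $(u, v)$ and return its output. So I would assume $n > 0$ from now on (the argument below in fact also handles $n = 0$, but invoking Proposition \ref{prop:BCA2} there is cleaner).

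For $n > 0$, I would reduce to the injective case, namely Proposition \ref{prop:UltraTwCPINJECTIVE}, following the pattern used in the proofs of Theorem \ref{thm:BCA1} and Proposition \ref{prop:BCA2}. First, run the algorithm of Lemma \ref{lem:inj-to-non-inj} on $\phi$ to obtain the free factor $F'$, the retraction $\pi : F \to F'$ with $\ker\pi = \ker\phi$, and the injective endomorphism $\overline{\phi} : F' \to F'$ with $\overline{\phi}^m\pi = \pi\phi^m$ for all $m \in \mathbb{N}_0$. Next, run the algorithm of Proposition \ref{prop:UltraTwCPINJECTIVE} on the input $(\overline{\phi}, n, \pi(u), \pi(v))$. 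If it returns $\mathrm{no}$, return $\mathrm{no}$; if it returns a pair $(p', q')$, return $(p'+1, q'+1)$.

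The correctness statement to verify is: there exists a pair $(p,q)$ with $\phi^p(u)$ $\phi^n$-twisted conjugate to $\phi^q(v)$ if and only if there exists a pair $(p',q')$ with $\overline{\phi}^{p'}\pi(u)$ $\overline{\phi}^n$-twisted conjugate to $\overline{\phi}^{q'}\pi(v)$. For the forward implication, apply the homomorphism $\pi$ to an identity $\phi^p(u) = x^{-1}\phi^q(v)\phi^n(x)$ and use $\pi\phi^m = \overline{\phi}^m\pi$ to get $\overline{\phi}^p\pi(u) = \pi(x)^{-1}\overline{\phi}^q\pi(v)\,\overline{\phi}^n\pi(x)$, witnessed by the conjugator $\pi(x) \in F'$; thus $(p,q)$ itself works as $(p',q')$. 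For the converse, start from an identity $\overline{\phi}^{p'}\pi(u) = w^{-1}\overline{\phi}^{q'}\pi(v)\,\overline{\phi}^n(w)$ with $w \in F'$; since $\pi$ fixes $F'$ and $\overline{\phi}^m\pi = \pi\phi^m$, this rewrites as $\pi\phi^{p'}(u) = \pi(w^{-1}\phi^{q'}(v)\phi^n(w))$, so $\phi^{p'}(u) = w^{-1}\phi^{q'}(v)\phi^n(w)\,k$ for some $k \in \ker\pi = \ker\phi$. Applying $\phi$ kills $k$, and since $\phi^{n+1}(w) = \phi^n(\phi(w))$ this gives $\phi^{p'+1}(u) = \phi(w)^{-1}\phi^{q'+1}(v)\,\phi^n(\phi(w))$, i.e.\ $\phi^{p'+1}(u)$ is $\phi^n$-twisted conjugate to $\phi^{q'+1}(v)$.

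The only nonroutine step is this last ``apply $\phi$ to cancel the element $k \in \ker\phi$'', which is precisely what forces the shift from $(p',q')$ to $(p'+1,q'+1)$ in the output. Everything else is the same bookkeeping with Lemma \ref{lem:inj-to-non-inj} that already appears in Theorem \ref{thm:BCA1} and Proposition \ref{prop:BCA2}, so I do not anticipate a real obstacle.
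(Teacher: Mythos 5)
Your argument is correct, and for $n=0$ it coincides with the paper's proof (both just invoke Proposition \ref{prop:BCA2}). For $n>0$, however, you take a genuinely different and more roundabout route: the paper's proof simply cites Proposition \ref{prop:ExtendedTwCP}, whose proof (via Lemma \ref{lem:TwCP} and Corollary \ref{corol:fixedAlg}, i.e.\ Mutanguha's fixed-subgroup algorithm applied to $\varphi=\phi^n$, together with Lemma \ref{lem:TwCC}) never uses injectivity or surjectivity of $\phi$, so the $n>0$ case needs no reduction at all -- the ``non-surjective'' hypothesis in the statement of Proposition \ref{prop:ExtendedTwCP} is not actually used, which is presumably what pushed you toward a reduction. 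Your reduction is nevertheless sound: passing to $(F',\pi,\overline{\phi})$ via Lemma \ref{lem:inj-to-non-inj}, applying $\pi$ for one direction, and for the converse absorbing the element $k\in\ker\pi=\ker\phi$ by applying $\phi$ once (hence the shift to $(p'+1,q'+1)$) is exactly the bookkeeping the paper itself performs in Theorem \ref{thm:BCA1} and Proposition \ref{prop:BCA2}, including the identification $\ker\pi=\ker\phi$, and your twisted-conjugacy version of it checks out; note too that you get away without the malnormality step of Proposition \ref{prop:BCA2} because you only push identities forward through $\pi$ and work with conjugators already in $F'$. What your approach buys is a uniform template (everything funnels through the injective case, Proposition \ref{prop:UltraTwCPINJECTIVE}); what it costs is redundancy, since for $n>0$ that proposition itself just calls Proposition \ref{prop:ExtendedTwCP}, so the retraction machinery and the exponent shift are doing no real work there.
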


\begin{proof}
If $n=0$ then the result holds by Proposition \ref{prop:BCA2}.
%If $n>0$ then the result holds by combining Lemma \ref{lem:TwCC} with Proposition \ref{prop:ExtendedTwCP}, as in the proof of Proposition \ref{prop:UltraTwCPINJECTIVE}.
If $n>0$ then
by Proposition \ref{prop:ExtendedTwCP}, we can determine if there exists a pair $(p', q')\in \mathbb{N}_0\times\mathbb{N}_0$ such that $\phi^{p'}(u)$ is $\phi^n$-twisted conjugate to $\phi^{q'}(v)$.
%If such a pair exists, then by Lemma \ref{lem:TwCC} there exists a pair with the required properties (for example, $(p, q)=(p'+q'n, q')$ works).
\end{proof}

\subsection{Brinkmann's equality algorithm}
In his paper, Brinkmann first proves his conjugacy algorithm, and then applies a standard trick to prove the equality algorithm \cite[Section 6]{Brinkmann2010Detecting}.
We apply the same trick to Theorem \ref{thm:BCA1} and Proposition \ref{prop:BCA2}.

\begin{theorem}
\label{thm:BEA1}
There exists an algorithm with input an endomorphism $\phi$ of a free group $F$ and two elements $u, v\in F$, and output either a pair $(q, k)\in \mathbb{N}_0\times\ker\phi$ such that $\phi^p(u)=vk$, or $\mathrm{no}$ if no such exponent exists.
\end{theorem}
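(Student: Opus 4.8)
The plan is to imitate the passage from Brinkmann's conjugacy algorithm to his equality algorithm via the Feighn–Handel trick, exactly as the paragraph preceding the statement promises, but applied to Theorem \ref{thm:BCA1}. The trick is the standard one: introduce an extra generator that records the conjugator, so that a conjugacy statement in an enlarged free group becomes an equality statement in the original one. Concretely, let $\Sigma$ be a basis of $F$, let $F^+ = F(\Sigma \cup \{s\})$ be the free group on one extra generator $s$, and extend $\phi$ to $\phi^+ : F^+ \to F^+$ by setting $\phi^+|_F = \phi$ and $\phi^+(s) = s$ (so $s$ is fixed). Then for $u, v \in F$ the statement ``$\phi^p(u) = vk$ for some $k \in \ker\phi$'' should be recast as a statement about $\phi^+$-conjugacy of $u$ to a suitable word built from $v$ and $s$.

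First I would pin down the right enlarged-group statement. Since $s$ is $\phi^+$-fixed, $(\phi^+)^p(u) = \phi^p(u)$ and $(\phi^+)^p(su) = s\,\phi^p(u)$. The key observation is that for $w, w' \in F$, we have $w \sim w'$ in $F^+$ with a conjugator lying in $F$ iff $w \sim w'$ in $F$ (malnormality/ the standard normal-form argument for free products, since $F$ is a free factor of $F^+$), and more usefully $sw \sim sw'$ in $F^+$ (conjugating by an element of $F$) iff $w \sim w'$ in $F$, while conjugating $sw$ by $s^m$ merely cyclically shifts nothing new in. So I would apply the algorithm of Theorem \ref{thm:BCA1} to the endomorphism $\phi^+$ and the pair of elements $u$ and $sv$ (or, symmetrically, $su$ and $v$ — the exact bookkeeping I would fix once the normal forms are written out), asking for $(q,k^+) \in \mathbb{N}_0 \times \ker\phi^+$ with $(\phi^+)^q(u) \sim sv k^+$. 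Because $\ker\phi^+ = \ker\phi \leq F$ (the only new generator is fixed, hence injective on the $s$-part), any such $k^+$ actually lies in $\ker\phi \leq F$, so the extra generator does not leak into the kernel.

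The main step is then to translate a positive $\phi^+$-conjugacy answer back into the desired equality. If $(\phi^+)^q(u) = x^{-1}(svk)x$ with $x \in F^+$ and $k \in \ker\phi$, then projecting to the $s$-exponent forces $x$ to have $s$-exponent $0$ after reduction, and a short normal-form analysis in the free product (tracking how an $s$ conjugated by an element of $F$ can only be cancelled if the conjugator itself involves $s$) forces $x \in F$ and, crucially, $\phi^q(u) = vk$ on the nose, i.e. the conjugator is trivial. Conversely, $\phi^q(u) = vk$ clearly yields $(\phi^+)^q(u) = svk \cdot (s)^{-1}\cdots$ — here I must be slightly careful and instead phrase the encoding so that equality $\phi^q(u)=vk$ is literally $svk = s\phi^q(u) = (\phi^+)^q(su)$, which is an honest equality, hence certainly a (trivial) conjugacy; so running Theorem \ref{thm:BCA1} on $su$ and $sv$ and detecting when the returned conjugator can be taken trivial is the cleaner route. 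Thus the algorithm is: form $\phi^+$, run the algorithm of Theorem \ref{thm:BCA1} on $(su, sv)$, and post-process the output — if it returns $(q, k^+)$ with the witnessing conjugacy forced to be an equality (which the normal-form analysis guarantees is the only possibility when the elements start with $s$), output $(q, k^+)$; otherwise output $\mathrm{no}$.

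I expect the main obstacle to be the normal-form bookkeeping in $F^+ = F \ast \langle s \rangle$: one has to argue rigorously that a relation $(\phi^+)^q(su) \sim sv k$ in $F^+$ with $s$ appearing exactly once (and fixed by $\phi^+$) can only hold with a conjugator in $F$ that is in fact trivial, so that the conjugacy statement collapses to the equality $\phi^q(u) = vk$. This is the analogue of the Feighn–Handel observation and is elementary, but it is the one place where care is needed; everything else is a direct invocation of Theorem \ref{thm:BCA1} together with the identifications $\ker\phi^+ = \ker\phi$ and ``$F$ is a free factor of $F^+$.'' (The statement as printed writes $\phi^p(u) = vk$ with a free-floating $p$ while the output pair is named $(q,k)$; I would simply rename so the exponent in the conclusion matches the exponent in the output, i.e.\ output $(q,k)$ with $\phi^q(u) = vk$.)
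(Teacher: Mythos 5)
Your overall strategy is exactly the paper's: adjoin a marker letter fixed by the extended endomorphism (the paper uses $F_a=F\ast\langle a\rangle$, $\varphi(a)=a$, and the elements $ua$, $va$ rather than $su$, $sv$ -- an immaterial difference), observe that for $w,w'\in F$ the elements $aw$-type words are conjugate in the free product iff the $F$-parts are equal, and then invoke Theorem \ref{thm:BCA1}. So the route is the same; the issue is one concrete false claim on which your post-processing rests. You assert $\ker\phi^+=\ker\phi$ ``since the only new generator is fixed''. This is wrong whenever $\phi$ is non-injective, which is the only case in which the theorem's $\ker\phi$-slack matters: the kernel of $\phi^+\colon F\ast\langle s\rangle\to F\ast\langle s\rangle$ is the normal closure of $\ker\phi$ in $F\ast\langle s\rangle$, so for any $1\neq f\in\ker\phi$ the element $sfs^{-1}$ lies in $\ker\phi^+\setminus F$. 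Consequently the pair $(q,k^+)$ returned by Theorem \ref{thm:BCA1} may have $k^+\notin F$; then $svk^+$ is not of the form $s\cdot(\text{element of }F)$, your normal-form collapse (``the conjugator must be trivial, so conjugacy is equality'') does not apply to it, and outputting $(q,k^+)$ would violate the requirement $k\in\ker\phi$ in the statement.

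The gap is repairable in one line, and this is essentially the bookkeeping the paper leaves implicit: given a returned pair $(q,k^+)$ with $(\phi^+)^q(su)\sim sv\,k^+$, apply $\phi^+$ to both sides. Since $\phi^+(k^+)=1$, this gives $s\,\phi^{q+1}(u)\sim s\,\phi(v)$, and now both elements do have the form $s\cdot(\text{element of }F)$, so your normal-form argument yields $\phi^{q+1}(u)=\phi(v)$. Hence $k:=v^{-1}\phi^q(u)$ satisfies $\phi(k)=1$ and $\phi^q(u)=vk$, so the correct output is $(q,\,v^{-1}\phi^q(u))$ (equivalently: for the returned exponent $q$, compute this $k$ and verify $\phi(k)=1$), not $(q,k^+)$. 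The converse direction you state is fine: a solution $\phi^q(u)=vk$ with $k\in\ker\phi\subseteq\ker\phi^+$ gives an honest equality, hence a positive answer from Theorem \ref{thm:BCA1}. With this patch your argument coincides with the paper's proof of Theorem \ref{thm:BEA1}.
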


\begin{proof}
Let $F_a := F\ast \langle a\rangle$ and define $\varphi:F_a\to F_a$ by letting $\varphi(x) = \phi(x)$ if $x \in F$ and $\varphi(a) = a$.
If $w \in F$, then $wa$ is cyclically reduced in $F_a$, so that $\phi^p(u)=vk$ if and only if $\varphi^p(ua)\sim vka$.
Now Theorem \ref{thm:BCA1} completes the proof.
\end{proof}

Removing conjugacy from Theorem \ref{thm:UltraTwCP} means we are in the setting of Proposition \ref{prop:BCA2}.

\begin{theorem}[Theorem \ref{thm:equality}]
\label{thm:equalityBODY}
There exists an algorithm with input an endomorphism $\phi$ of a free group $F$ and two elements $u, v\in F$, and output either a pair $(p, q)\in \mathbb{N}_0\times\mathbb{N}_0$
%with $p\geq q$
such that $\phi^p(u)=\phi^q(v)$, or $\mathrm{no}$ if no such pair exists.
\end{theorem}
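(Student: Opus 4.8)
The plan is to mimic exactly the trick used in the proof of Theorem~\ref{thm:BEA1}, but feeding the resulting elements into Proposition~\ref{prop:BCA2} rather than Theorem~\ref{thm:BCA1}, so that the annoying ``$k\in\ker\phi$'' term gets absorbed for free. First I would set $F_a:=F\ast\langle a\rangle$ and extend $\phi$ to the endomorphism $\varphi:F_a\to F_a$ by $\varphi|_F=\phi$ and $\varphi(a)=a$. The point of adjoining the fixed letter $a$ is the standard observation that for $w\in F$ the element $wa$ is cyclically reduced in the free product $F_a$, and moreover two such elements $w_1a$ and $w_2a$ with $w_1,w_2\in F$ are conjugate in $F_a$ if and only if $w_1=w_2$. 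Since $\varphi^p(ua)=\phi^p(u)\,a$ and $\varphi^q(va)=\phi^q(v)\,a$ both lie in this ``$F\cdot a$'' form, we get the equivalence
\[
\phi^p(u)=\phi^q(v)\iff \varphi^p(ua)\sim\varphi^q(va)\ \text{ in }F_a.
\]
Thus a pair $(p,q)$ realising the equality on the left exists if and only if a pair realising the conjugacy on the right exists, and the latter is exactly the question answered by Proposition~\ref{prop:BCA2} applied to $\varphi$, $ua$, $va$.

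The algorithm is therefore: construct $F_a$ and $\varphi$ (both explicitly computable from $\phi$), run the algorithm of Proposition~\ref{prop:BCA2} on input $\varphi, ua, va$, and return its output verbatim — either the pair $(p,q)$ it produces (which then satisfies $\phi^p(u)=\phi^q(v)$ by the equivalence above), or $\mathrm{no}$.

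The only genuine point requiring care — and the step I would expect a referee to scrutinise — is the claim that $\varphi^p(ua)\sim\varphi^q(va)$ forces $\phi^p(u)=\phi^q(v)$ and not merely $\phi^p(u)\sim\phi^q(v)$. This is where the cyclic reducedness of $wa$ in the free product is essential: a cyclically reduced element of a free product is conjugate only to its cyclic permutations, and the only cyclic permutations of $wa$ (as a syllable sequence, up to conjugacy) that again have the shape ``(element of $F$) followed by $a$'' is $wa$ itself. Hence conjugacy of $\phi^p(u)a$ and $\phi^q(v)a$ in $F_a$ collapses to literal equality $\phi^p(u)=\phi^q(v)$ in $F$. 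I would state this one-line normal-form fact about free products explicitly (or cite it) and otherwise the proof is immediate; no further obstacles arise, as all the computational content is already packaged inside Proposition~\ref{prop:BCA2}.

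\begin{proof}
Let $F_a := F\ast \langle a\rangle$ and define $\varphi:F_a\to F_a$ by $\varphi(x)=\phi(x)$ for $x\in F$ and $\varphi(a)=a$.
For any $w\in F$ the element $wa$ is cyclically reduced in the free product $F_a$, and the only element of the form $w'a$ with $w'\in F$ that is conjugate to $wa$ in $F_a$ is $wa$ itself (a cyclically reduced element of a free product is conjugate precisely to its cyclic permutations, and no nontrivial cyclic permutation of the syllable sequence for $wa$ again lies in $F\cdot a$).
Since $\varphi^p(ua)=\phi^p(u)\,a$ and $\varphi^q(va)=\phi^q(v)\,a$, it follows that for $p,q\in\mathbb{N}_0$ we have $\phi^p(u)=\phi^q(v)$ in $F$ if and only if $\varphi^p(ua)\sim\varphi^q(va)$ in $F_a$.
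The algorithm is therefore to construct $F_a$ and $\varphi$, and run the algorithm of Proposition \ref{prop:BCA2} on input $\varphi$, $ua$, $va$: if it outputs a pair $(p,q)$ then $\phi^p(u)=\phi^q(v)$ by the equivalence above, and if it outputs $\mathrm{no}$ then no such pair exists.
\end{proof}
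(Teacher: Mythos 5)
Your proposal is correct and is essentially the paper's own proof: the same construction of $F_a=F\ast\langle a\rangle$ with $\varphi(a)=a$, the same reduction of $\phi^p(u)=\phi^q(v)$ to $\varphi^p(ua)\sim\varphi^q(va)$ via cyclic reducedness of $wa$, and the same appeal to Proposition~\ref{prop:BCA2}. The only difference is that you spell out the free-product normal-form argument that conjugacy of $\phi^p(u)a$ and $\phi^q(v)a$ forces equality, which the paper leaves implicit; that is a harmless (indeed welcome) elaboration.
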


\begin{proof}
Define $\varphi:F_a\to F_a$ as in the proof of Theorem \ref{thm:BEA1}.
If $w \in F$, then $wa$ is cyclically reduced in $F_a$, so that $\phi^p(u) = \phi^q(v)$ if and only if $\varphi^p(ua)\sim \varphi^q(va)$.
Now Proposition \ref{prop:BCA2} completes the proof.
\end{proof}

%%%%%%%%%%%%%%%%%%%%%%%%%%%%%%%%%%%%%%%%%%%%%
%%%%%%%%%%%%%%%%%%%%%%%%%%%%%%%%%%%%%%%%%%%%%
%------------------------------------------------------------------------------------------------------------------------
%------------------------------------------------------------------------------------------------------------------------
%---------------------------------------------------main proof--------------------------------------------------------
%------------------------------------------------------------------------------------------------------------------------
%------------------------------------------------------------------------------------------------------------------------
%%%%%%%%%%%%%%%%%%%%%%%%%%%%%%%%%%%%%%%%%%%%%
%%%%%%%%%%%%%%%%%%%%%%%%%%%%%%%%%%%%%%%%%%%%%

\section{Ascending HNN-extensions of free groups}
\label{sec:MainProofs}
We now prove our main result, Theorem \ref{thm:CP}, which states that ascending HNN-extensions of free groups have decidable Conjugacy Problem.
In free-by-cyclic groups $F\rtimes\langle t\rangle$, elements have the form $xt^k$ for $x\in F$ and $k\in\mathbb{Z}$. For ascending HNN-extensions our general form is slightly more complicated, with elements of $G=\langle F, t\mid t^{-1}xt=\phi(x), x\in F\rangle$ having the form $t^{i}xt^{-j}$ with $x\in F$, $i, j\in\mathbb{N}_0$. 

\begin{theorem}[Theorem \ref{thm:CP}]
\label{thm:CPBODY}
There exists an algorithm to solve the Conjugacy Problem in ascending HNN-extensions of finitely generated free groups.
\end{theorem}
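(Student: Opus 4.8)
The plan is to reduce the Conjugacy Problem in $G = F\ast_\phi$ to the dynamical results already established, in particular Proposition \ref{prop:UltraTwCPINJECTIVE} (the injective version of Theorem \ref{thm:UltraTwCP}). First I would set up normal forms: as noted, every element of $G$ can be written as $t^{i} x t^{-j}$ with $x \in F$, $i, j \in \mathbb{N}_0$. Conjugating $t^i x t^{-j}$ by $t$ gives $t^{i+1} x t^{-j-1}$, and if $x \in \phi(F) = t^{-1}Ft$ one can conjugate by $t^{-1}$ to decrease $i$ and $j$ simultaneously (i.e.\ $t^{-1}(t^i x t^{-j})t = t^{i-1}\phi^{-1}(x)\ldots$ appropriately when $i,j \ge 1$). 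So I would first normalise each input element, using injectivity of $\phi$ to pull back through $\phi$ as far as possible, to reach a form where the ``defect'' $i - j$ is an invariant and where $x \notin \phi(F)$ unless forced. The integer $i - j$ (the $t$-exponent, i.e.\ the image in $G/[G,G]'$ or more simply under the retraction $G \to \mathbb{Z}$ sending $F \to 0$, $t \mapsto 1$) is a conjugacy invariant, so two elements with different $t$-exponents are not conjugate; this splits the problem into two cases.

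Case 1 is the \emph{$t$-exponent zero} case: both elements are conjugate into $F$, say to $u, v \in F$. Here the standard Collins'-lemma-style analysis of conjugacy in HNN-extensions applies: $u \sim_G v$ if and only if there is a sequence of ``pinches'' relating them, which for an \emph{ascending} HNN-extension unwinds to the statement that $u$ and $v$ are related by some $\phi^p(u) \sim_F \phi^q(v)$ — i.e.\ after pushing both down by powers of $\phi$ (equivalently conjugating by powers of $t$) they become conjugate in $F$. This is exactly what Proposition \ref{prop:UltraTwCPINJECTIVE} decides with $n = 0$. I would need to be a little careful that elements of $F$ can also be conjugated using $t$-syllables that go up and then come back down, but the Britton's-lemma normal form for ascending HNN-extensions (every $t^{-1}$ must be ``absorbed'' via $t^{-1}xt = \phi(x)$) forces any such conjugacy to reduce to the $\phi^p(u)\sim_F\phi^q(v)$ shape.

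Case 2 is the \emph{nonzero $t$-exponent} case: after normalising, both elements have the form $g = t^i x t^{-j}$ with, say, $i - j = \ell \ne 0$; WLOG $\ell > 0$ (otherwise invert). Conjugating such an element around, the cyclic word structure means $g$ is conjugate to $t^{\ell} w$ for a suitable $w \in F$ after moving all the negative $t$'s through; and conjugating $t^\ell w$ by an element $y \in F$ gives $t^\ell \cdot \phi^\ell(y)^{-1} w y$ (using $t^\ell y = \phi^\ell(y) t^\ell$), so conjugacy within $F$-conjugators is exactly \emph{$\phi^\ell$-twisted conjugacy} of the $F$-parts. Conjugating additionally by powers of $t$ replaces $w$ by $\phi(w)$ (up to the twisted class), so the full conjugacy condition for two elements $t^\ell w_1$, $t^\ell w_2$ becomes: there exist $p, q \in \mathbb{N}_0$ with $\phi^p(w_1)$ being $\phi^\ell$-twisted conjugate to $\phi^q(w_2)$. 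This is precisely decided by Proposition \ref{prop:UltraTwCPINJECTIVE} with $n = \ell$. The algorithm is then: compute the $t$-exponents; if unequal, output ``not conjugate''; if both zero, extract the $F$-parts and call Proposition \ref{prop:UltraTwCPINJECTIVE} with $n=0$; if both equal to $\ell \ne 0$, normalise to the form $t^{|\ell|}w_i$, extract the $w_i$, and call Proposition \ref{prop:UltraTwCPINJECTIVE} with $n = |\ell|$.

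The main obstacle I anticipate is the \emph{bookkeeping of normal forms}: pinning down exactly which conjugations are available in an ascending HNN-extension and proving that the ``obvious'' reductions (conjugate by $t$, conjugate by an element of $F$) generate all conjugations in the relevant sense — i.e.\ a Collins'-lemma / Britton's-lemma analysis adapted to the ascending case. One has to handle the asymmetry carefully ($t^{-1}Ft = \phi(F) \subsetneq F$ but $tFt^{-1}$ is \emph{not} contained in $F$), track when an element genuinely requires positive $t$-syllables versus when these can be absorbed, and ensure the reduction to a clean $t^\ell w$ (respectively $u \in F$) form is algorithmic. Once that combinatorial groundwork is laid, everything else is a direct appeal to Proposition \ref{prop:UltraTwCPINJECTIVE}, which already packages the hard dynamical content.
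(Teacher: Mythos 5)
Your proposal is essentially the paper's proof: normalise via the form $t^{i}xt^{-j}$, use the retraction $F\ast_\phi\onto\langle t\rangle$ to see the $t$-exponent is a conjugacy invariant, and reduce conjugacy of elements with common exponent $n$ to the existence of $p,q\in\mathbb{N}_0$ with $\phi^p$ of one $F$-part $\phi^{n}$-twisted conjugate to $\phi^q$ of the other, which Proposition \ref{prop:UltraTwCPINJECTIVE} decides (your case split $n=0$ versus $n\neq 0$ is unnecessary, since the proposition already covers $n=0$ uniformly).

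The one place you stop short --- the ``Collins'/Britton's lemma bookkeeping'' you flag as the main obstacle --- is in fact not needed, and this is exactly how the paper closes the argument: since \emph{every} element of $F\ast_\phi$, in particular every potential conjugator, has the form $t^{q}xt^{-p}$ with $x\in F$ and $p,q\in\mathbb{N}_0$, one simply writes out $ut^{-n}=(t^{q}xt^{-p})^{-1}\,vt^{-n}\,t^{q}xt^{-p}$ and pushes the $t$-powers through using $t^{-1}xt=\phi(x)$, arriving directly at $\phi^p(u)=x^{-1}\phi^{q}(v)\phi^{n}(x)$. No normal-form uniqueness or pinch analysis is required; the exponents $p,q$ in the proposition absorb precisely the $t$-syllables of the conjugator, so the ``conjugate by $t$'' moves you describe separately are already accounted for. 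Two small slips to fix: the relation should read $yt^{\ell}=t^{\ell}\phi^{\ell}(y)$ (not $t^{\ell}y=\phi^{\ell}(y)t^{\ell}$), and with your normalisation $t^{\ell}w$ the $F$-conjugation relation comes out as $w_2=\phi^{\ell}(y)^{-1}w_1y$, which is the mirror of the paper's twisted-conjugacy convention $u=x^{-1}v\phi^{\ell}(x)$; this is harmless (pass to inverses, or normalise as $wt^{-n}$ with $n\geq0$ as the paper does), but it should be stated.
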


\begin{proof}
Let $F\ast_{\phi}$ be the ascending HNN-extension of the finitely generated free group $F$ induced by the injective endomorphism $\phi:F\to F$.
Let $g'=t^{m_0}ut^{-m_1}$ and $h'=t^{n_0}vt^{-n_1}$, $m_0, m_1, n_0, n_1\in\mathbb{N}_0$ and $u, v\in F$, be our input elements.
The first step of the algorithm is to cyclically shift these elements so that they are of the form
$g=ut^{-m}$ and $h=vt^{-n}$ with $m, n\in\mathbb{Z}$ and $u, v\in F$ (so $m:=m_1-m_0$ and $n:=n_1-n_0$).
By considering the retraction map $F\ast_{\phi}\twoheadrightarrow\langle t\rangle$, which has $F$ in the kernel, we see that if $g$ and $h$ are conjugate then $m=n$.
Therefore, the second step of the algorithm is to compute $m$ and $n$, and if they are non-equal output that the elements are non-conjugate.
We now assume that $g=ut^{-n}$ and $h=vt^{-n}$, and furthermore that $n\geq0$, as if $n<0$ we consider $u^{-1}g^{-1}u$ and $v^{-1}h^{-1}v$ instead.

Then $g$ and $h$ are conjugate if and only if there exists some triple $(x, p, q)\in F\times\mathbb{N}_0\times\mathbb{N}_0$ such that
$g=(t^qxt^{-p})^{-1}\cdot h\cdot t^qxt^{-p}$.
We then have the following sequence of equivalent identities.
\begin{align*}
ut^{-n}&=(t^qxt^{-p})^{-1}\cdot vt^{-n}\cdot t^qxt^{-p}\\
t^{-p}ut^{p-n}&=x^{-1}t^{-q}vt^{q-n}x\\
\phi^p(u)t^{-n}&=x^{-1}\phi^{q}(v)t^{-n}x\\
\phi^p(u)t^{-n}&=x^{-1}\phi^{q}(v)\phi^n(x)t^{-n}\\
\phi^p(u)&=x^{-1}\phi^{q}(v)\phi^n(x)
\end{align*}
Therefore, to test that $g$ and $h$ are conjugate it is sufficient to input the endomorphism $\phi$, the number $n\in\mathbb{N}_0$ and the elements $u, v\in F$ into the algorithm of Proposition \ref{prop:UltraTwCPINJECTIVE},
with the elements being conjugate in $F\ast_{\phi}$ if and only if a pair $(p, q)$ is output.
\end{proof}

%%%%%%%%%%%%%%%%%%%%%%%%%%%%%%%%%%%%%%%%%%%%%
%%%%%%%%%%%%%%%%%%%%%%%%%%%%%%%%%%%%%%%%%%%%%
%------------------------------------------------------------------------------------------------------------------------
%------------------------------------------------------------------------------------------------------------------------
%---------------------------------------------------one relator motivation------------------------------------------
%------------------------------------------------------------------------------------------------------------------------
%------------------------------------------------------------------------------------------------------------------------
%%%%%%%%%%%%%%%%%%%%%%%%%%%%%%%%%%%%%%%%%%%%%
%%%%%%%%%%%%%%%%%%%%%%%%%%%%%%%%%%%%%%%%%%%%%

\section{On Dru\c{t}u and Sapir's one-relator group}
\label{sec:oneRel}

Consider the one-relator group $G:=\langle a, t\mid t^2at^{-2}a^{-2}\rangle$, which was proven by Dru\c{t}u and Sapir to be residually finite but not linear \cite[Theorem 4]{Drutu2005Nonlinear}.
We now prove that the standard routes to resolving the Conjugacy Problem, as well as previous results on the Conjugacy Problem for one-relator groups, as cited in the introduction, do not apply to this group.
Once the relevant definitions are given, it is clear that the results cited in the introduction are not immediately applicable to the presentation $\langle a, t\mid t^2at^{-2}a^{-2}\rangle$; we go further and check that no one-relator presentation of $G$ is covered by these results.
However, this group is an ascending HNN-extensions of $F(a, b)$, with $G\cong\langle a, b, t\mid tat^{-1}=b, tbt^{-1}=a^2\rangle$, and hence $G$ has decidable Conjugacy Problem by Theorem \ref{thm:CP}.

\p{Semi-hyperbolicity}
Hyperbolic, biautomatic and CAT(0) groups have decidable Conjugacy Problem.
One reason for this is that these groups are all semi-hyperbolic \cite[Examples 2.1--2.3]{Alonso1995semihyperbolic}, and semi-hyperbolic groups have decidable Conjugacy Problem \cite[Theorem 5.1]{Alonso1995semihyperbolic}.
We therefore prove that the group $G$ is not semi-hyperbolic, which eliminates this standard route to resolving the Conjugacy Problem.
The \emph{Baumslag--Solitar group $\BS(m, n)$} is the group with presentation $\langle x, y\mid y^{-1}x^my=x^n\rangle$, $m, n\neq0$, and containing some $\BS(1, n)$ with $|n|>1$ as a subgroup is an obstruction to being semi-hyperbolic \cite[Proposition 7.17]{Alonso1995semihyperbolic}.

\begin{lemma}
\label{lem:hyperbolic}
$G$ contains a subgroup isomorphic to $\BS(1, 2)$, and hence is not semi-hyperbolic.
\end{lemma}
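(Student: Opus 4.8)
The plan is to exhibit $\BS(1,2)$ explicitly as the subgroup $\langle a, t^2\rangle$ of $G$, using the presentation $G\cong\langle a,b,t\mid tat^{-1}=b,\ tbt^{-1}=a^2\rangle$ recorded above, which realises $G$ as the ascending HNN-extension of $F:=\langle a,b\rangle\cong F(a,b)$ with stable letter $t$ and monomorphism $\psi\colon a\mapsto b,\ b\mapsto a^2$. The first step is the computation $t^2at^{-2}=t(tat^{-1})t^{-1}=tbt^{-1}=a^2$. Setting $x:=a$ and $y:=t^{-2}$, the relation $y^{-1}xy=x^2$ therefore holds in $G$, so there is a surjection $\theta\colon\BS(1,2)=\langle x,y\mid y^{-1}xy=x^2\rangle\onto\langle a,t^2\rangle$. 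Once I show $\theta$ is injective the lemma follows: $\langle a,t^2\rangle\cong\BS(1,2)=\BS(1,n)$ with $|n|=2>1$, so $G$ is not semi-hyperbolic by \cite[Proposition 7.17]{Alonso1995semihyperbolic}.

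To prove $\theta$ injective, I would first put an arbitrary element of $\BS(1,2)$ into the Britton normal form $y^ix^my^{-k}$ coming from its HNN-decomposition over $\langle x\rangle$ (associated subgroups $\langle x\rangle$ and $\langle x^2\rangle$): here $i,k\in\mathbb{N}_0$ and $m\in\mathbb{Z}$, with $m$ odd whenever $i,k>0$, and such a word represents the identity only when $i=k=0$ and $m=0$. Its $\theta$-image is represented in $G$ by the word $W:=t^{-2i}a^mt^{2k}$. The crux is to check that $W$ is Britton-reduced for the ascending HNN-decomposition of $G$ over $F$, whose associated subgroups are $F$ itself and $\im\psi=\langle b,a^2\rangle$: the only candidate pinch in $W$ sits at the single junction $t^{-1}a^mt$ between the block of $2i$ letters $t^{-1}$ and the block of $2k$ letters $t$ (all other junctions lie between two equally signed $t$-letters, hence are not of pinch type), and this is an actual pinch exactly when $a^m\in\im\psi$, i.e. when $m$ is even, which is excluded once $i,k>0$. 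Hence by Britton's Lemma $W$ is non-trivial in $G$ whenever $i+k\geq1$; and if $i=k=0$ then $W=a^m\ne1$ for $m\ne0$, since $F$ embeds in $G$. So $\ker\theta=\{1\}$.

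The main obstacle is simply the careful bookkeeping in this translation of normal forms: one must pin down that the associated subgroup on the $t^{-1}$-side of $G$ is precisely $\im\psi=\langle b,a^2\rangle$, so that ``the $F$-syllable $a^m$ lies in an associated subgroup'' becomes ``$m$ is even'', matching the parity constraint defining the $\BS(1,2)$ normal form; and one must note that replacing each $y^{\pm1}$ of the $\BS(1,2)$ word by the two-letter block $t^{\mp2}$ creates no spurious pinch, because the two $t$-letters arising from one $y^{\pm1}$ always carry the same sign. An alternative to the Britton argument is to identify $\langle a,t^2\rangle$ directly with $\mathbb{Z}[1/2]\rtimes_2\mathbb{Z}$: its normal subgroup $\langle\langle a\rangle\rangle$ is the increasing union of the infinite cyclic groups $t^{-2i}\langle a\rangle t^{2i}$, each of index $2$ in the next since $t^2at^{-2}=a^2$, hence is isomorphic to $\mathbb{Z}[1/2]$; the quotient is $\langle t^2\rangle\cong\mathbb{Z}$, since $t^2$ has infinite-order image under the retraction $G\onto\langle t\rangle$; and conjugation by $t^2$ acts on $\mathbb{Z}[1/2]$ as multiplication by $2$ — which is exactly $\BS(1,2)$.
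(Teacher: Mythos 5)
Your proof is correct, but it takes a genuinely different route from the paper's. The paper rewrites the defining presentation by a Tietze transformation as $\langle a,s,t\mid sas^{-1}=a^2,\ s=t^2\rangle$, recognises this as the free product of $\BS(1,2)=\langle a,s\rangle$ and $\mathbb{Z}=\langle t\rangle$ amalgamated along $s=t^2$, and invokes the fact that vertex groups embed in amalgams; this identifies $\langle a,t^2\rangle\cong\BS(1,2)$ in a few lines with no normal-form computation. You instead work inside the ascending HNN-structure of $G$ over $F(a,b)$ and check via Britton's Lemma that the obvious surjection $\BS(1,2)\onto\langle a,t^2\rangle$ is injective. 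Your pinch analysis is sound: the associated subgroup on the relevant side is $\im\psi=\langle b,a^2\rangle$, and $a^m$ lies in it exactly when $m$ is even, which the constrained normal form $y^ix^my^{-k}$ (with $m$ odd when $i,k>0$) rules out; the corner cases $i=0$ or $k=0$ are also fine since then $W$ has no sign change and hence no candidate pinch. What each approach buys: the paper's argument is shorter and needs only the standard embedding theorem for amalgamated products, while yours stays within the HNN/Britton framework used throughout the paper (compare the normal form $t^ixt^{-j}$ in Section \ref{sec:MainProofs}) and makes the parity mechanism producing the $\BS(1,2)$ subgroup explicit. Your closing sketch identifying $\langle a,t^2\rangle$ with $\mathbb{Z}[1/2]\rtimes_2\mathbb{Z}$ is a third viable route, but as stated it would need a bit more justification (that the normal closure of $a$ in $\langle a,t^2\rangle$ is exactly the ascending union of the conjugates $t^{-2i}\langle a\rangle t^{2i}$ and that the resulting extension splits with the claimed action) than the Britton argument you already give.
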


\begin{proof}
Consider the presentation $\mathcal{P}:=\langle a, s, t \mid sas^{-1} = a^2, s = t^2 \rangle$.
This represents a free product of $\BS(1, 2)=\langle a, s\rangle$ and $\mathbb{Z}=\langle t\rangle$, amalgamated along $s=t^2$, and so $\BS(1, 2)$ embeds into the group defined by $\mathcal{P}$.
Applying to $\mathcal{P}$ the Tietze transformation which removes the generator $s$ gives the original presentation for $G$, and so $\mathcal{P}$ defines $G$.
It follows that the subgroup $\langle a, t^2\rangle$ of $G$ is isomorphic to $\BS(1, 2)$.
\end{proof}

As one-relator groups with torsion are hyperbolic, the above means that Newman's result \cite{Newman1968some} does not apply to $G$.
The fact that the word $t^2at^{-2}a^{-2}$ is not a proper power of any element in $F(a, t)$ gives a different, more combinatorial obstruction to the use of Newman's result here \cite{Karrass1960elements}.

\p{Generalised Baumslag--Solitar groups}
A \emph{generalised Baumslag--Solitar (GBS) group} is a group which decomposes as a graph of groups with all edge and vertex groups infinite cyclic.
GBS groups have decidable Conjugacy Problem \cite{Lockhart1992conjugacy}.
We therefore prove that the group $G$ is not a GBS group, which eliminates this route to resolving the Conjugacy Problem.
Firstly, we re-phrase a result of Levitt \cite[Corollary 7.7]{Levitt2015Quotients}:

\begin{proposition}[Levitt]
	\label{prop:Levitt}
	A GBS group is linear if and only if it is residually finite.
\end{proposition}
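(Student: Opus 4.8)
The plan is to prove the two implications separately, with the forward direction essentially immediate and the reverse direction resting on Levitt's structural description of residually finite GBS groups. For the forward direction: a GBS group is finitely generated (indeed finitely presented, being the fundamental group of a finite graph of groups with finitely generated vertex and edge groups), and a finitely generated linear group is residually finite by Malcev's theorem. So it remains to show that a residually finite GBS group is linear.

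For this converse I would invoke Levitt's classification, which is the substantive input: writing $\Delta\colon G\to\mathbb{Q}^{\times}$ for the modular homomorphism, a residually finite GBS group $G$ is either \emph{unimodular} (meaning $\im\Delta\subseteq\{\pm1\}$), in which case $G$ has a finite-index subgroup isomorphic to $F_n\times\mathbb{Z}$ for some $n\in\mathbb{N}_0$, or else $G$ is isomorphic to a solvable Baumslag--Solitar group $\BS(1,m)$ with $|m|\geq2$. Granting this dichotomy, linearity follows in each case. In the unimodular case, $F_n\times\mathbb{Z}$ is linear, as a rank-$n$ free group embeds in $\operatorname{SL}_2(\mathbb{Z})$ and hence $F_n\times\mathbb{Z}\hookrightarrow\operatorname{GL}_2(\mathbb{Z})\times\operatorname{GL}_1(\mathbb{Z})$ block-diagonally; and a group containing a linear subgroup of finite index is itself linear over the same field, via the induced representation (which is faithful because the normal core of the subgroup is linear and the action on the induced module separates cosets); hence $G$ is linear. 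In the remaining case, $\BS(1,m)\cong\mathbb{Z}[1/m]\rtimes\mathbb{Z}$ embeds faithfully in $\operatorname{GL}_2(\mathbb{Q})$ via the affine representation sending the stable letter to $\left(\begin{smallmatrix}m&0\\0&1\end{smallmatrix}\right)$ and the base generator to $\left(\begin{smallmatrix}1&1\\0&1\end{smallmatrix}\right)$, so it is linear. Either way $G$ is linear, completing the argument.

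The main obstacle is Levitt's classification of which GBS groups are residually finite: this is exactly where the graph-of-groups structure and the modular homomorphism do the real work, and it is what one should cite rather than reprove. Everything surrounding it — Malcev's theorem, the explicit faithful representations of $F_n\times\mathbb{Z}$ and of $\BS(1,m)$, and the passage of linearity to finite-index overgroups — is entirely standard.
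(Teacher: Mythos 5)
Your proof is correct and follows essentially the same route as the paper: both directions rest on Malcev's theorem for finitely generated linear groups and on Levitt's classification of residually finite GBS groups (soluble $\BS(1,m)$ or virtually $F\times\mathbb{Z}$), with linearity checked directly in each case. The only difference is that you spell out the explicit representations and the finite-index induction step, which the paper leaves as standard facts.
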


\begin{proof}
	A GBS group is residually finite if and only if it is either soluble or virtually $F\times\mathbb{Z}$ for $F$ free \cite[Corollary 7.7]{Levitt2015Quotients}.
	Any $F\times\mathbb{Z}$ group is linear, and hence so is any virtually $F\times\mathbb{Z}$ group.
	A GBS group is soluble if and only if it is isomorphic to $\BS(1, n)$ for some $|n|>0$, and again these groups are linear.
	The result follows immediately.
\end{proof}

\begin{lemma}
	$G$ is not a GBS group
\end{lemma}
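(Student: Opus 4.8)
The plan is to argue by contradiction using the two properties of $G$ already recorded, together with Proposition \ref{prop:Levitt}. Recall that Dru\c{t}u and Sapir proved that $G=\langle a, t\mid t^2at^{-2}a^{-2}\rangle$ is residually finite but not linear \cite[Theorem 4]{Drutu2005Nonlinear}; these are properties of the group $G$ itself, not of a particular presentation, so they are available to us directly.

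First I would suppose, for contradiction, that $G$ is a GBS group. Then, since $G$ is residually finite, Proposition \ref{prop:Levitt} applies and tells us that $G$ is linear. This contradicts non-linearity of $G$, and so $G$ cannot be a GBS group.

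There is essentially no technical obstacle here: the work has all been done in establishing Proposition \ref{prop:Levitt} (which repackages Levitt's classification \cite[Corollary 7.7]{Levitt2015Quotients}) and in citing the Dru\c{t}u--Sapir results. The only point worth a sentence of care is that residual finiteness and non-linearity of $G$ are intrinsic, so it does not matter that $G$ is being presented variously as a one-relator group or as the ascending HNN-extension $\langle a, b, t\mid tat^{-1}=b,\ tbt^{-1}=a^2\rangle$; in either guise it is the same group, and the argument goes through unchanged. Hence the lemma follows immediately.
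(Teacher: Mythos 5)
Your proposal is correct and follows exactly the argument in the paper: invoke Dru\c{t}u--Sapir's result that $G$ is residually finite but not linear, and conclude via Proposition \ref{prop:Levitt} that $G$ cannot be a GBS group. Nothing further is needed.
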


\begin{proof}
	The group $G$ is residually finite but not linear \cite[Theorem 4]{Drutu2005Nonlinear}, so the result follows by Proposition \ref{prop:Levitt}.
\end{proof}

\p{Dehn functions}
HNN-extensions of free groups with Dehn function less than $n^2 \log n$ (with a somewhat technical definition of ``less than'') have decidable Conjugacy Problem \cite[Theorem 2.5]{olshanskii2006SmallDehn}.
As noted above, $G$ is an HNN-extension of a free group, so we now eliminate this Dehn function-based route to resolving the Conjugacy Problem as follows:

\begin{lemma}
$G$ has at least exponential Dehn function.
\end{lemma}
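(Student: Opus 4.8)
The plan is to transfer the exponential lower bound for the Dehn function of $\BS(1,2)$ to $G$, using the copy of $\BS(1,2)$ inside $G$ exhibited in Lemma~\ref{lem:hyperbolic}. Concretely, from the proof of that lemma, $G$ has the finite presentation $\langle a,s,t\mid sas^{-1}a^{-2},\ st^{-2}\rangle$, which extends the presentation $\langle a,s\mid sas^{-1}a^{-2}\rangle$ of $\BS(1,2)$ by one extra generator $t$ and one extra relator $st^{-2}$; in this picture $G\cong\BS(1,2)\ast_{\langle s\rangle=\langle t^{2}\rangle}\langle t\rangle$, an amalgam over the finitely generated (cyclic) edge group $\langle s\rangle=\langle t^{2}\rangle$.

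First I would observe that for any word $w$ in $a^{\pm1},s^{\pm1}$ that is trivial in $G$, the word $w$ is already trivial in $\BS(1,2)$ (as $\BS(1,2)$ embeds in $G$) and, moreover, $\operatorname{Area}_{G}(w)\geq\operatorname{Area}_{\BS(1,2)}(w)$. This is the standard fact that a vertex group of a finite graph of groups with finitely generated edge groups has Dehn function at most that of the ambient group: one takes a van Kampen diagram for $w$ over $\langle a,s,t\mid sas^{-1}a^{-2},\ st^{-2}\rangle$ and, using that the only extra relator $st^{-2}$ meets the $\BS(1,2)$-part in the single letter $s$ (and that $sas^{-1}a^{-2}$ involves no $t$), collapses all $t$-edges and $st^{-2}$-cells to obtain a van Kampen diagram over $\langle a,s\mid sas^{-1}a^{-2}\rangle$ without increasing the area. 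Taking the supremum over such $w$ of length $\leq n$ gives $\delta_{\BS(1,2)}\preceq\delta_{G}$. Since $\BS(1,2)$ has exponential Dehn function \cite{Groves2001Isoperimetric}, it follows that $\delta_{G}(n)\succeq 2^{n}$, so $G$ has at least exponential Dehn function.

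If one prefers an argument pointing to an explicit family of loops, one can run the same reduction on the words $w_{k}:=[\,a,\ s^{k}as^{-k}\,]$, $k\geq1$, where $s=t^{2}$. These have length $O(k)$, lie in the subgroup $\langle a,t^{2}\rangle\cong\BS(1,2)$, and are trivial in $G$ because $s^{k}as^{-k}=a^{2^{k}}$ commutes with $a$; the diagram-collapsing argument then gives $\operatorname{Area}_{G}(w_{k})\geq\operatorname{Area}_{\BS(1,2)}(w_{k})$, and the right-hand side is $\succeq 2^{k}$ by the classical exponential lower bound for these commutator words in $\BS(1,2)$. Either way, the one step that needs care is this transfer of the area lower bound from the vertex group $\BS(1,2)$ to the amalgam $G$, i.e.\ the collapsing of $t$-edges in a van Kampen diagram; everything else is immediate from Lemma~\ref{lem:hyperbolic} together with the known Dehn function of $\BS(1,2)$.
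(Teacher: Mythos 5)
The weak point is exactly the step you flag as needing care: the transfer of the area lower bound from the vertex group $\BS(1,2)$ to $G$. The ``standard fact'' you invoke --- that a vertex group of a splitting over a finitely generated edge group has Dehn function at most that of the ambient group --- is not a standard fact, and it is not true that finitely presented subgroups (even vertex groups of splittings over finitely generated, undistorted edge groups) inherit such bounds in general: area can be distorted in subgroups, vertex groups of amalgams over cyclic groups need not even be quasi-isometrically embedded, and finitely presented subgroups of hyperbolic groups can have strictly larger Dehn functions than the ambient group. So this step cannot be waved through as folklore; it is the whole content of the lemma.

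Your sketched justification also breaks down at the diagram level. Collapsing all $t$-edges in a van Kampen diagram over $\langle a,s,t\mid sas^{-1}a^{-2},\,st^{-2}\rangle$ turns each $st^{-2}$-cell into a monogon whose boundary is its single $s$-edge; such a cell is not a cell over $\langle a,s\mid sas^{-1}a^{-2}\rangle$, and disposing of it forces you to collapse that $s$-edge as well, which corrupts the labels of adjacent $sas^{-1}a^{-2}$-cells and possibly the boundary word $w$ (which contains $s$-letters). Group-theoretically the obstruction is that there is no retraction of $G$ onto $\langle a,s\rangle$ killing $t$, precisely because $s=t^{2}$: killing $t$ kills $s$ and then $a$, so the collapse is not induced by any homomorphism onto $\BS(1,2)$. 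One could try to salvage a direct argument via $t$-corridors (the $st^{-2}$-cells form $t$-annuli whose boundary circles are labelled by words in $s$ alone), but that requires genuine work and is not what you wrote. The paper avoids all of this by citing the one theorem that legitimizes the transfer: one-relator groups have cohomological dimension $2$, and by Gersten's theorem \cite[Theorem C]{Gersten1992Dehn} finitely presented subgroups of such groups have Dehn function bounded above by that of the ambient group; combined with Lemma \ref{lem:hyperbolic} and the exponential Dehn function of $\BS(1,2)$ \cite{Groves2001Isoperimetric}, this gives the claim. Either cite Gersten's theorem as the paper does, or supply a complete corridor argument in place of the collapsing step.
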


\begin{proof}
One-relator groups have cohomological dimension $2$, and hence Gersten's Theorem tells us that subgroups give lower bounds on the Dehn function \cite[Theorem C]{Gersten1992Dehn}.
Therefore, by Lemma \ref{lem:hyperbolic}, the Dehn function of $\BS(1, 2)$ gives a lower bound for the Dehn function of $G$.
The result follows as $\BS(1, 2)$ has exponential Dehn function \cite{Groves2001Isoperimetric}.
\end{proof}

\p{Cyclically and conjugacy pinched}
A group is a \emph{cyclically pinched one-relator group} if it admits a decomposition as a free product of two free groups amalgamated over a non-trivial cyclic subgroup.
Such a decomposition corresponds to a presentation $\langle \mathbf{x}, \mathbf{y}\mid U=V\rangle$ with $|\mathbf{x}|, |\mathbf{y}|\geq1$ and $U \in F(\mathbf{x})$, $V\in F(\mathbf{y})$.
Cyclically pinched groups have decidable Conjugacy Problem.

\begin{lemma}
$G$ is not a cyclically pinched one-relator group.
\end{lemma}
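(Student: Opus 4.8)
The plan is to obstruct every cyclically pinched presentation of $G$ using the first Betti number together with the non-linearity of $G$. So suppose for contradiction that $G\cong\langle\mathbf{x},\mathbf{y}\mid U=V\rangle$ is a cyclically pinched presentation: $m:=|\mathbf{x}|\geq1$, $n:=|\mathbf{y}|\geq1$, the words $U\in F(\mathbf{x})$ and $V\in F(\mathbf{y})$ are non-trivial, and $G=F(\mathbf{x})\ast_C F(\mathbf{y})$ is the amalgam over the infinite cyclic subgroup $C=\langle U\rangle=\langle V\rangle$.

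First I would compute $G^{ab}$. Abelianising the relator $t^2at^{-2}a^{-2}$ records the exponent sums of $t$ and of $a$, which are $0$ and $-1$, so $G^{ab}\cong\mathbb{Z}$ (generated by the image of $t$); in particular $b_1(G)=1$ and $G^{ab}$ is torsion-free. On the other hand, abelianising the cyclically pinched presentation gives $G^{ab}\cong\mathbb{Z}^{m+n}/\langle\overline{U}-\overline{V}\rangle$, whose free rank is $m+n$ if $\overline{U}=\overline{V}=0$ and $m+n-1$ otherwise. Since $m+n\geq2$, comparing with $b_1(G)=1$ forces $\overline{U}-\overline{V}\neq0$ and $m=n=1$, so $G\cong\langle x,y\mid x^p=y^q\rangle$ for some non-zero integers $p,q$. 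If $|p|=1$ or $|q|=1$ then $G$ would be infinite cyclic, which is impossible since $G$ is non-abelian — it contains $\BS(1,2)$ by Lemma \ref{lem:hyperbolic}. Hence $|p|,|q|\geq2$, and replacing $x$ or $y$ by an inverse if necessary we may assume $p,q\geq2$.

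Finally I would show that $\langle x,y\mid x^p=y^q\rangle$ with $p,q\geq2$ is linear, contradicting the fact that $G$ is not linear \cite[Theorem 4]{Drutu2005Nonlinear}. The element $z:=x^p=y^q$ is central, being central in each cyclic free factor of the amalgam, and $G/\langle z\rangle\cong\mathbb{Z}/p\mathbb{Z}\ast\mathbb{Z}/q\mathbb{Z}$ is virtually free, being a free product of finite groups. Take a free finite-index subgroup $K\leq G/\langle z\rangle$ and let $\widetilde{K}$ be its preimage in $G$, a finite-index subgroup fitting into a central extension $1\to\langle z\rangle\to\widetilde{K}\to K\to1$; since $H^2(K;\mathbb{Z})=0$ for $K$ free, this extension splits, so $\widetilde{K}\cong\mathbb{Z}\times K$. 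Thus $G$ is virtually $\mathbb{Z}\times F_r$ for some $r$, hence linear, and we have our contradiction; therefore $G$ is not a cyclically pinched one-relator group.

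The argument is really just the assembly of two standard ingredients — the Betti-number bookkeeping that collapses the pinched presentation to the form $\langle x,y\mid x^p=y^q\rangle$, and the well-known structure of these "generalised torus knot groups" as virtually $\mathbb{Z}\times F_r$ — so there is no serious obstacle; the only place needing a touch of care is excluding the degenerate exponents $|p|=1$ or $|q|=1$, for which non-abelianity of $G$ (already available via Lemma \ref{lem:hyperbolic}) suffices.
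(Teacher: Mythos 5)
Your argument is correct, but it follows a genuinely different route from the paper. You make the reduction to a two\nobreakdash-generator presentation via abelianisation: $G^{\mathrm{ab}}\cong\mathbb{Z}$ has free rank $1$, while a pinched presentation on $m+n\geq 3$ generators would force free rank at least $m+n-1\geq 2$; the paper instead quotes the Lyndon--Schupp result that a non-free one-relator group on $n$ generators cannot be generated by fewer than $n$ elements. Both land on $\langle x,y\mid x^p=y^q\rangle$, but the final contradictions differ: the paper applies Brown's algorithm to conclude that such a group is free-by-cyclic and then rules this out because $\BS(1,2)\leq G$ (its derived subgroup $\mathbb{Z}[1/2]$ cannot lie in a free group, so $G$ is not free-by-abelian), whereas you observe that $x^p=y^q$ is central with quotient $\mathbb{Z}/p\ast\mathbb{Z}/q$, so the group is virtually $\mathbb{Z}\times F_r$ and hence linear (the central extension over a free finite-index subgroup splits since $H^2(F_r;\mathbb{Z})=0$, and a faithful representation induces up faithfully from a finite-index subgroup), contradicting Dru\c{t}u--Sapir's non-linearity of $G$. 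Your route is arguably more self-contained --- it avoids Brown's algorithm and the one-relator rank theorem, and it runs on the same linearity-versus-residual-finiteness axis the paper itself uses for the GBS and conjugacy-pinched cases --- while the paper's version keeps the $\BS(1,2)$ subgroup as the single recurring obstruction and needs no discussion of linearity of amalgams. The degenerate cases ($|p|=1$ or $|q|=1$, excluded by non-abelianity) are handled correctly, so there is no gap.
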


\begin{proof}
A non-free $n$-generator one-relator group cannot be generated by fewer than $n$-elements \cite[II.5.11]{LyndonSchupp2001}.
Therefore, as $G$ is two-generated, if it admits a presentation $\langle \mathbf{x}, \mathbf{y}\mid U=V\rangle$ with $|\mathbf{x}|, |\mathbf{y}|\geq1$ then we must have $|\mathbf{x}|=1=|\mathbf{y}|$, and so $G$ must then admit a presentation $\mathcal{P}:=\langle x, y\mid x^m=y^n\rangle$.
%Any group $K$ with a presentation of the form $\mathcal{P}$ has a free normal subgroup $F$ (not necessarily finitely generated) such that $K/F\cong\mathbb{Z}$ \cite[Theorem 2(1)]{Baumslag2009Virtual}.
By applying Brown's algorithm, we see that $G$ is free-by-cyclic \cite[Sections 3 \& 4]{Brown1987trees} (see also \cite[Theorem 2(1)]{Baumslag2009Virtual}).
However, by Lemma \ref{lem:hyperbolic}, $G$ contains a subgroup $H$ isomorphic to $\BS(1, 2)$, and the derived subgroup of $H$ is a non-finitely generated abelian group, and so $G$ contains no free normal subgroup $F$ with $G/F$ abelian.
Hence, $G$ is not a cyclically pinched one-relator group.
\end{proof}

A group is a \emph{conjugacy pinched one-relator group} if it admits a decomposition as an HNN-extension of a free group with cyclic associated subgroups.
Such a decomposition corresponds to a presentation $\langle \mathbf{x}, y\mid y^{-1}Uy=V\rangle$ with $U, V\in F(\mathbf{x})$ non-empty.
Conjugacy pinched groups have decidable Conjugacy Problem.

\begin{lemma}
$G$ is not a conjugacy pinched one-relator group.
\end{lemma}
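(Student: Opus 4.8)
The plan is to show that any conjugacy pinched presentation of $G$ would present a Baumslag--Solitar group, and then to exclude this using that $G$ is residually finite \cite[Theorem 4]{Drutu2005Nonlinear} while containing a non-abelian free subgroup. First I would record that $H_1(G)\cong\mathbb{Z}$: abelianising the relator $t^2at^{-2}a^{-2}$ makes every occurrence of $t$ cancel and leaves $-a$, so $H_1(G)$ is infinite cyclic, generated by the image of $t$. Now suppose $G$ admitted a conjugacy pinched presentation $\langle\mathbf{x},y\mid y^{-1}Uy=V\rangle$ with $U,V\in F(\mathbf{x})$ non-empty. As $G$ is finitely generated I may assume $\mathbf{x}$ is finite, and since a non-empty reduced word needs a non-empty alphabet, $|\mathbf{x}|\geq1$. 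The exponent sum of $y$ in the relator is $0$, so abelianising the presentation yields
\[
H_1(G)\;\cong\;\bigl(\mathbb{Z}^{|\mathbf{x}|}/\langle[U]-[V]\rangle\bigr)\oplus\mathbb{Z},
\]
with $[U]-[V]\in\mathbb{Z}^{|\mathbf{x}|}$ and the second summand contributed by $y$. Since $H_1(G)\cong\mathbb{Z}$ is torsion-free of rank one, the first summand must be trivial; as a cyclic subgroup cannot be all of $\mathbb{Z}^{|\mathbf{x}|}$ when $|\mathbf{x}|\geq2$, this forces $|\mathbf{x}|=1$, and then, writing $U=x^a$ and $V=x^b$ with $a,b\neq0$, triviality of $\mathbb{Z}/\langle a-b\rangle$ forces $|a-b|=1$. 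Hence $G\cong\BS(a,b)$ with $a-b=\pm1$.

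It remains to derive a contradiction. As $a-b=\pm1$ with $a,b$ non-zero, $a$ and $b$ share a sign, so using the isomorphisms $\BS(a,b)\cong\BS(-a,-b)\cong\BS(b,a)$ I may assume $G\cong\BS(m,m+1)$ for some $m\geq1$. If $m\geq2$ then the two exponents are distinct and each has absolute value at least two, so $\BS(m,m+1)$ is non-Hopfian (the classical Baumslag--Solitar phenomenon, exemplified by $\BS(2, 3)$) and, being finitely generated, is therefore not residually finite, contradicting \cite[Theorem 4]{Drutu2005Nonlinear}. If $m=1$ then $G\cong\BS(1,2)\cong\mathbb{Z}[1/2]\rtimes\mathbb{Z}$, which is metabelian; but $G$ contains the base group $F(a,b)\cong F_2$ of its ascending HNN-extension structure, and a free group of rank two is not metabelian, so $G$ is not metabelian. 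Both cases are absurd, and so $G$ admits no conjugacy pinched presentation.

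The crux of the argument is the first step. In the cyclically pinched case the generator count of \cite[II.5.11]{LyndonSchupp2001} bounds the number of free generators, but it does not do so here; the homological input $H_1(G)\cong\mathbb{Z}$ is what collapses $|\mathbf{x}|$ to $1$, after which only well-understood Baumslag--Solitar groups remain to be excluded.
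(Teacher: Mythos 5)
Your proof is correct, and it takes a genuinely different route from the paper at both steps. To force $|\mathbf{x}|=1$ the paper invokes the theorem that a non-free one-relator group on $n$ generators cannot be generated by fewer than $n$ elements (Lyndon--Schupp II.5.11), using that $G$ is two-generated; you instead abelianise, using $H_1(G)\cong\mathbb{Z}$, which is more elementary and even yields the extra constraint $|a-b|=1$, so only the groups $\BS(m,m+1)$ survive. To kill the Baumslag--Solitar possibilities the paper uses a single uniform citation (a Baumslag--Solitar group is linear if and only if it is residually finite, due to Meskin and Volvachev) against Dru\c{t}u--Sapir's ``residually finite but not linear''; you instead split into cases, excluding $m\geq2$ via non-Hopficity of $\BS(m,m+1)$ (Mal'cev then contradicts residual finiteness) and $m=1$ by noting $\BS(1,2)$ is metabelian while $G$ contains the free base group $F(a,b)$ of its ascending HNN-extension structure --- a fact the paper does establish, so this is legitimate, though you could equally have disposed of $\BS(1,2)$ by linearity using the same Dru\c{t}u--Sapir input. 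Your first step buys independence from the one-relator minimal-generation theorem (and would even work if $\mathbf{x}$ were infinite, since a free abelian group of rank at least two is never killed by one relation, so the finiteness remark you make is not actually needed); the paper's second step buys a case-free conclusion that reuses the non-linearity hypothesis it already needs. All the individual claims you make (the abelianisation computations, $\BS(a,b)\cong\BS(-a,-b)\cong\BS(b,a)$, non-Hopficity of $\BS(m,m+1)$ for $m\geq2$ since $m$ and $m+1$ have disjoint prime divisors, and injectivity of $a\mapsto b$, $b\mapsto a^2$ so that $F(a,b)$ embeds in $G$) check out.
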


\begin{proof}
A non-free $n$-generator one-relator group cannot be generated by fewer than $n$-elements \cite[II.5.11]{LyndonSchupp2001}.
Therefore, as $G$ is two-generated, if it admits a presentation $\langle \mathbf{x}, y\mid y^{-1}Uy=V\rangle$ with $U, V \in F(\mathbf{x})$ then we must have $|\mathbf{x}|=1$, and so $G$ must then admit a presentation $\mathcal{P}:=\langle x, y\mid y^{-1}x^my=x^n\rangle$.
This is a presentation of the Baumslag--Solitar group $\BS(m, n)$.
However, a Baumslag--Solitar group is linear if and only if it is residually finite \cite{Meskin1972nonresidually} \cite{Volvachev1985linear}, but $G$ is residually finite and non-linear.
Hence, $G$ is not a conjugacy-pinched one-relator group.
\end{proof}

\p{Pride's unique max-min condition}
For a word $R\in F(\mathbf{x})$, write $R^{(i)}$ for the prefix of $R$ of length $i$.
A one-relator presentation $\mathcal{P}=\langle \mathbf{x}\mid R\rangle$ satisfies \emph{Pride's unique max-min condition} if there exists a homomorphism $\rho:F(\mathbf{x})\onto\mathbb{Z}$ satisfying the following:
\begin{enumerate}
\item $R\in\ker\rho$.
\item $\mathbf{x}\cap\ker\rho=\emptyset$.
\item There exists $0<i, j\leq |R|$ such that for all $0<k\leq |R|$ with $i\neq k\neq j$ we have $\rho(R^{(i)})>\rho(R^{(k)})>\rho(R^{(j)})$.
\end{enumerate}
The group defined by such a presentation $\mathcal{P}$ has decidable Conjugacy Problem.

\begin{lemma}
No one-relator presentation for $G$ satisfies Pride's unique max-min condition.
\end{lemma}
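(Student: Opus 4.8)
The plan is to follow exactly the pattern of the two previous lemmas: first reduce to a two-generator presentation, then exhibit a structural consequence of Pride's condition that $G$ provably fails. Since $G$ is non-free and two-generated, the fact that a non-free $n$-generator one-relator group cannot be generated by fewer than $n$ elements \cite[II.5.11]{LyndonSchupp2001} shows that any one-relator presentation of $G$ has the form $\langle x, y\mid R\rangle$ with $R\in F(x,y)$. So I would suppose such a presentation satisfies Pride's unique max-min condition, witnessed by a surjection $\rho\colon F(x,y)\onto\mathbb{Z}$, and work towards a contradiction.

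Next I would recall that conditions (1) and (2) say precisely that $\rho$ descends to a surjection $\overline{\rho}\colon G\onto\mathbb{Z}$ not killing either $x$ or $y$, and that, with these conditions in force, condition (3) — the uniqueness of \emph{both} the maximal and the minimal prefix of $R$ under the $\rho$-grading — is the combinatorial criterion of Brown \cite[Section 4]{Brown1987trees} detecting that $G$ is \emph{fibered} in the direction of $\overline{\rho}$; that is, $\ker\overline{\rho}$ is a finitely generated free group and $G$ is (finitely generated free)-by-cyclic. Equivalently, conditions (1), (2) let one rewrite $R$ over a generating set for $\ker\overline{\rho}$ via the Magnus--Karrass--Solitar process, and condition (3) bounds the indices of the occurring stable-letter conjugates from both sides, forcing the associated Magnus subgroup to be all of $\ker\overline{\rho}$ and to be finitely generated.

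I would then observe that this is impossible here. Abelianising the presentation $\langle a,t\mid t^{2}at^{-2}a^{-2}\rangle$ gives $G^{\mathrm{ab}}\cong\mathbb{Z}$, so there is, up to sign, only one surjection $G\onto\mathbb{Z}$, and $\overline{\rho}$ must be it. Using instead the ascending HNN presentation $G\cong\langle a,b,t\mid tat^{-1}=b,\ tbt^{-1}=a^{2}\rangle$, the kernel of this surjection is the normal closure of $\{a,b\}$, namely the fibre $\bigcup_{i\geq 0}t^{i}F(a,b)t^{-i}$ of the ascending HNN structure. This union is strictly increasing — so $\ker\overline{\rho}$ is infinitely generated — because $\phi\colon a\mapsto b,\ b\mapsto a^{2}$ is not surjective: its image $\langle a^{2},b\rangle$ is proper in $F(a,b)$, as seen from the homomorphism $F(a,b)\to\mathbb{Z}/2\mathbb{Z}$ with $a\mapsto 1$, $b\mapsto 0$. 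This contradicts the previous paragraph, so no one-relator presentation of $G$ satisfies Pride's condition.

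The main obstacle is the middle step: extracting from Pride's conditions (1)--(3) that $\ker\overline{\rho}$ is finitely generated. I would handle this by matching (1) and (2) with the hypotheses of Brown's criterion and the two-sided uniqueness in (3) with its fibered conclusion; it is essential that both extremes are constrained, since the one-sided version of the criterion yields only an ascending HNN decomposition over a finitely generated free group, whose fibre may well be infinitely generated (as is the case for $G$ itself). An alternative, more self-contained route would bypass Brown entirely and run the Magnus--Karrass--Solitar rewriting of $R$ directly, using condition (3) to bound from both sides the indices of the stable-letter conjugates appearing in the rewritten relator, and concluding finite generation of $\ker\overline{\rho}$ from that bound.
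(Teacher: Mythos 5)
Your argument is correct, but it takes a genuinely different route from the paper. You derive from Pride's conditions (via Brown's computation of the BNS invariant of a one-relator group, or directly via Magnus--Karrass--Solitar rewriting with the two-sided bound on stable-letter indices) that $\ker\overline{\rho}$ is finitely generated, and then contradict this using the facts that $G^{\mathrm{ab}}\cong\mathbb{Z}$ (so $\overline{\rho}$ is the unique character up to sign) and that its kernel is the strictly ascending union of conjugates of $F(a,b)$, strictly ascending because $\phi\colon a\mapsto b,\ b\mapsto a^2$ is not surjective. The paper argues differently and more briefly: it quotes Pride's own result that a group with a presentation satisfying the unique max-min condition \emph{embeds} into a free-by-cyclic group $F\rtimes\mathbb{Z}$, observes that in a free-by-cyclic group no element is conjugate to a proper power of itself (elements of free groups have only finitely many roots), and notes that in $G$ the element $a$ is conjugate to $a^2$. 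The paper's route avoids BNS machinery entirely and does not need the computation of $G^{\mathrm{ab}}$ or the uniqueness of the character; your route avoids invoking Pride's embedding theorem and instead isolates exactly which hypothesis fails (the two-sided uniqueness forcing finite generation of the kernel), at the cost of the bookkeeping you flag yourself: matching Pride's prefix condition with Brown's cyclically-interpreted walk criterion (cyclic reduction of $R$, behaviour at the basepoint) needs care, though since Pride demands \emph{both} extrema be unique, both $[\overline{\rho}]$ and $[-\overline{\rho}]$ lie in $\Sigma^1(G)$ and finite generation of the kernel follows; note also that finite generation alone suffices for your contradiction, so you do not need the stronger claim that the kernel is free, and a small slip in the direction of conjugation in the ascending union ($tF(a,b)t^{-1}\subseteq F(a,b)$ with the Section 6 convention, so the fibre is $\bigcup_{i\geq0}t^{-i}F(a,b)t^{i}$) does not affect the conclusion.
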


\begin{proof}
We start by considering free-by-cyclic groups.
In such a group, no element is conjugate to a proper power of itself, which is a straightforward exercise applying the fact that elements of free groups have only finitely many roots.

In the group $G$, the element $a\in G$ is conjugate to a proper power of itself, and so $G$ does not embed into any free-by-cyclic group.
On the other hand, if a one-relator presentation $\mathcal{P}$ satisfies Pride's unique max-min condition, then the group defined by $\mathcal{P}$ embeds into a free-by-cyclic group $F\rtimes\mathbb{Z}$ \cite[Section 5]{Pride2008residual}.
The result now follows.
\end{proof}

\p{Conjugacy Separability}
We finish this section by discussing conjugacy separability.
A group $H$ is \emph{conjugacy separable} if for every pair of non-conjugate elements $g, h\in H$ there exists a homomorphism $\psi: H\to K$ with $K$ finite and such that $\psi(g)$ and $\psi(h)$ are non-conjugate in $K$.
Finitely presented conjugacy separable groups have decidable Conjugacy Problem, and are residually finite.
Therefore, proving conjugacy separability is a standard, although generally difficult, route to proving the decidability of the Conjugacy Problem for finitely presented residually finite groups.
Indeed, it is an open problem whether every free-by-cyclic group is conjugacy separable \cite[Problem 19.41]{khukhro2014unsolved}.

So far as we know, the conjugacy separability of the Dru\c{t}u--Sapir group $G$ is open.
Indeed, we could try and apply the standard result of Ribes, Segal and Zalesskii \cite{Ribes1998ConjugacySeparability}.
To apply this result, our group $G$ must decompose as a free product of conjugacy separable groups with cyclic associated subgroup; this is indeed the case here, with $G\cong\langle a, s, t\mid sas^{-1}=a^2, s=t^2\rangle$ factoring as a free product with amalgamation of $\operatorname{BS}(1, 2)=\langle a, s\rangle$ and $\mathbb{Z}=\langle t\rangle$ over $s=t^2$, as observed in the proof of Lemma \ref{lem:hyperbolic}, and furthermore $\operatorname{BS}(1, 2)$ is conjugacy separable \cite[Theorem 10]{Moldavanskii2018Residual}.
Ribes--Segal--Zalesskii then require both factor groups to satisfy certain properties regarding their profinite completions, including being ``cyclic subgroup separable''. Although $\mathbb{Z}$ easily satisfies these properties, $\operatorname{BS}(1, 2)$ is not cyclic subgroup separable; a quick proof is given in \cite[Section 4]{Moldavanskii2018Residual}, while \cite[Theorem 1.1]{Lopez2025Separability} gives the complete picture of cyclic subgroup separability for GBS groups.

\bibliographystyle{amsalpha}
\bibliography{BibTexBibliography}
\end{document}